\documentclass[final,3p]{elsarticle}

\usepackage{lineno}
\usepackage[colorlinks=true,breaklinks=true,pdftex]{hyperref}
\modulolinenumbers[1]

\usepackage{fleqn}
\usepackage{amsmath}
\usepackage{amsthm}
\usepackage{amsfonts}
\usepackage{amssymb}
\usepackage{epsfig}
\usepackage{makeidx}
\usepackage{relsize}
\usepackage[export]{adjustbox}
\usepackage{graphics}
\usepackage{epstopdf}
\usepackage{subfigure}
\epstopdfsetup{suffix=}
\usepackage{graphicx}
\usepackage{xcolor}
\usepackage[normalem]{ulem}
\usepackage{algorithm}
\usepackage{algpseudocode}
\usepackage{enumerate}
\usepackage{dsfont}
\usepackage{multirow}
\usepackage[title]{appendix}

\usepackage{tabularx}
\usepackage{ragged2e}
\newcolumntype{M}[1]{>{\centering\arraybackslash}m{#1}}
\usepackage{tikz}
\usetikzlibrary{plotmarks}
\usetikzlibrary{arrows}
\usetikzlibrary{shapes}


\newcommand{\cC}{\mathcal{C}}
\newcommand{\cF}{\mathcal{F}}
\newcommand{\cS}{\mathcal{S}}

\newcommand{\R}{\mathbb{R}}
\newcommand{\N}{\mathbb{N}}
\newcommand{\Z}{\mathbb{Z}}

\newcommand{\lra}{\longrightarrow}

\newcommand{\lZn}{\ell(\Z,\R^n)}
\newcommand{\liZn}{\ell_\infty(\Z,\R^n)}

\DeclareMathOperator*{\Span}{span}

\newcommand{\espan}[1]{\Span\left\{#1\right\}}



\theoremstyle{remark}
\newtheorem{thm}{Theorem}
\newtheorem{lem}[thm]{Lemma}
\newtheorem{teo}[thm]{Theorem}
\newtheorem{cor}[thm]{Corolary}
\newtheorem{prop}[thm]{Proposition}

\theoremstyle{remark}
\newtheorem{defi}[thm]{Definition}
\newtheorem{rmk}[thm]{Remark}

\journal{arXiv}

\bibliographystyle{model2-names}\biboptions{authoryear}

\begin{document}

\begin{frontmatter}
    
    \title{A uniform non-linear subdivision scheme \\ reproducing polynomials at any non-uniform grid}
    
    
    \author{Sergio L\'opez-Ure\~na\corref{cor}}
    \ead{sergio.lopez-urena@uv.es}
    \cortext[cor]{Corresponding author}
    \address{Dept. de Matem\`atiques, Universitat de Val\`encia, Doctor Moliner Street 50, 46100 Burjassot, Valencia, Spain.}
    
    %
    %
    \begin{abstract}
        In this paper, we introduce a novel non-linear uniform subdivision scheme for the generation of curves in $\R^n$, $n\geq2$. This scheme is distinguished by its capacity to reproduce second-degree polynomial data on non-uniform grids without necessitating prior knowledge of the grid specificities. Our approach exploits the potential of annihilation operators to infer the underlying grid, thereby obviating the need for end-users to specify such information. We define the scheme in a non-stationary manner, ensuring that it progressively approaches a classical linear scheme as the iteration number increases, all while preserving its polynomial reproduction capability.
		
		The convergence is established through two distinct theoretical methods. Firstly, we propose a new class of schemes, including ours, for which we establish $\mathcal{C}^1$ convergence by combining results from the analysis of quasilinear schemes and asymptotically equivalent linear non-uniform non-stationary schemes.
		Secondly, we adapt conventional analytical tools for non-linear schemes to the non-stationary case, allowing us to again conclude the convergence of the proposed class of schemes.
		
		We show its practical usefulness through numerical examples, showing that the generated curves are curvature continuous.
    \end{abstract}
    
    \begin{keyword}
        Uniform non-stationary subdivision scheme, polynomial reproduction, non-uniform grid, annihilation operators, asymptotically equivalent schemes, quasilinear schemes.
    \end{keyword}
    
\end{frontmatter}


\section{Introduction}

Recursive subdivision is a well-established technique for creating smooth geometric models from discrete initial data. It has found extensive application in Computer-Aided Design (CAD) thanks to its exceptional performance and straightforward implementation (e.g., see \cite{Dyn92,Ma05}).

In this paper, our focus is on generating curves in $\R^n$, $n\geq2$. To achieve this, we will employ the following type of subdivision schemes.

\begin{defi}[Vector-valued subdivision scheme] \label{defi:vector_subdivision}
	For a given initial data $f^0\in\lZn$, so that $f^0_i\in\R^n$ $\forall i\in\Z$, more data is generated by the recursive formula $f^{k+1} = S_kf^k$, $k\in\N_0 := \N \cup \{0\}$, where $S_k:\lZn\lra \lZn$ is a \emph{subdivision operator} defined as
\[
(S_{k}f)_{2i} = \Psi_{2i}^k(f_{i-p},\ldots,f_{i+p+1}), \quad (S_{k}f)_{2i+1} = \Psi_{2i+1}^k(f_{i-p},\ldots,f_{i+p+1}), \qquad i\in\Z, \quad f\in\lZn,
\]
for some $p\in\N_0$ and some \(\Psi^k_i : (\R^n)^{2p+2}\lra\R^n\), $i\in\Z$. The \emph{subdivision scheme} is identified as the sequence of subdivision operators \(\cS = \{S_k\}_{k\in\N_0}\).
The scheme is \emph{linear} iff each $\Psi^k_i$ is a linear function. The scheme is \emph{uniform} iff $\Psi^k_{2i} = \Psi^k_0$ and $\Psi^k_{2i+1} = \Psi^k_1$, $\forall i\in\Z$, $\forall k\in\N_0$. The scheme is \emph{stationary} iff $\Psi^k_i = \Psi^0_i$, $\forall i\in\Z$, $\forall k\in\N_0$.
We say that the scheme is \emph{based on a scalar-valued subdivision scheme} (or simply \emph{scalar-based}) iff there exists $\psi^k_i : \R^{2p+2}\lra\R$ such that
\[
	\Psi^k_i(f_{i-p},\ldots,f_{i+p+1}) = (\psi^k_i((f_{i-p})_j,\ldots,(f_{i+p+1})_j))_{j=1}^n, \qquad \forall f_{i-p},\ldots,f_{i+p+1}\in\R^n, \ \forall i\in\Z, \ \forall k\in\N_0.
\]
\end{defi}

The properties of linear uniform subdivision schemes have been extensively studied, including non-stationary schemes, which are valuable for achieving exact geometric representations of conic sections through exponential polynomial reproduction (see \cite{DLL03,CR11}).

Non-uniform linear schemes have also been the subject of previous research (as in \cite{DLY14}). However, their practical application in CAD may be limited, necessitating knowledge of the underlying non-uniform grid or requiring end-users to adjust certain shape parameters along the grid (e.g., see \cite{KV02}). To address this issue, some scholars have suggested estimating these parameters from existing data, either at the initial step (as in \cite{BCR11}) or at each step (as in \cite{JYY21,DFH09}), with the latter approach leading to uniform non-linear schemes. A similar strategy has been employed in this work: we consider a family of non-uniform linear schemes and replace the grid-dependent parameters with non-linear functions that solely dependent on the data, leading to a uniform non-linear scheme.

In this paper, we consider subdivision schemes as general as in Definition \ref{defi:vector_subdivision}. In the absence of some the adjectives ``linear'', ``uniform'', ``stationary'' and ``scalar-based'', we do not assume them.

In the following, we introduce some notation and basic definitions. For any $f\in\lZn$, we denote $\nabla f := (\nabla f_{i})_{i\in\Z} \in \lZn$, $\nabla f_{i} := f_{i+1} - f_{i} \in \R^n$. For bounded sequences, we consider
\[
\|f\|_\infty := \sup_{i\in\Z} |f_i|, \quad f\in\liZn,
\]
for some fixed $|\cdot|$ norm in $\R^n$.
Abusing the notation, we write $\langle a,b \rangle := \sum_{i} a_i b_i \in\R^m$, for $a$ and $b$ having the same length, $a_i\in\R$, $b_i\in\R^m$, with $m\in\N$.

\begin{defi}[Convergence]
We say that a sequence $\{f^{k}\}_{k\in\N_0}\subset \lZn$ \emph{converges uniformly} to a continuous function $F: \R \lra \R^n$ iff for any compact interval $I\subset\R$,
\begin{equation}
\lim_{k\to+\infty} \sup_{i \in \Z \cap 2^k I} |f^k_{i} - F(i 2^{-k})|=0.
\end{equation}
A subdivision scheme $\cS$ is \emph{uniformly convergent} iff, for any $f^0\in \lZn$, the sequence $\{f^{k}\}_{k\in\N_0}$ defined recursively by $f^{k+1} = S_{k}f^{k}$ converges uniformly to a continuous function, denoted by $S^\infty f^0$, and $S^\infty \neq 0$. A scheme is $\cC^\mu$, $\mu\in(0,+\infty)$, iff it is convergent and $S^\infty f^0 \in \cC^\mu(\R,\R^n)$, $\forall f^0\in \lZn$ (Hölder regularity is considered here). A scheme is $\cC^{\mu-}$ iff it is $\cC^{\mu-\epsilon}$ for any $\epsilon\in(0,\mu)$.
\end{defi}

The concept of reproduction refers to the ability of a subdivision scheme to exactly generate a specific family of functions, typically polynomials. It has been thoroughly examined, but to the best of our knowledge, it has been barely addressed in the context of non-uniform grids (see \cite{Levin03}). Consequently, we propose the following reproduction definition, which is compatible with non-uniform grids.

\begin{defi}[Reproduction]
For each $k\in\N_0$, let $\xi^k = (\xi^k_i)_{i\in\Z} \subset \R$ be a grid, so that $\xi^k_i < \xi^k_{i+1}$.
Let $\cF\subset\cC(\R,\R^n)$ be a set of functions. We say that a subdivision operator $S_{k}$ \emph{reproduces} $\cF$ for $(\xi^{k},\xi^{k+1})$ iff
\[
S_k F|_{\xi^k} = F|_{\xi^{k+1}}, \qquad \forall F\in\cF,
\]
where $F|_{\xi^k} = (F(\xi^k_i))_{i\in\Z}$.
If this holds true for every $k\in\N_0$, we say that the subdivision scheme $\cS$ \emph{stepwise reproduces} $\cF$ for $\{\xi^{k}\}_{k\in\N_0}$.

We say that $\cS$ \emph{reproduces} $\cF$ on $\xi^0$ iff there exists some parametrization of $\R$, say $\sigma:\R\lra\R$ strictly increasing, such that the subdivision process converges to $F\circ\sigma$ when the input data is $f^{0} = F|_{\xi^0}$, $\forall F\in\cF$.
\end{defi}

The typical definition of reproduction does not consider a parametrization $\sigma$. This is because it is provided for uniform schemes, where $\sigma$ is implicitly selected as the identity function and $\xi^0$ is a uniform (i.e., equispaced) grid.

\begin{prop} \label{prop:step_implies_limit}
Let $\cF\subset\cC(\R,\R^{n})$ be and let $\{\xi^{k}\}_{k\in\N_0}$ be a uniformly convergent sequence. 
If $\cS$ stepwise reproduces $\cF$ for $\{\xi^{k}\}_{k\in\N_0}$, then $\cS$ reproduces $\cF$ on $\xi^{0}$.

In addition, $S^\infty (F|_{\xi^0}) = F\circ \sigma$, where $\sigma:\R\lra\R$ is the limit function of $\{\xi^{k}\}_{k\in\N_0}$.
\end{prop}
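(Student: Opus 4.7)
The plan is to reduce the statement to a pointwise comparison between $F|_{\xi^k}$ and $F\circ\sigma$ restricted to the dyadic grid, and then invoke uniform continuity of $F$ on a suitably chosen compact set.

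First, using the stepwise reproduction hypothesis and induction on $k$, I would identify the iterates exactly: starting from $f^0 = F|_{\xi^0}$ one has $f^{k+1} = S_k(F|_{\xi^k}) = F|_{\xi^{k+1}}$, so $f^k = F|_{\xi^k}$ for every $k\in\N_0$ and every $F\in\cF$. The convergence claim thereby becomes: $F|_{\xi^k}$ converges uniformly, in the subdivision sense, to $F\circ\sigma$, where $\sigma$ is the continuous limit of $\{\xi^k\}$. Since $\sigma$ and $F$ are continuous, so is $F\circ\sigma$, hence it is a legitimate candidate limit function.

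I would then fix a compact interval $I\subset\R$ and $\epsilon>0$. Continuity of $\sigma$ makes $\sigma(I)$ compact, and by uniform convergence of $\{\xi^k\}$ to $\sigma$ on $I$, all $\xi^k_i$ with $i\in\Z\cap 2^k I$ lie within distance $1$ of $\sigma(I)$ once $k$ exceeds some $K_1$. Together with the finitely many $\xi^k_i$ arising from $k<K_1$, this confines every relevant $\xi^k_i$ and $\sigma(i\,2^{-k})$ to a common compact interval $J\subset\R$. Invoking uniform continuity of $F$ on $J$, I would pick $\delta>0$ with $|F(x)-F(y)|<\epsilon$ whenever $x,y\in J$ and $|x-y|<\delta$, and then $K\geq K_1$ such that $|\xi^k_i-\sigma(i\,2^{-k})|<\delta$ uniformly over $i\in\Z\cap 2^k I$ for all $k\geq K$. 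For such $k$,
\[
\sup_{i\in\Z\cap 2^k I} |f^k_i - (F\circ\sigma)(i\,2^{-k})| \;=\; \sup_{i\in\Z\cap 2^k I} |F(\xi^k_i) - F(\sigma(i\,2^{-k}))| \;<\; \epsilon,
\]
which is the required uniform convergence. This delivers $S^\infty(F|_{\xi^0}) = F\circ\sigma$ for every $F\in\cF$, and taking $\sigma$ itself as the parametrization yields reproduction of $\cF$ on $\xi^0$.

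The main — and essentially only — obstacle is verifying that all the grid values $\xi^k_i$ at which $F$ is evaluated lie in one common compact set, so that a single modulus of continuity of $F$ handles every $k$ simultaneously. Once this boundedness is secured from the uniform convergence of $\{\xi^k\}$ together with the compactness of $\sigma(I)$, the rest is a routine $\epsilon$-$\delta$ transfer of the closeness of $\xi^k_i$ to $\sigma(i\,2^{-k})$ through $F$.
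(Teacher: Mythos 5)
Your proof is correct and follows essentially the same route as the paper's: identify $f^k = F|_{\xi^k}$ by induction from stepwise reproduction, reduce to $|F(\xi^k_i)-F(\sigma(i2^{-k}))|$, and transfer the uniform convergence of $\xi^k$ to $\sigma$ through the uniform continuity of $F$. If anything, your explicit construction of a common compact set $J$ containing all evaluation points is slightly more careful than the paper, which invokes uniform continuity of $F$ ``on $I$'' even though the points $\xi^k_i$ and $\sigma(i2^{-k})$ live near $\sigma(I)$ rather than in $I$.
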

\begin{proof}
Consider $I\subset\R$ an arbitrary compact interval and $F\in\cF$ an arbitrary function of the space being reproduced.
Given the stepwise reproduction, we know that starting from any $f^{0} = F|_{\xi^0}$ the generated iterates are $f^{k} = F|_{\xi^k}$. Thus,
\[
|f^k_i - (F\circ \sigma)(i 2^{-k})| =
|F(\xi^{k}_{i})- F(\sigma(i 2^{-k}))|.
\]
Given $\epsilon>0$, since $F$ is uniformly continuous on $I$, there exists $\delta>0$ such that
\[
|x-y|<\delta \quad \lra \quad |F(x)-F(y)|<\epsilon.
\]
Since $\{\xi^{k}\}_{k\in\N_0}$ converges uniformly to $\sigma$, there exists $k_{0}\in\N_{0}$ such that
\[
|\xi^k_i - \sigma(i 2^{-k})| < \delta, \qquad \forall i\in \Z \cap 2^{k}I, \quad \forall k\geq k_{0}.
\]
Combining these results, we get
\[
|f^k_i - (F\circ \sigma)(i 2^{-k})|<\epsilon, \quad  \forall i\in \Z \cap 2^{k}I, \quad \forall k\geq k_{0}.
\]
Hence, $\{f^{k}\}_{k\in\N_0}$ converges uniformly to $F\circ \sigma$ on $I$. Since $I$ and $F$ were arbitrarily chosen, we conclude that $\cS$ reproduces $\cF$ on $\xi^{0}$.
\end{proof}

Thanks to \cite{DLL03,CR11}, the algebraic conditions for a linear uniform scheme to reproduce exponential polynomials are well-established. For instance, to reproduce a circle, the number of sampling points is required to define a reproducing linear non-stationary scheme. In the linear case, the consideration of non-stationary schemes is mandatory to reproduce exponential polynomials.

In later studies (see \cite{DL19,LV21}), we investigated the possibility of defining a single non-linear but stationary scheme with the capability to simultaneously reproduce multiple exponential polynomial spaces. In the previous example of a circle, this approach eliminates the need to know the number of sampling points. This type of schemes was accomplished by leveraging the existence of annihilation operators for exponential polynomials (see \cite{DLL03,CLR22}).

In this work, we harness annihilation operators to develop a non-linear but uniform subdivision scheme with the ability to reproduce second-degree polynomials even if they are sampled on non-uniform grids. This scheme can handle many non-uniform grids without the need for prior knowledge of those grids. No preprocessing steps are performed. This property is precisely stated in Theorem \ref{teo:scheme_reproduction}.

The capability to represent parabolas in CAD applications, afforded by second-degree polynomials reproduction, holds little significance.
This research pretends to mark an initial step towards achieving the reproduction of exponential polynomials (and in particular, conic sections) on non-uniform grids, even when the grid information is not known in advance.
Such reproduction capabilities can be particularly valuable in practical scenarios where users provide a set of points, here denoted as $f^0$, representing a common curve like a conic section, without specifying the underlying parametrization or grid spacings. Consequently, these curves could be reproduced on a wide range of non-uniform grids without burdening the user with providing such information.

More in detail, we deduce and study a subdivision scheme that can reproduce the set of functions whose coordinates are second-degree polynomials,
\[ \Pi_2^n := \{ F=(F_1,\ldots,F_n):\R\lra\R^n, \quad F_i \in \Pi_2, \quad \forall i=1,\ldots,n\}.\]

\begin{rmk}
	From now on, let us consider a sequence of grids $\{\xi^k\}_{k\in\N_0}$, so that $\xi^k_i < \xi^k_{i+1}$, $\forall i\in\Z$, $\forall k\in\N_0$. We measure the uniformity of the grid $\xi^k$ by the following quantities:
\[
\alpha_i^k := \frac{\nabla \xi^k_{i-1}}{\nabla \xi^k_i}, \quad \beta_{i}^{k}:=\frac{\nabla \xi^k_{i+1}}{\nabla \xi^k_i}, \qquad i\in\Z.
\]
Observe that $\alpha_i^k,\beta_i^k\in(0,+\infty)$, $\beta_i^k = 1/\alpha_{i+1}^k$, and that a uniform grid is characterized by $\alpha_i^k=1$, $\forall i\in\Z$.
\end{rmk}

Second-degree polynomials satisfy the following annihilation property.
\begin{prop}[Annihilation property] \label{prop:annihilation}
	For any grid $\xi^k$ and for all $i\in\Z$, $F\in\Pi_2^n$, $n\geq 1$,
\begin{equation} \label{prop:eq:annihilation}
	A_i^k (F(\xi^k_{i}) - F(\xi^k_{i-1})) - (F(\xi^k_{i+1})-F(\xi^k_i)) + B_i^k (F(\xi^k_{i+2})-F(\xi^k_{i+1})) = 0\in\R^n,
\end{equation}
where
\begin{equation}
\label{eq:alpha_beta_ik}
A_i^k:=\frac{\beta_i^k+1}{\alpha_i^k (\alpha_i^k+\beta_i^k+2)}, \quad B_i^k:=\frac{\alpha_i^k+1}{\beta_i^k (\alpha_i^k+\beta_i^k+2)}.
\end{equation}
\end{prop}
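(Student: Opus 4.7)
The plan is to reduce to the scalar case $n=1$ with $F\in\Pi_2$, since the left-hand side of \eqref{prop:eq:annihilation} is linear in $F$ and acts coordinate-wise on the vector components. Once reduced, any argument that produces the identity for a single scalar quadratic actually handles the whole claim.

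For the scalar case I would package everything through Newton's second divided differences. For any $F\in\Pi_2$ the second divided difference on three distinct nodes is constant (equal to the leading coefficient of $F$), so in particular
\[
F[\xi^k_{i-1},\xi^k_i,\xi^k_{i+1}] = F[\xi^k_i,\xi^k_{i+1},\xi^k_{i+2}],
\]
a single linear relation among $F(\xi^k_{i-1}),F(\xi^k_i),F(\xi^k_{i+1}),F(\xi^k_{i+2})$ that holds for every quadratic. I would expand each side via the recursive rule $F[\xi^k_j,\xi^k_{j+1}] = (F(\xi^k_{j+1})-F(\xi^k_j))/\nabla\xi^k_j$ together with $\xi^k_{i+1}-\xi^k_{i-1}=(\alpha_i^k+1)\nabla\xi^k_i$ and $\xi^k_{i+2}-\xi^k_i=(\beta_i^k+1)\nabla\xi^k_i$, then clear denominators using $\nabla\xi^k_{i-1}=\alpha_i^k\nabla\xi^k_i$ and $\nabla\xi^k_{i+1}=\beta_i^k\nabla\xi^k_i$, and finally rescale so that the middle first difference $F(\xi^k_{i+1})-F(\xi^k_i)$ has coefficient $-1$. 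The outer coefficients that emerge simplify to exactly $A_i^k$ and $B_i^k$ in \eqref{eq:alpha_beta_ik}, yielding \eqref{prop:eq:annihilation}.

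Checking this one normalised identity is enough, without testing a basis of $\Pi_2$ separately: on four distinct nodes the evaluation map $\Pi_2\to\R^4$ has three-dimensional image, so the space of linear relations among $\{F(\xi^k_j)\}_{j=i-1}^{i+2}$ valid on all of $\Pi_2$ is one-dimensional, and the divided-difference identity above furnishes a nonzero element of it; any other such relation is a scalar multiple, and normalising the middle coefficient pins down the scale. The only non-formal step is the short algebraic manipulation in the previous paragraph; keeping everything in terms of $\alpha_i^k,\beta_i^k,\nabla\xi^k_i$ makes the bookkeeping transparent, and I do not anticipate any obstacle beyond careful arithmetic.
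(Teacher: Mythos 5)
Your proposal is correct, and it takes a genuinely different route from the paper. The paper's proof reduces to $n=1$ by linearity and the tensor-product structure of $\Pi_2^n$ (the same reduction you make), but then simply \emph{verifies} the identity on the monomial basis $\{1,x,x^2\}$ by symbolic computation, delegating the algebra to a Mathematica notebook. You instead \emph{derive} the identity: writing $d_{-1},d_0,d_1$ for the three first differences and $h=\nabla\xi^k_i$, the constancy of second divided differences for quadratics gives
\[
\frac{d_0/h - d_{-1}/(\alpha_i^k h)}{(1+\alpha_i^k)h} \;=\; \frac{d_1/(\beta_i^k h) - d_0/h}{(1+\beta_i^k)h},
\]
and clearing denominators and normalising the coefficient of $d_0$ to $-1$ produces exactly $A_i^k$ and $B_i^k$ — I checked the arithmetic and it lands on \eqref{eq:alpha_beta_ik}. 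This buys an explanation of where the coefficients come from (they are forced by the uniqueness, up to scale, of the annihilating relation on four distinct nodes, as your dimension count makes explicit), at the cost of a slightly longer hand computation; the paper's approach buys brevity and machine-checkability but offers no insight into the formulas. One remark: your third paragraph is strictly speaking redundant — the direct expansion in the second paragraph already yields the claimed identity, so you do not need the one-dimensionality of the space of relations to conclude; it serves only as a sanity check that nothing was missed.
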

\begin{proof}
	By the linearity of \eqref{prop:eq:annihilation} respect to $F$, and since $\Pi_2^n$ is a tensor product space, it is enough to check this property on a basis of $\Pi_2^1$, say $\{1,x,x^2\}$. It can be confirmed by simplifying the algebraic expressions that result from replacing \eqref{eq:alpha_beta_ik} in \eqref{prop:eq:annihilation} for each $F\in\{1,x,x^2\}$. We provide a Mathematica notebook file (see Section Reproducibility) where this verification, and others, are performed.
\end{proof}

The Lagrange interpolation formula can be employed to readily derive a linear non-uniform subdivision operator that reproduces $\Pi_3^n$ for $(\xi^k,\xi^{k+1})$. It forms the basis of our proposal for a uniform non-linear scheme, as it did for other schemes (see \cite{DFH04, DFH09}).
\begin{defi}[Lagrange subdivision scheme] \label{defi:lagrange}
Given $\{(\alpha^k_i,\beta^k_i)\}_{i\in\Z}$, $\alpha^k_i,\beta^k_i\in(0,+\infty)$, $k\in\N_0$, we consider the subdivision operators
\begin{align}
L_{k}&:\lZn \lra \lZn \nonumber \\
(L_{k} f)_{2i+l}& := \Lambda_{l}[\alpha^k_{i},\beta^k_{i}](f_{i-1},f_{i},f_{i+1},f_{i+2}), \quad l=0,1, \ i\in\Z, \nonumber \\
	\label{def:Lambda}
	\Lambda_{0}[\alpha,\beta](f_{-1},f_{0},f_{1},f_{2}) &:= \sum_{j=-1}^2 a_{j}(\alpha,\beta) \ \cdot \ f_{j}, \quad
	\Lambda_{1}[\alpha,\beta](f_{-1},f_{0},f_{1},f_{2}) := \sum_{i=-1}^2 a_{1-j}(\beta,\alpha) \ \cdot \ f_{j},
\\
 \label{eq:ai}	\begin{split}
		a_{j}:&(0,+\infty)^{2} \lra \R, \quad j=-1,0,1,2, \\
		a_{-1}(\alpha,\beta) &= -\frac{3 (4 \beta+3)}{64 \left(\alpha^2+\alpha\right) (\alpha+\beta+1)}, \quad
		a_{0}(\alpha,\beta) = \frac{3 (4 \alpha+1) (4 \beta+3)}{64 \alpha (\beta+1)},\\
		a_{2}(\alpha,\beta) &= -\frac{3 (4 \alpha+1)}{64 \left(\beta^2+\beta\right)(\alpha+\beta+1) }, \quad
		a_{1}(\alpha,\beta) = \frac{(4 \alpha+1) (4 \beta+3)}{64  \beta(\alpha+1)}.
	\end{split}
\end{align}
\end{defi}

We have defined $L_k$ using $\Lambda_0$ and $\Lambda_1$ to emphasize that the dependence on the grid and the iteration $k$ is exclusively given by $\alpha^k_{i}$ and $\beta^k_{i}$. Our proposal of subdivision scheme, which will be defined in Section \ref{sec:new_scheme}, consists of replacing $\alpha^k_{i},\beta^k_{i}$ by some non-linear functions depending only on the data $f$.

The reason for considering $L_k$ to reproduce $\Pi_3^n$ instead of $\Pi_2^n$, our target, is that $L_k$ is uniquely determined by that 4-point stencil and its reproducing condition. The choice of 4 points is due to it being the minimum number required to locally use Proposition \ref{prop:annihilation} for computing $\alpha^k_{i}$ and $\beta^k_{i}$ from $f^k_{i-1}, f^k_{i}, f^k_{i+1}, f^k_{i+2}$, as we will see in Section \ref{sec:new_scheme}.

\begin{prop} \label{prop:lagrange_reproduces}
Let $\xi^{k}$ be a grid and define $\xi^{k+1}$ by applying the Chaikin's subdivision operator, i.e.,
	\[
	\xi^{k+1}_{2i} = \frac34 \xi^k_{i} + \frac14 \xi^k_{i+1}, \qquad \xi^{k+1}_{2i+1} = \frac14 \xi^k_{i} + \frac34 \xi^k_{i+1}, \qquad i\in\Z.
	\]
The subdivision operator in Definition \ref{defi:lagrange} reproduces $\Pi^{n}_{3}$ for $(\xi^k,\xi^{k+1})$.
\end{prop}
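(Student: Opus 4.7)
The plan is to reduce the statement to a routine algebraic identity by exploiting the uniqueness of the cubic Lagrange interpolant. First I would note that $L_k$ acts componentwise on the entries of $f\in\lZn$, so it is enough to prove the scalar version, \ie that $L_k$ reproduces $\Pi_3 = \Pi_3^1$. Since $\Lambda_0$ and $\Lambda_1$ are linear in $f$ with a four-point stencil and $\dim\Pi_3 = 4$, reproduction on that stencil is equivalent to the coefficients $\{a_j(\alpha_i^k,\beta_i^k)\}_{j=-1}^{2}$ being the values at $\xi^{k+1}_{2i}$ of the Lagrange basis polynomials associated with the nodes $(\xi^k_{i-1},\xi^k_i,\xi^k_{i+1},\xi^k_{i+2})$, together with the analogous statement for $\Lambda_1$ at $\xi^{k+1}_{2i+1}$.

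Because $\alpha_i^k$ and $\beta_i^k$ are invariant under translations and positive rescalings of the grid, I would normalize $\xi^k_i = 0$ and $\xi^k_{i+1} = 1$, so that $\xi^k_{i-1} = -\alpha$ and $\xi^k_{i+2} = 1+\beta$ (writing $\alpha := \alpha_i^k$, $\beta := \beta_i^k$), while Chaikin's rule fixes $\xi^{k+1}_{2i} = 1/4$ and $\xi^{k+1}_{2i+1} = 3/4$. The case $l=0$ then amounts to the four identities
\[
\prod_{r\neq j} \frac{1/4-x_r}{x_j-x_r} = a_j(\alpha,\beta), \qquad j\in\{-1,0,1,2\},
\]
with $(x_{-1},x_0,x_1,x_2) = (-\alpha,0,1,1+\beta)$. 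Each of these is a direct rational simplification matching the expressions in \eqref{eq:ai}, and I would defer them to the Mathematica notebook already used in the proof of Proposition \ref{prop:annihilation}.

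For $l=1$ I would avoid redoing the computation by invoking a symmetry. The affine map $t\mapsto (1+\beta)-t$ reverses the stencil, swaps the roles of $\alpha$ and $\beta$, and sends the subdivision point $3/4$ to $1/4$ (after the harmless translation that brings the pair of middle nodes back to $\{0,1\}$); the index relabelling is $j \mapsto 1-j$. This is precisely the identity
\[
\Lambda_1[\alpha,\beta](f_{-1},f_0,f_1,f_2) = \Lambda_0[\beta,\alpha](f_2,f_1,f_0,f_{-1})
\]
encoded in Definition \ref{defi:lagrange}, so the $l=1$ identities follow from the $l=0$ ones without additional computation. I do not foresee any serious obstacle; the only mildly tedious step is the four rational simplifications, which is precisely what the symbolic-algebra notebook is for.
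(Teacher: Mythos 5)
Your proposal is correct and follows essentially the same route as the paper: both reduce to the scalar cubic Lagrange interpolant, normalize the four nodes via $\alpha,\beta$ (the paper does this by dividing all differences by $\nabla\xi_0$, you by placing the middle nodes at $0$ and $1$), verify the resulting rational identities for the $a_j$ symbolically, and obtain the odd rule from the even one by the stencil-reversal symmetry built into the definition of $\Lambda_1$. The only detail the paper records that you omit is the (trivial) observation that Chaikin's rule preserves strict monotonicity, so $\xi^{k+1}$ is indeed a grid.
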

\begin{proof}
The Chaikin's operator is strictly monotonicity preserving, implying that $\xi^{k+1}$ is strictly increasing because $\xi^{k}$ is.
$L_{k}$ was computed using the Lagrange interpolation formula, ensuring reproduction. The steps taken to derive it are outlined here, and for a direct check of the reproduction property, we refer to the Mathematica file (see Section Reproducibility).

First, given the data $f_{j} = F(\xi_{j})$, $j=-1,0,1,2$, attached to some nodes $\xi_{-1},\xi_{0},\xi_{1},\xi_{2}$, there is a unique cubic polynomial interpolating them, that can be expressed as a linear combination of the Lagrange basis functions:
\[
F(x) = \sum_{i=-1}^{2} f_{i} \prod_{\substack{j=-1 \\ j \neq i}}^2 \frac{x-\xi_{j}}{\xi_{i}-\xi_{j}}.
\]
We are only concerned with two values of $x$, $x = \frac{3}{4} \xi_0 + \frac{1}{4} \xi_1$ and $x = \frac{1}{4} \xi_0 + \frac{3}{4} \xi_1$, for deriving the subdivision rules. Here, we present the first choice of $x$, as the outcome for the second one should mirror the first, with the order of data and nodes reversed.

To simplify the expression, we denote $\alpha := \nabla \xi_{-1} /\nabla \xi_0$, $\beta:=\nabla \xi_{1} / \nabla \xi_0$, and
\[
F(x) = \sum_{i=-1}^{2} a_{i}f_{i}, \qquad a_{i} := \prod_{\substack{j=-1 \\ j \neq i}}^2 \frac{x-\xi_{j}}{\xi_{i}-\xi_{j}}.
\]
To express $a_i$ in terms of $\alpha$ and $\beta$, we compute the following:
\begin{align*}
\frac{\xi_{2} - \xi_{-1}}{\nabla\xi_{0}} &= \frac{\nabla\xi_{1}+\nabla \xi_{0}+\nabla \xi_{-1}}{\nabla \xi_{0}} = \beta + 1 + \alpha, \quad
\frac{\xi_{1} - \xi_{-1}}{\nabla\xi_{0}}= 1 + \alpha, \quad
\frac{\xi_{2} - \xi_{0}}{\nabla\xi_{0}}  = \beta + 1, \\
\frac{x - \xi_{0}}{\nabla\xi_{0}} &= \frac{ - \frac14 \xi_{0} + \frac14\xi_{1}}{\nabla\xi_{0}} = \frac14, \quad
\frac{x - \xi_{-1}}{\nabla\xi_{0}} = \frac{ x - \xi_{0} + \nabla\xi_{-1}}{\nabla\xi_{0}} = \frac14 + \alpha, \quad
\frac{x - \xi_{1}}{\nabla\xi_{0}} = \frac{ \frac34 \xi_{0} - \frac34\xi_{1}}{\nabla\xi_{0}} = -\frac34, \\
\frac{x - \xi_{2}}{\nabla\xi_{0}} &=\frac{ x - \xi_{1} -  \nabla\xi_{1}}{\nabla\xi_{0}} = -\frac34 - \beta.
\end{align*}
Then, it is straightforward to see that
\begin{align*}
 a_{-1} &= \prod_{j=0}^2 \frac{x-\xi_{j}}{\xi_{-1}-\xi_{j}} = \prod_{j=0}^2 \frac{(x-\xi_{j})/\nabla\xi_{0}}{(\xi_{-1}-\xi_{j})/\nabla\xi_{0}} 
 = \frac{\frac14 (-\frac34)(-\frac34 - \beta)}{-\alpha(-1-\alpha)(-1-\alpha-\beta)} = -\frac{3(3 + 4\beta)}{64\alpha(1+\alpha)(1+\alpha+\beta)}.
\end{align*}
We can proceed similarly with the other coefficients to obtain \eqref{eq:ai}.
\end{proof}

\begin{rmk} \label{rmk:lagrange_affine}
	It can be verified that $\sum_{i=-1}^2 a_{i}(\alpha,\beta) = 1$, a consequence of the reproduction of constant functions, which are included in $\Pi_3^n$. Additionally, the scheme is scalar-based. These two properties imply that $L_k$ commutes with affine transformations, i.e.,
	\[
		L_k(\mathcal{A}_{M,c} f) = \mathcal{A}_{M,c} (L_k f), \quad \mathcal{A}_{M,c} f := (M f_i + c)_{i\in\Z}, \quad \forall M\in\R^{n\times n}, \ \forall c \in\R^n, \ \forall f\in\lZn.
	\]
	This property is useful in CAD applications and it is easily verified by direct computation.
\end{rmk}


\begin{rmk} \label{rmk:dyn_scheme}
For uniform (i.e. equispaced) grids, where $\alpha^k_{i}=\beta^k_{i}=1$, the Lagrange subdivision operators coincide with the one studied in \cite{DFH04}, whose rules are
\begin{align*}
	\Lambda_0[1,1](f_{-1},f_{0},f_{1},f_{2}) = \frac1{128}(-7f_{-1} +105f_0 +35f_1 -5f_2), \\
	\Lambda_1[1,1](f_{-1},f_{0},f_{1},f_{2}) = \frac1{128}(-5f_{-1} +35f_0 +105f_1 -7f_2).
\end{align*}
This linear scheme is $\cC^2$.
\end{rmk}

We have introduced all the necessary components to establish a single non-linear and uniform subdivision scheme capable of reproducing $\Pi_2^n$ on a variety of grids. This scheme will be defined next, in Section \ref{sec:new_scheme}. It is important to note that the scheme will be defined in a non-stationary manner, which is not a requisite for achieving reproduction capability but proves to be highly advantageous for demonstrating convergence. We adopt two different approaches to prove convergence, as we find them interesting from a theoretical development perspective. In Section \ref{sec:convergence_asymptotic}, we combine the results of \emph{asymptotic equivalence} from \cite{DLY14} for linear non-uniform non-stationary schemes with the \emph{quasilinear} concept of \cite{CDM03}. This synthesis gives rise to a new class of subdivision schemes that demonstrate convergence and, with the addition of an easily met requirement, achieve $\cC^1$ continuity. The last is applicable to our scheme.
In Section \ref{sec:convergence_nonlinear}, we analyse convergence using standard analytical tools for non-linear schemes, demonstrating their applicability in the non-stationary case by easily adapting them. Section \ref{sec:numerical} illustrates the performance of our proposal in several numerical examples, highlighting its ability to produce curvature-continuous curves and good-looking shapes. Our work concludes with final remarks and considerations for the future in Section \ref{sec:conclusions}.

\section{Definition of the new subdivision scheme. Analysis of reproduction} \label{sec:new_scheme}

\begin{defi} \label{defi:scheme}
For a fixed $\rho\geq0$ and $n\geq 2$, we consider the uniform non-linear non-stationary subdivision scheme defined by the rules $\Psi^k_0,\Psi^k_1:(\R^n)^4\lra\R^n$,
\begin{align} \label{eq:nonlinear_rules}
\begin{split}
\Psi^k_l(f_{-1},f_{0},f_{1},f_{2}) &:= \Lambda_{l}[((\alpha^k,\beta^k)\circ(A,B))(\nabla f_{-1},\nabla f_{0},\nabla f_{1})](f_{-1},f_{0},f_{1},f_{2}), \quad l=0,1,
\end{split}
\end{align}
where $\Lambda_{0},\Lambda_{1}$ are defined in \eqref{def:Lambda} and
\begin{align}
	\nonumber \alpha^k,\beta^k:(0,&+\infty)^2\lra[(1-2^{-k}\rho)^{-1},1+2^{-k}\rho], \qquad A,B:(\R^n)^3\lra(0,+\infty)\\
\begin{split} \label{eq:alpha_beta_k}
	\alpha^k(A,B) &:= \max\left\{(1+2^{-k}\rho)^{-1},\min\left\{1+2^{-k}\rho,\frac{1}{A+\frac{\sqrt{A (A+1) B (B+1)}}{B+1}}\right\}\right\}, \\
	\beta^k(A,B)&:= \alpha^k(B,A),
\end{split}\\
\label{eq:AB}
\begin{split}
		A(\delta_{-1},\delta_{0},\delta_{1}) &:= \begin{cases}
			\frac 12, & \text{if } \text{num}\cdot\text{den} = 0, \\
			\text{abs}(\frac{
				\text{num}
			}{
				\text{den}
			}), & \text{otherwise}.
			\end{cases}
			, \quad
			B(\delta_{-1},\delta_{0},\delta_{1}) := A(\delta_{1},\delta_{0},\delta_{-1})\\
			\text{num} &= \det\begin{pmatrix}
				(\delta_{-1})^T \delta_0 & (\delta_{-1})^T \delta_{1}\\
				(\delta_{1})^T \delta_0 & |\delta_{1}|_2^2
			\end{pmatrix},
			\quad
			\text{den} = 
			\det\begin{pmatrix}
				|\delta_{-1}|_2^2 & (\delta_{-1})^T \delta_{1}\\
				(\delta_{-1})^T \delta_{1} & |\delta_{1}|_2^2
		\end{pmatrix},
	\end{split}
\end{align}
where $\delta_{-1},\delta_{0},\delta_{1}\in\R^n$ are written as columns vectors.
\end{defi}
\begin{rmk}
	For $n=2$, observe that for any $\delta_{-1},\delta_{0},\delta_{1}\in\R^2$, we have that
	\begin{align*}
		\text{num} &=
		\det\left(\begin{pmatrix}
			\delta_{-1} &  \delta_{1}
		\end{pmatrix}^T\begin{pmatrix}
			\delta_0 & \delta_{1}\
		\end{pmatrix}\right)
		=
		\det\begin{pmatrix}
			\delta_{-1} & \delta_{1}
		\end{pmatrix}\det\begin{pmatrix}
		\delta_{0} & \delta_{1}
	\end{pmatrix}, \\
	\text{den} &=
	\det\left(\begin{pmatrix}
		\delta_{-1} &  \delta_{1}
	\end{pmatrix}^T\begin{pmatrix}
		\delta_{-1} & \delta_{1}\
	\end{pmatrix}\right)
	=
	\det\begin{pmatrix}
		\delta_{-1} & \delta_{1}
	\end{pmatrix}^2.
\end{align*}
Hence, $A$ can be rewritten as
	\begin{equation}
		\label{eq:AB2}
			A(\delta_{-1},\delta_{0},\delta_{1}) :=    
			\begin{cases}
				\frac 12, & \text{if } \prod_{i\neq j}\det\begin{pmatrix}
					\delta_i & \delta_{j} 
				\end{pmatrix}= 0, \\
				\text{abs}\left (\frac{\det\begin{pmatrix}
				\delta_0 & \delta_{1} 
			\end{pmatrix}}{
			\det\begin{pmatrix}
				\delta_{-1} & \delta_{1} 
			\end{pmatrix}}\right ), & \text{otherwise}.
		\end{cases}
	\end{equation}
\end{rmk}

Note that $\alpha^k$ and $\beta^k$ involve truncating a number to ensure it always falls between two values dependent on $2^{-k}\rho$. This approach has been carefully designed to maintain the reproduction capability intact (as demonstrated in Theorem \ref{teo:scheme_reproduction}) while simultaneously ensuring the $\cC^1$ convergence of the scheme (as established in Corollary \ref{cor:scheme_C1}).
Observe that $\lim_{k\to+\infty} (\alpha^k,\beta^k) = (1,1)$, so that the proposed subdivision operators converge to the ones of the Lagrange scheme for uniform grids (see Remark \ref{rmk:dyn_scheme}). In particular, for $\rho = 0$, the proposed operator is exactly that one, which is linear and possesses good properties. With this, the asymptotic properties of our scheme, including convergence and regularity, are linked with the mentioned linear scheme.

Note that, for every $\rho>0$, a distinct scheme is obtained, which is non-stationary (since $\alpha^k$ and $\beta^k$ vary among iterations), uniform (only two subdivision rules per iteration are required, which do not consider any grid information), and non-linear. As shown in Section \ref{sec:numerical}, $\rho$ is related with the curve flexibility between points. We have observed that pleasing shapes emerge for $\rho$ values within the range of $[1,4]$, but this interval can be even wider, depending on the stylistic preferences of the end-user.

Observe that if $\alpha^k,\beta^k$ are defined without truncation, their image set is $(0,+\infty)$, which is unbounded and induces practical problems. They could be truncated with a constant number, independent of $k$, which would lead to a stationary scheme. But we did not find any benefit of considering such stationary version, which introduces several difficulties in proving convergence and reproduction simultaneously.

Before delving deeper into the analysis, it is essential to consider potential issues related to the scheme definition.

\begin{prop}
	For any $\rho\geq0$, the subdivision scheme in Definition \ref{defi:scheme} is well-defined.
\end{prop}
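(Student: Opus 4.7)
The plan is to verify that each of the three composed pieces of \eqref{eq:nonlinear_rules}---namely the computation of $(A,B)$ from $(\nabla f_{-1},\nabla f_{0},\nabla f_{1})$, the clipping to produce $(\alpha^k,\beta^k)$, and finally the Lagrange rule $\Lambda_l[\alpha^k,\beta^k]$---takes values in the domain expected by the next piece, so that the composition yields a well-defined function $(\R^n)^4 \to \R^n$. No serious obstacle is anticipated: the case splits and truncations in Definition \ref{defi:scheme} were deliberately crafted to avoid every potential indeterminate expression, so the argument reduces to a routine check of each piece.

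First I would address $A,B:(\R^n)^3 \to (0,+\infty)$ in \eqref{eq:AB}. The only potential issue is division by $\text{den}$, but the case split explicitly returns $1/2$ whenever $\text{num}\cdot\text{den}=0$ (which in particular covers $\text{den}=0$), and in the complementary branch both $\text{num}$ and $\text{den}$ are nonzero, so $\text{abs}(\text{num}/\text{den})$ is a well-defined positive real. Hence $A$ and $B$ really take values in $(0,+\infty)$.

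Next I would show that $\alpha^k,\beta^k$ in \eqref{eq:alpha_beta_k} are well-defined on $(0,+\infty)^2$. The inner expression $1/(A + \sqrt{A(A+1)B(B+1)}/(B+1))$ is meaningful because, for $A,B>0$, the radicand $A(A+1)B(B+1)$ is strictly positive, $B+1>1$, and the total denominator exceeds $A>0$. The outer $\max\{(1+2^{-k}\rho)^{-1},\min\{1+2^{-k}\rho,\cdot\}\}$ then returns a value in the closed interval $[(1+2^{-k}\rho)^{-1},\,1+2^{-k}\rho]$, which is contained in $(0,+\infty)$ for every $k\in\N_0$ and every $\rho\geq 0$.

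Finally I would verify that $\Lambda_l[\alpha,\beta]$ from \eqref{def:Lambda} is well-defined whenever $\alpha,\beta\in(0,+\infty)$. Inspecting the coefficients $a_j$ in \eqref{eq:ai}, each denominator is a product drawn from the list $\alpha$, $\beta$, $\alpha+1$, $\beta+1$, $\alpha+\beta+1$, all strictly positive for positive $\alpha,\beta$. Composing the three pieces, $\Psi^k_l$ defined in \eqref{eq:nonlinear_rules} is a well-defined map $(\R^n)^4\to\R^n$ for all $k\in\N_0$ and $l\in\{0,1\}$, which is exactly the claim.
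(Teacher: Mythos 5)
Your proposal is correct and follows essentially the same route as the paper's own proof: check that $A,B$ are well-defined and positive (the case split covers $\text{den}=0$), that $\alpha^k,\beta^k$ are well-defined and land in $[(1+2^{-k}\rho)^{-1},1+2^{-k}\rho]\subset(0,+\infty)$, and that the denominators of the $a_j$ are positive for positive arguments. You simply spell out each verification in slightly more detail than the paper does.
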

\begin{proof}
	For any $\delta_{-1},\delta_0,\delta_1\in\R^n$, we have that $A,B$ are well-defined and positive. Then, $\alpha^k$ and $\beta^k$, which are well-defined, can be composed with them and, by definition,
	\begin{equation}  \label{eq:alpha_bounded}
	((\alpha^k,\beta^k)\circ(A,B))(\delta_{-1},\delta_0,\delta_1)\in [(1+2^{-k}\rho)^{-1},1+2^{-k}\rho]^2.
\end{equation}
	The functions $a_{-1},a_0,a_1$ are being evaluated in positive numbers, so the denominators are positive and we conclude that $\Psi^k_0,\Psi^k_1$ are well-defined.
\end{proof}

The concepts presented in this paper can be applied to create similar schemes for different purposes. For example, an interpolatory subdivision scheme would be very practical. However, in our study, we opted to explore a `dual' scheme, which, in essence, generates new data more akin to the Chaikin's scheme. The reason for this choice is revealed in the following lemma, where one can observe that this particular refinement scheme offers a highly advantageous property. We leverage this property with the truncated-style definition of $\alpha^k$ and $\beta^k$, which becomes key to proving $\cC^1$ convergence and reproduction simultaneously.

\begin{lem} \label{lem:grid_contractivity}
	Let $\xi^{k},\xi^{k+1}$ be two grids fulfilling
	\begin{equation} \label{eq:chakin}
	\xi^{k+1}_{2i} = \frac34 \xi^k_{i} + \frac14 \xi^k_{i+1}, \qquad \xi^{k+1}_{2i+1} = \frac14 \xi^k_{i} + \frac34 \xi^k_{i+1}, \qquad \forall i\in\Z.
	\end{equation}
	Then, for any $\rho \geq 0$,
	\[
	(1+\rho)^{-1} \leq \alpha_i^k \leq 1+\rho, \ \forall i\in\Z \quad \longrightarrow \quad \left(1+\frac\rho2\right)^{-1} \leq \alpha_i^{k+1} \leq 1+\frac\rho2, \ \forall i\in\Z.
	\]
	\begin{proof}
		We are going to prove that $\alpha_{2i}^{k+1}=\frac{\nabla \xi^{k+1}_{2i+1}}{\nabla \xi^{k+1}_{2i}}$ and $\alpha_{2i+1}^{k+1}=\frac{\nabla \xi^{k+1}_{2i+2}}{\nabla \xi^{k+1}_{2i+1}}$ are in between these bounds, $\forall i\in\Z$.
		First, observe that
		\begin{equation} \label{eq:lem:grid_contractivity:1}
			(1+\rho)^{-1} \leq x \leq 1+\rho, \quad \longrightarrow \quad (1+\rho)^{-1} \leq x^{-1} \leq 1+\rho.
		\end{equation}
		Second, we deduce that
		\[
		\nabla \xi^{k+1}_{2i+1} = \xi^{k+1}_{2i+2}-\xi^{k+1}_{2i+1} = \frac14\xi^{k}_{i+2}-\frac14\xi^{k}_{i}, \quad
		\nabla \xi^{k+1}_{2i} = \xi^{k+1}_{2i+1}-\xi^{k+1}_{2i} = \frac12\xi^{k}_{i+1}-\frac12\xi^{k}_{i}.
		\]
		Then,
	\begin{align*}
		\frac{\nabla \xi^{k+1}_{2i+1}}{\nabla \xi^{k+1}_{2i}}-1 &= \frac{\frac14\xi^{k}_{i+2}-\frac14\xi^{k}_{i}}{\frac12\xi^{k}_{i+1}-\frac12\xi^{k}_{i}}-1
		= \frac12\left(\frac{\xi^{k}_{i+2}-\xi^{k}_{i}}{\xi^{k}_{i+1}-\xi^{k}_{i}}-2\right)
		= \frac12\left(\frac{\xi^{k}_{i+2}-\xi^{k}_{i}-\xi^{k}_{i+1}+\xi^{k}_{i}}{\xi^{k}_{i+1}-\xi^{k}_{i}}-1\right) \\
		&= \frac12\left(\frac{\nabla \xi^{k}_{i+1}}{\nabla \xi^{k}_{i}}-1\right),\\
		\frac{\nabla \xi^{k+1}_{2i+1}}{\nabla \xi^{k+1}_{2i+2}}-1 &= \frac{\frac14\xi^{k}_{i+2}-\frac14\xi^{k}_{i}}{\frac12\xi^{k}_{i+2}-\frac12\xi^{k}_{i+1}}-1
		= \frac12\left(\frac{\xi^{k}_{i+2}-\xi^{k}_{i}}{\xi^{k}_{i+2}-\xi^{k}_{i+1}}-2\right) = \frac12\left(\frac{\xi^{k}_{i+2}-\xi^{k}_{i}-\xi^{k}_{i+2}+\xi^{k}_{i+1}}{\xi^{k}_{i+2}-\xi^{k}_{i+1}}-1\right) \\
		&= \frac12\left(\frac{\nabla \xi^{k}_{i}}{\nabla \xi^{k}_{i+1}}-1\right).
	\end{align*}
Applying to the last two formulas the fact that
\[
x\in [(1+\rho)^{-1}, 1+\rho] \quad \lra \quad 1+\frac12(x-1)\in \left [\frac{2+\rho}{2+2\rho}, 1+\frac\rho2\right ] \subset \left [\left (1+\frac\rho2\right )^{-1}, 1+\frac\rho2\right ]
\]
(the latter can be checked in the Mathematica file, see Section Reproducibility), we arrive to
\[
\left (1+\frac\rho2\right )^{-1} \leq \frac{\nabla \xi^{k+1}_{2i+1}}{\nabla \xi^{k+1}_{2i}},\frac{\nabla \xi^{k+1}_{2i+1}}{\nabla \xi^{k+1}_{2i+2}} \leq 1+\frac\rho2.
\]
Using \eqref{eq:lem:grid_contractivity:1}, we arrive to the claim.
\end{proof}
\end{lem}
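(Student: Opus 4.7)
The plan is to reduce the claim to an elementary one-variable bound on the affine map $x\mapsto 1+\tfrac12(x-1)$, after using the Chaikin relations to write level-$(k{+}1)$ gaps in terms of level-$k$ gaps.

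First I would rewrite \eqref{eq:chakin} as the two identities $\nabla\xi^{k+1}_{2i}=\tfrac12\nabla\xi^k_i$ and $\nabla\xi^{k+1}_{2i+1}=\tfrac14(\nabla\xi^k_i+\nabla\xi^k_{i+1})$, obtained by direct subtraction. Together these cover every consecutive gap of $\xi^{k+1}$.

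Then I would dispatch by parity of $j$ in $\alpha^{k+1}_j=\nabla\xi^{k+1}_{j-1}/\nabla\xi^{k+1}_j$. A short calculation gives $\frac{\nabla\xi^{k+1}_{2i+1}}{\nabla\xi^{k+1}_{2i}}=1+\tfrac12\bigl(\nabla\xi^k_{i+1}/\nabla\xi^k_i-1\bigr)$ and $\frac{\nabla\xi^{k+1}_{2i+1}}{\nabla\xi^{k+1}_{2i+2}}=1+\tfrac12\bigl(\nabla\xi^k_i/\nabla\xi^k_{i+1}-1\bigr)$, and every $\alpha^{k+1}_j$ equals one of these two expressions or its reciprocal. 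Because the hypothesised interval $[(1+\rho)^{-1},1+\rho]$ is closed under inversion, the argument $x$ of the affine map always belongs to it.

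Finally I would close with the elementary estimate: for $x\in[(1+\rho)^{-1},1+\rho]$, one has $1+\tfrac12(x-1)\in\bigl[\tfrac{2+\rho}{2+2\rho},\,1+\tfrac\rho2\bigr]\subset\bigl[(1+\tfrac\rho2)^{-1},\,1+\tfrac\rho2\bigr]$, where the only nontrivial containment $\tfrac{2+\rho}{2+2\rho}\ge(1+\tfrac\rho2)^{-1}$ rearranges to $\rho^2\ge 0$. Invoking once more the inversion symmetry of the target interval transports the bound from each ratio to its reciprocal, giving the claimed enclosure for every $\alpha^{k+1}_j$.

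The main obstacle is cosmetic: careful index bookkeeping to ensure that the two parity subcases together with reciprocals really cover every $j\in\Z$. The substantive content of the lemma is the contraction factor $\tfrac12$ attached to $x-1$, which makes the deviation from uniformity halve with each Chaikin step; everything else collapses to the inequality $\rho^2\ge 0$.
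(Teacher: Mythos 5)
Your proposal is correct and follows essentially the same route as the paper: express the level-$(k{+}1)$ gaps via the Chaikin relations, reduce each new ratio to $1+\tfrac12(x-1)$ with $x$ a level-$k$ ratio, and use that both intervals are closed under inversion. The only (welcome) addition is that you verify the containment $\tfrac{2+\rho}{2+2\rho}\ge\bigl(1+\tfrac\rho2\bigr)^{-1}$ by hand (reducing it to $\rho^2\ge0$), whereas the paper defers this check to its Mathematica file.
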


We are now prepared to demonstrate that the scheme reproduces non-degenerate parabolas (i.e., the image of $F\in\Pi_2^n$ is not confined to a line). Additionally, we establish that the scheme generates lines when the data is collinear.
\begin{teo} \label{teo:scheme_reproduction}
	Let us consider \(\mathcal{L} = \{F\in\Pi_2^n : F(\R) \text{ is contained in a line}\}\).
	For any $\rho\geq0$, the subdivision scheme in Definition \ref{defi:scheme} reproduces
	\(\cF = \Pi_2^n \setminus \mathcal{L}\)
	on any $\xi^0$ grid fulfilling
	\[
	(1+\rho)^{-1}\leq \alpha^0_{i} \leq 1+\rho, \quad i\in\Z.
	\]
	Moreover, if $f^0$ lies on a line, then $f^k$ lies on the same line.
\end{teo}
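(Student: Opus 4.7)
The plan is to apply Proposition \ref{prop:step_implies_limit} to the Chaikin-refined sequence $\{\xi^k\}_{k\in\N_0}$ generated from $\xi^0$ by \eqref{eq:chakin}. Chaikin's algorithm produces a uniformly convergent sequence with a strictly increasing limit $\sigma$, so it suffices to establish stepwise reproduction: assuming $f^k=F|_{\xi^k}$ for some $F\in\cF$, show $f^{k+1}=F|_{\xi^{k+1}}$. I would argue by induction on $k$, carrying along the bound $(1+2^{-k}\rho)^{-1}\le\alpha^k_i\le1+2^{-k}\rho$, which propagates from the hypothesis on $\xi^0$ by repeated application of Lemma \ref{lem:grid_contractivity}. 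The inductive step then splits into two main computations.

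First, I would show that the non-linear probes $A,B$ recover the annihilation coefficients $A_i^k,B_i^k$ of \eqref{eq:alpha_beta_ik}. Fix $i\in\Z$ and set $\delta_j=\nabla f^k_{i+j}$ for $j\in\{-1,0,1\}$. Writing $F(t)=at^2+bt+c$ with $a,b\in\R^n$ linearly independent (equivalent to $F\in\Pi_2^n\setminus\mathcal{L}$), the factorisation $\delta_j=(\xi^k_{i+j+1}-\xi^k_{i+j})\bigl(a(\xi^k_{i+j+1}+\xi^k_{i+j})+b\bigr)$ shows that $\delta_{-1}$ and $\delta_1$ are linearly independent, so $\text{den}\ne 0$. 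Proposition \ref{prop:annihilation} then yields $\delta_0=A_i^k\delta_{-1}+B_i^k\delta_1$. Substituting this expansion into the first column of the matrix in \eqref{eq:AB} and using multilinearity of the determinant (the cross-term with $B_i^k$ produces a matrix with two equal columns and thus vanishes) gives $\text{num}=A_i^k\cdot\text{den}$. Since $A_i^k>0$, the absolute value is inactive and $A(\delta_{-1},\delta_0,\delta_1)=A_i^k$; symmetrically $B(\delta_{-1},\delta_0,\delta_1)=B_i^k$.

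Second, I would verify that the composition $\alpha^k\circ(A,B)$ recovers $\alpha_i^k$. Using the factorisation $\beta_i^k+1+\alpha_i^k(\alpha_i^k+\beta_i^k+2)=(\alpha_i^k+1)(\alpha_i^k+\beta_i^k+1)$, a short computation collapses the radicand and yields
\[
\frac{1}{A_i^k+\frac{\sqrt{A_i^k(A_i^k+1)B_i^k(B_i^k+1)}}{B_i^k+1}}=\alpha_i^k,
\]
together with the symmetric identity for $\beta_i^k$ (both appear in the accompanying Mathematica notebook). By the inductive bound, $\alpha_i^k\in[(1+2^{-k}\rho)^{-1},1+2^{-k}\rho]$, so the $\max$-$\min$ truncation in \eqref{eq:alpha_beta_k} is inactive and $\alpha^k(A_i^k,B_i^k)=\alpha_i^k$, $\beta^k(A_i^k,B_i^k)=\beta_i^k$. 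Substituting into \eqref{eq:nonlinear_rules}, $\Psi^k_l$ on the current stencil coincides with $\Lambda_l[\alpha_i^k,\beta_i^k]$, and Proposition \ref{prop:lagrange_reproduces} (together with $\Pi_2^n\subset\Pi_3^n$) then gives $f^{k+1}_{2i+l}=F(\xi^{k+1}_{2i+l})$, closing the induction.

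For the collinear statement, if $f^k$ lies on a line then every $\delta_j$ is parallel to its direction vector, so each $2\times 2$ matrix in \eqref{eq:AB} has rank at most one and both $\text{num},\text{den}$ vanish; by definition $A=B=\tfrac12$. A one-line check gives $\alpha^k(\tfrac12,\tfrac12)=\beta^k(\tfrac12,\tfrac12)=1$ for every $k$ (the untruncated value already lands at $1$), so $\Psi^k_l$ collapses to the linear operator $\Lambda_l[1,1]$, which is scalar-based with coefficients summing to one (Remark \ref{rmk:lagrange_affine}) and hence maps collinear data to the same line; a second induction on $k$ concludes. The main obstacle I expect is the identification $A(\delta_{-1},\delta_0,\delta_1)=A_i^k$: the Gram-style quotient in \eqref{eq:AB} bears no manifest resemblance to the annihilation ratio, and unveiling it requires exploiting the expansion of $\delta_0$ in $\{\delta_{-1},\delta_1\}$ via multilinearity of the determinant — a step that, tellingly, is independent of the ambient dimension $n$, which is precisely what makes the construction work uniformly in $\R^n$ for every $n\ge 2$.
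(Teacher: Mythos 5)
Your proposal is correct and follows essentially the same route as the paper: Chaikin grids plus Proposition \ref{prop:step_implies_limit}, Lemma \ref{lem:grid_contractivity} to keep the truncation inactive, inversion of the $(\alpha,\beta)\mapsto(A,B)$ map, and identification of $A,B$ with the annihilation coefficients (your multilinearity-of-the-determinant step is exactly the Cramer's-rule computation the paper performs, and your direct linear-independence argument via $\delta_j=\nabla\xi^k_{i+j}\bigl(a(\xi^k_{i+j+1}+\xi^k_{i+j})+b\bigr)$ is a slightly cleaner justification that $\mathrm{den}\neq0$ for $F\notin\mathcal{L}$ than the paper's contrapositive). The collinear case is also handled equivalently, so no gaps.
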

\begin{proof}
	Using Proposition \ref{prop:step_implies_limit}, it is sufficient to prove the stepwise reproduction for some sequence of grids $\{\xi^{k}\}_{k\in\N_0}$. We define the grids using the Chaikin's subdivision scheme, as in \eqref{eq:chakin}.
	
	Denote $f^k = F|_{\xi^k}\in\liZn$, $F\in\cF$. Let us focus on four consecutive data $f^k_{i-1},f^k_{i},f^k_{i+1},f^k_{i+2}$, that indeed fulfil
	\begin{equation} \label{eq:hypothesis}
		F(\xi^k_j)=f^k_{j}, \quad j=i-1,\ldots,i+2.
	\end{equation}
	We have to prove that
	\[
	\Psi^k_l(f^k_{i-1},f^k_{i},f^k_{i+1},f^k_{i+2}) = F(\xi^{k+1}_{2i+l}), \quad l=0,1.
	\]
	
	Considering that the proposed scheme is primarily built upon the Lagrange scheme of Definition \ref{defi:lagrange}, which stepwise reproduces $\Pi_2^n$ according to Proposition \ref{prop:lagrange_reproduces}, our focus shifts to confirming that the non-linear scheme exactly computes $\alpha^k_i$ and $\beta^k_i$ from the data. This requires us to verify that
	\[
	(\alpha^k_i,\beta^k_i) = ((\alpha^k,\beta^k)\circ(A,B))(\nabla f^k_{i-1},\nabla f^k_{i},\nabla f^k_{i+1}),
	\]
	because it would imply
	\[
		\Psi^k_l(f^k_{i-1},f^k_{i},f^k_{i+1},f^k_{i+2}) = \Lambda_{l}[\alpha^k_{i},\beta^k_{i}](f^k_{i-1},f^k_{i},f^k_{i+1},f^k_{i+2}) = (L_k f^k)_{2i+l} = F(\xi^{k+1}_{2i+l}), \qquad l=0,1.
	\]
	
	Recall the annihilation property for $\Pi^2_n$ in Proposition \ref{prop:annihilation}:
	\begin{equation} \label{eq:annihilation}
		A_i^k \nabla f^k_{i-1} - \nabla f^k_i + B_i^k \nabla f^k_{i+1} = 0\in\R^n,
	\end{equation}
	where
	\begin{equation} \label{eq:AB_from_alphabeta}
		A_i^k=\frac{\beta_i^k+1}{\alpha_i^k (\alpha_i^k+\beta_i^k+2)}, \qquad B_i^k=\frac{\alpha_i^k+1}{\beta_i^k (\alpha_i^k+\beta_i^k+2)}.
	\end{equation}
	To ensure that $\alpha^k_i,\beta^k_i$ can be recovered from $A^k_i,B^k_i$, we observe that the function
	\[(\alpha,\beta)\mapsto\left(\frac{\beta+1}{\alpha (\alpha+\beta+2)},\frac{\alpha+1}{\beta (\alpha+\beta+2)}\right)\]
	is bijective from $(0,+\infty)^2$ to itself, with inverse
	\[
	(A,B)\mapsto\left(\frac{1}{A+\frac{\sqrt{A (A+1) B (B+1)}}{B+1}},\frac{1}{B+\frac{\sqrt{A (A+1) B (B+1)}}{A+1}}\right), \quad A,B>0.\]
	See the Mathematica file (Section Reproducibility) for a verification.
	Hence, $\alpha^k_i = \alpha^k(A^k_i,B^k_i)$ and $\beta^k_i = \beta^k(A^k_i,B^k_i)$, with $\alpha^k,\beta^k$ as defined in \eqref{eq:alpha_beta_k}, provided that \[(1+2^{-k}\rho)^{-1}\leq \alpha^k_i,\beta^k_i \leq 1+2^{-k}\rho.\]
	We can ensure the last condition for $k=0$, by hypothesis, and for any $k>0$, by applying Lemma \ref{lem:grid_contractivity} recursively.
	
	The last point of this proof consists in checking that
	\begin{equation} \label{eq:AkAcoincides}
	A^k_i = A(\nabla f^k_{i-1},\nabla f^k_{i},\nabla f^k_{i+1}), \quad
	B^k_i = B(\nabla f^k_{i-1},\nabla f^k_{i},\nabla f^k_{i+1}),
\end{equation}
with $A,B$ as defined in \eqref{eq:AB}.
	It is done by solving the $n\times 2$ linear system \eqref{eq:annihilation}, with unknowns $A_i^k,B_i^k$. Despite the system size, we know that there exists a solution, the one in \eqref{eq:AB_from_alphabeta} and it is unique provided that $\text{rank}\begin{pmatrix}
		\nabla f^k_{i-1} & \nabla f^k_{i+1}
	\end{pmatrix} = 2$. In that case, the solution can be written as:
	\[
	\begin{pmatrix}
		A^k_i \\ B^k_i
	\end{pmatrix}
	=
	\left (
	\begin{pmatrix}
		\nabla f^k_{i-1} & \nabla f^k_{i+1} 
	\end{pmatrix}^T
	\begin{pmatrix}
		\nabla f^k_{i-1} & \nabla f^k_{i+1} 
	\end{pmatrix}\right )^{-1}
	\begin{pmatrix}
		\nabla f^k_{i-1} & \nabla f^k_{i+1} 
	\end{pmatrix}^T
	\nabla f^k_i,
	\]
	where we are writing $\nabla f^k_i\in\R^n$ as column vectors. Performing the matrices multiplications:
	\[
	\begin{pmatrix}
		A^k_i \\ B^k_i
	\end{pmatrix}
	=
	\begin{pmatrix}
		|\nabla f^k_{i-1}|_2^2 & (\nabla f^k_{i-1})^T \nabla f^k_{i+1}\\
		(\nabla f^k_{i-1})^T \nabla f^k_{i+1} & |\nabla f^k_{i+1}|_2^2
	\end{pmatrix}^{-1}
	\begin{pmatrix}
		(\nabla f^k_{i-1})^T \nabla f^k_i \\ (\nabla f^k_{i+1})^T \nabla f^k_i
	\end{pmatrix}.
	\]
	Applying the Cramer's formula, we arrive to the definition of $A,B$ in \eqref{eq:AB}, provided that $\text{num}\cdot \text{den} \neq 0$. Since $\text{rank}\begin{pmatrix}
		\nabla f^k_{i-1} & \nabla f^k_{i+1}
	\end{pmatrix} = 2$, then $\text{den} \neq 0$.
	By \eqref{eq:AB_from_alphabeta}, we know that $A^k_i,B^k_i$ are positive numbers, implying that $\text{num} \neq 0$.

Now we address the case $\text{rank}\begin{pmatrix}
	\nabla f^k_{i-1} & \nabla f^k_{i+1}
\end{pmatrix} < 2$, that is, when $\nabla f^k_{i-1}$ and $\nabla f^k_{i+1}$ are proportional. 
According to \eqref{eq:annihilation},
\begin{equation} \label{eq:in_plane}
\nabla f^k_i = A_i^k \nabla f^k_{i-1}  + B_i^k \nabla f^k_{i+1}.
\end{equation}
Thus, we have that the three vectors are proportional, meaning that $f^k_{i-1},f^k_{i},f^k_{i+1},f^k_{i+2}$ are co-linear. This implies that $F\in\mathcal{L}$. Since $\Psi^k_l$, $l=0,1$, consists of applying $\Lambda_l[\alpha,\beta]$ for some $\alpha,\beta$ and, by Remark \ref{rmk:lagrange_affine}, it always performs convex combinations of the data, then $f^{k+1}_{2i},f^{k+1}_{2i+1}$ lies on the same line as $f^k_{i-1},f^k_{i},f^k_{i+1},f^k_{i+2}$.
\end{proof}

\begin{rmk}
	According to \eqref{eq:in_plane}, if $f^k = F|_{\xi^k}$, $F\in\Pi_2^n$, then the three vectors $\nabla f^k_{i-1},\nabla f^k_{i},\nabla f^k_{i+1}$ are linearly dependent and, as a result, $f^k_{i-1}, f^k_{i}, f^k_{i+1}, f^k_{i+2}$ lie on the same plane.
	If $F\in \Pi_2^n \setminus \mathcal{L}$, then $\nabla f^k_{i-1},\nabla f^k_{i+1}$ are linearly independent and they form a basis of that plane. In addition, since $A_i^k,B_i^k>0$, then $\nabla f^k_i$ falls into the first quadrant defined by this basis. This is illustrated in Figure \ref{fig:quadrants}-(a). We can also see that $\nabla f^k_i$ cannot fall into any other quadrant when the data describe a parabola.

	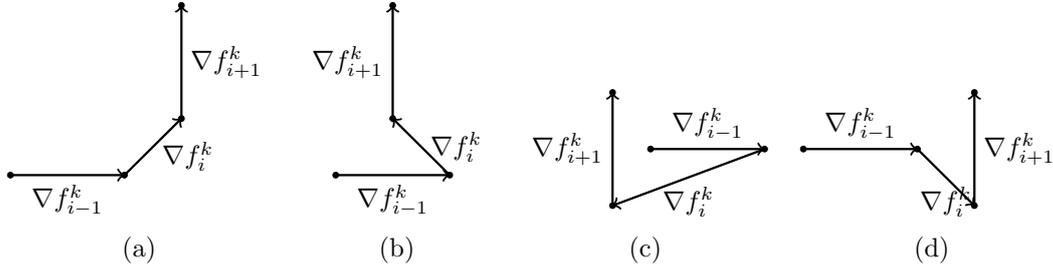
\begin{figure}[h]
		\centering
		\begin{tabular}{cccc}
			\begin{tikzpicture}[scale=1.5]
				\draw[->, line width=0.3mm] (-1,0) -- (0,0);
				\node[below] at (-1/2,0) {$\nabla f^k_{i-1}$};
				\draw[->, line width=0.3mm] (0,0) -- (1/2,1/2);
				\node[right] at (1/4,1/6) {$\nabla f^k_{i}$};
				\draw[->, line width=0.3mm] (1/2,1/2) -- (1/2,3/2) node[above] {};
				\node[right] at (1/2,1) {$\nabla f^k_{i+1}$};
				\draw[fill] (-1,0) circle [radius=0.025];
				\draw[fill] (0,0) circle [radius=0.025];
				\draw[fill] (1/2,1/2) circle [radius=0.025];
				\draw[fill] (1/2,3/2) circle [radius=0.025];;
			\end{tikzpicture}
			&
			\begin{tikzpicture}[scale=1.5]
				\draw[->, line width=0.3mm] (-1,0) -- (0,0);
				\node[below] at (-1/2,0) {$\nabla f^k_{i-1}$};
				\draw[->, line width=0.3mm] (0,0) -- (-1/2,1/2);
				\node[right] at (-1/4,1/4) {$\nabla f^k_{i}$};
				\draw[->, line width=0.3mm] (-1/2,1/2) -- (-1/2,3/2) node[above] {};
				\node[left] at (-1/2,1) {$\nabla f^k_{i+1}$};
				\draw[fill] (-1,0) circle [radius=0.025];
				\draw[fill] (0,0) circle [radius=0.025];
				\draw[fill] (-1/2,1/2) circle [radius=0.025];
				\draw[fill] (-1/2,3/2) circle [radius=0.025];
			\end{tikzpicture}
			&
			\begin{tikzpicture}[scale=1.5]
				\draw[->, line width=0.3mm] (-1,0) -- (0,0);
				\node[above] at (-1/2,0) {$\nabla f^k_{i-1}$};
				\draw[->, line width=0.3mm] (0,0) -- (-4/3,-1/2);
				\node[below] at (-4/6,-1/4) {$\nabla f^k_{i}$};
				\draw[->, line width=0.3mm] (-4/3,-1/2) -- (-4/3,1/2) node[above] {};
				\node[left] at (-4/3,0) {$\nabla f^k_{i+1}$};
				\draw[fill] (-1,0) circle [radius=0.025];
				\draw[fill] (0,0) circle [radius=0.025];
				\draw[fill] (-4/3,-1/2) circle [radius=0.025];
				\draw[fill] (-4/3,1/2) circle [radius=0.025];
			\end{tikzpicture}
			&
			\begin{tikzpicture}[scale=1.5]
				\draw[->, line width=0.3mm] (-1,0) -- (0,0);
				\node[above] at (-1/2,0) {$\nabla f^k_{i-1}$};
				\draw[->, line width=0.3mm] (0,0) -- (1/2,-1/2);
				\node[below] at (1/4,-1/4) {$\nabla f^k_{i}$};
				\draw[->, line width=0.3mm] (1/2,-1/2) -- (1/2,1/2) node[above] {};
				\node[right] at (1/2,0) {$\nabla f^k_{i+1}$};
				\draw[fill] (-1,0) circle [radius=0.025];
				\draw[fill] (0,0) circle [radius=0.025];
				\draw[fill] (1/2,-1/2) circle [radius=0.025];
				\draw[fill] (1/2,1/2) circle [radius=0.025];
			\end{tikzpicture}
			\\
			(a) & (b) & (c) & (d)
		\end{tabular}
		\caption{Four cases where $\nabla f^k_{i-1},\nabla f^k_{i+1}$ are linearly independent and $\nabla f^k_i$ falls into the first, second, third or fourth quadrant (from the left to the right).}
		\label{fig:quadrants}
	\end{figure}

	If $\nabla f^k_{i-1},\nabla f^k_{i+1}$ are linearly dependent, then $\nabla f^k_{i}$ is also proportional to them by \eqref{eq:in_plane}, and the four data points are collinear. This happens when $F\in\mathcal{L}$, for instance, $F(x) = (x^2,x^2)$.
We would also like to emphasize that if $\nabla f^k_i = 0$, then $\nabla f^k_{i-1},\nabla f^k_{i+1}$ are linearly dependent with opposite directions according to \eqref{eq:in_plane}.

It becomes impossible to determine $F(\xi^{k+1}_{2i})$ and $F(\xi^{k+1}_{2i+1})$ in the last case:
Since $\nabla f^k_{i-1},\nabla f^k_{i+1}$ are linearly dependent, the rank of the system \eqref{eq:annihilation} is one, and only one equation (one coordinate) is sufficient to represent the system information. This is insufficient for reproducing the data,
and neither this scheme nor any other one can reproduce the data without knowing $\alpha^k_i,\beta^k_i$, as we demonstrate in the following example: Let us consider the family of polynomials
\[
\left \{p(x) = \frac{x(x-1)}{\alpha(\alpha+1)} : \alpha>0\right \} \subset \Pi_2^1.
\]
All of them fulfil that
\[
p(-\alpha) = 1, \quad p(0) = 0, \quad p(1) = 0, \quad p(\alpha) = 1,
\]
for some $\alpha>0$.
Consequently, provided the four data points $1, 0, 0, 1,$ it becomes impossible to determine $\alpha$ and, by extension, $p(\frac14)$ and $p(\frac34)$. For an example in $\Pi_2^n$, $n\geq 2$, just consider the proposed family of polynomials in each coordinate.
\end{rmk}

\begin{rmk}
	As a consequence of Theorem \ref{teo:scheme_reproduction}, we can prove that the scheme is not $\cC^2$, for any $\rho>0$. For $\rho=0$, it is the linear scheme proposed in \cite{DFH04} which is $\cC^2$.
	Let us consider the initial grid $\xi^0_i = (1+\rho)^i$, $i\in\Z$, so that
	\[
	\alpha^0_i = \frac{\nabla \xi^0_{i+1}}{\nabla \xi^0_{i}} = 1+\rho.
	\]
	Let us consider the initial data $f^0 = F|_{\xi^0}$ with $F(x) = (x,x^2)$.
	
	On the one hand, by Theorem \ref{teo:scheme_reproduction} and Proposition \ref{prop:step_implies_limit}, $S^\infty(f^0) = F\circ\sigma$, where $\sigma$ is the limit function of the Chaikin's scheme applied to $\xi^0$, which is $\cC^{2-}$. More precisely, $\sigma$ is a quadratic spline whose second derivative is piecewise constant, with jumps at the integers. Thus, $S^\infty(f^0) = F\circ\sigma = (\sigma,\sigma^2)$ is also $\mathcal{C}^{2-}$.
	
	On the other hand, the limit curve is precisely a parabola, which is infinitely smooth. This example underscores that the perception of `smoothness' depends on whether we treat the limit as a function or a curve. It emphasizes that a smooth curve does not necessarily imply a smooth function (its parametrization). Conversely, the cycloidal curve serves as a classical illustration which demonstrates that a smooth parametrization does not guarantee a smooth curve.
	
	We show in the numerical experiments of Section \ref{sec:numerical} that the scheme generates curves with continuous curvature. We hope that this could be theoretically established in the future. For the moment, we prove in Section \ref{sec:convergence_asymptotic} that it is $\cC^1$ for any initial data and any choice of $\rho\geq 0$.
	
	This example also demonstrates that the regularity of the scheme is constrained by the regularity of the implicit mesh refinement. So, replacing the Chaikin's scheme with some other smoother and monotonicity preserving scheme may be advantageous.
\end{rmk}

In the following proposition, we establish that the scheme commutes with similarity transformations, including rotations, translations, scalings, and reflections. This property is desirable for CAD applications, ensuring consistent outcomes when applying these standard transformations to the data. It is noteworthy that any similarity transformation is an affine transformation, so the next result is weaker than the one in Remark \ref{rmk:lagrange_affine}.

\begin{prop} \label{prop:similarity_transformations}
	For any $\rho\geq0$, the subdivision scheme in Definition \ref{defi:scheme} commutes with similarity transformations.
\end{prop}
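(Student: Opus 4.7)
The plan is to reduce the statement to Remark \ref{rmk:lagrange_affine}, exploiting the fact that the only grid parameters inside the nonlinear rule are $\alpha^k,\beta^k$, which are computed from the data via the composition $(\alpha^k,\beta^k)\circ(A,B)$ applied to $(\nabla f_{-1},\nabla f_0,\nabla f_1)$. Since $\Lambda_l[\alpha,\beta]$ already commutes with every affine transformation for any fixed $\alpha,\beta$, it suffices to show that the particular values of $\alpha^k,\beta^k$ selected by the scheme are unchanged when the data is subjected to a similarity.

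First I would write an arbitrary similarity as $T(x)=\lambda R x+c$ with $\lambda>0$, $c\in\R^n$, and $R\in\R^{n\times n}$ orthogonal, i.e.\ $R^T R = I$. For the induced sequence map $\mathcal{T}f:=(T(f_i))_{i\in\Z}$, the constant $c$ cancels in differences, yielding $\nabla(\mathcal{T}f)_j=\lambda R\,\nabla f_j$.

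The key observation is that $A$ and $B$ in \eqref{eq:AB} depend on their vector inputs only through the scalar inner products $(\delta_i)^T\delta_j$: every entry of the two $2\times 2$ matrices defining $\text{num}$ and $\text{den}$ is such a product. Under $\delta_j\mapsto \lambda R\delta_j$ one has
\[
(\lambda R\delta_i)^T(\lambda R\delta_j)=\lambda^2\,(\delta_i)^T R^T R\,\delta_j=\lambda^2\,(\delta_i)^T\delta_j,
\]
so both $2\times 2$ matrices are multiplied entrywise by $\lambda^2$, hence $\text{num}$ and $\text{den}$ are both scaled by $\lambda^4$. This preserves the vanishing test $\text{num}\cdot\text{den}=0$ as well as the absolute value of the quotient in the generic branch, so $A$ and $B$ take identical values on $(\nabla f_{-1},\nabla f_0,\nabla f_1)$ and on $(\nabla(\mathcal{T}f)_{-1},\nabla(\mathcal{T}f)_0,\nabla(\mathcal{T}f)_1)$. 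Since $\alpha^k,\beta^k$ are functions of $A,B$ alone (the truncation bounds depend only on $k$ and $\rho$), they inherit the same invariance.

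Denoting by $\alpha^*,\beta^*$ these common values, the rule for the transformed data reduces to $\Lambda_l[\alpha^*,\beta^*]$ acting on $(\mathcal{T}f_{i-1},\ldots,\mathcal{T}f_{i+2})$; by Remark \ref{rmk:lagrange_affine} this equals $T\bigl(\Lambda_l[\alpha^*,\beta^*](f_{i-1},\ldots,f_{i+2})\bigr)=T\bigl(\Psi^k_l(f_{i-1},\ldots,f_{i+2})\bigr)$, giving $S_k\mathcal{T}=\mathcal{T}S_k$ for every $k$. The only real obstacle is noticing that $A,B$ are expressible purely through inner products; once that is in hand the argument is a one-line invariance check followed by an appeal to Remark \ref{rmk:lagrange_affine}.
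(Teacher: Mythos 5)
Your proof is correct and follows essentially the same route as the paper: reduce to the affine-equivariance of $\Lambda_l[\alpha,\beta]$ (Remark \ref{rmk:lagrange_affine}) after checking that $A,B$, and hence $\alpha^k,\beta^k$, are invariant under the similarity. The only difference is that you spell out the invariance of $A,B$ (both $\text{num}$ and $\text{den}$ scale by $\lambda^4$ since their entries are inner products), a step the paper merely declares ``straightforwardly verified.''
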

\begin{proof}
	Similarly to Remark \ref{rmk:lagrange_affine}, we need to prove that if a similarity transformation is applied to the data, i.e.
	\begin{equation} \label{eq:prop:similarity_transformations}
		\bar f_i := \mu M f_i + c, \qquad \forall i=-1,0,1,2,
	\end{equation}
	being $\mu\in\R$, $c\in\R^n$ and $M\in\R^{n\times n}$ an orthogonal matrix, then
	\begin{equation} \label{eq:prop:similarity_transformations:rule}
		\Psi_l^k(\bar f_{-1},\bar f_{0},\bar f_{1},\bar f_{2}) = \mu M \Psi^k_l(f_{-1},f_{0},f_{1},f_{2}) + c, \quad l=0,1.
	\end{equation}

	The case $\mu = 0$ is trivial since $\Lambda_l[\alpha,\beta]$ reproduces constants for any $\alpha,\beta>0$, $l=0,1$.
	For $\mu\neq 0$, first observe that
	\[ \nabla \bar f_i = \mu M \nabla f_i, \qquad \forall i=-1,0,1,2. \]
	Then, looking at \eqref{eq:AB}, it is straightforwardly verified that
	\[
		(A,B)(\nabla f_{-1},\nabla f_{0},\nabla f_{1}) = (A,B)(\nabla \bar f_{-1},\nabla \bar f_{0},\nabla \bar f_{1}).
	\]
	Consequently,
	\[\Lambda_{l}[((\alpha^k,\beta^k)\circ(A,B))(\nabla f_{-1},\nabla f_{0},\nabla f_{1})]
	= \Lambda_{l}[((\alpha^k,\beta^k)\circ(A,B))(\nabla \bar f_{-1},\nabla \bar f_{0},\nabla \bar f_{1})], \quad l=0,1.\]
	Then, \eqref{eq:prop:similarity_transformations:rule} is achieved provided that $\Lambda_l[\alpha,\beta]$, $l=0,1$, also fulfil \eqref{eq:prop:similarity_transformations:rule} for any $\alpha,\beta>0$, which certainly holds true by Remark \ref{rmk:lagrange_affine}.
\end{proof}

\section{Convergence by asymptotic equivalence and quasilinearity}	\label{sec:convergence_asymptotic}

In this section, we introduce a broad class of non-linear, non-stationary, uniform subdivision schemes, which includes the proposed scheme. We demonstrate that these schemes are $\mathcal{C}^1$ by utilizing two concepts: their quasilinearity (refer to \cite{CDM03}) and their asymptotic equivalence (refer to \cite{DLY14}) to linear, stationary, uniform schemes.
To apply the findings from these papers, we assume that the data is bounded, i.e., $f^0\in\liZn$, $S_k : \liZn \lra \liZn$, $k\in\N_0$. This is a common constraint in subdivision theory, which is readily met in practice due to the finite nature of the data.

Let us introduce each of these concepts.

\begin{defi}[Quasilinear subdivision scheme. Based on Definition 1 of \cite{CDM03}]
	A subdivision operator $S$ is \emph{data dependent} if for each $f\in\liZn$ there exists a linear subdivision operator $S[f]$ such that
	\(
	S f = S[f] f.
	\)
	A subdivision scheme is \emph{quasilinear} if each subdivision operator is data dependent.
\end{defi}
\begin{rmk}
	According to the Euler's theorem, any function $F:\R^n\lra\R$ that is 1-homogeneous and continuously differentiable fulfils \(F(x) = \langle\nabla F(x),x\rangle \). This can be applied to the subdivision rules to deduce that any 1-homogeneous subdivision scheme that admits generalized Jacobian (see \cite{DLS17} for further information) is quasilinear. Thus, quasilinearity is a very common property among the non-linear subdivision schemes in the literature.
\end{rmk}

\begin{rmk} \label{rmk:quasilinear_notation}
	Let $\cS = \{S_k\}_{k\in\N_0}$ be a quasilinear subdivision scheme. For a given $f^0\in\liZn$, consider $f^{k+1} = S_k f^k$, $k\in\N_0$. $S_k[f^k]$ denotes the linear subdivision operator such that $f^{k+1} = S_k[f^k] f^k$. We consider the linear (and possibly non-uniform) subdivision scheme $\cS[f^0] := \{S_k[f^k]\}_{k\in\N_{0}}$. Observe that, for each $f^0\in\liZn$, $\cS$ and $\cS[f^0]$ generate exactly the same sequences $\{f^k\}_{k\in\N_0}$. We will exploit this fact to examine the smoothness of $\cS$ through $\cS[f^0]$. For this analysis, we can apply the results for linear schemes, particularly those found in \cite{DLY14}.
\end{rmk}

\begin{prop} \label{prop:quasilinearCm}
	Let $\cS = \{S_k\}_{k\in\N_0}$ be a quasilinear subdivision scheme and let $m\in\N_0$ be. If $\cS[f^0]$ is $\cC^m$, $\forall f^0\in\liZn$, then $\cS$ is $\cC^m$.
\end{prop}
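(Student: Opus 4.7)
The plan is to unpack the hypothesis using the key identity from Remark \ref{rmk:quasilinear_notation}: for every $f^0 \in \liZn$, the iterates $f^{k+1} = S_k f^k$ produced by $\cS$ coincide with the iterates produced by the linear non-stationary scheme $\cS[f^0]$ starting from the same $f^0$. In other words, although $\cS[f^0]$ depends on the datum, as an input-output map seeded at $f^0$ it is indistinguishable from $\cS$ seeded at $f^0$. This is exactly what lets us transfer regularity results proved for linear schemes to the quasilinear setting.

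First, I would fix an arbitrary $f^0 \in \liZn$ and denote by $\{f^k\}_{k\in\N_0}$ the sequence generated by $\cS$. By quasilinearity, $S_k f^k = S_k[f^k] f^k$ for every $k\in\N_0$, so $\{f^k\}_{k\in\N_0}$ coincides with the sequence produced by the linear scheme $\cS[f^0]$ applied to $f^0$. The hypothesis states that $\cS[f^0]$ is $\cC^m$; in particular it is uniformly convergent, so $\{f^k\}_{k\in\N_0}$ converges uniformly to the limit function $S[f^0]^\infty f^0 \in \cC^m(\R,\R^n)$. Defining $S^\infty f^0 := S[f^0]^\infty f^0$, we obtain that $\cS$ converges on $f^0$ with a $\cC^m$ limit.

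Second, since $f^0$ was arbitrary, we have that for every initial datum the scheme $\cS$ converges uniformly to a function in $\cC^m(\R,\R^n)$. To complete convergence of $\cS$ in the sense of the paper's definition, it remains to check the non-triviality clause $S^\infty \neq 0$. This follows immediately by choosing any $f^0$ for which the linear scheme $\cS[f^0]$ produces a non-zero limit, for instance any non-zero constant sequence when the underlying linear scheme reproduces constants; existence of such an $f^0$ is guaranteed by the hypothesis that each $\cS[f^0]$ is itself convergent and thus non-trivial.

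The main point to watch is conceptual rather than computational: one must resist viewing $\cS[f^0]$ as a single global linear scheme and instead use it only as a bookkeeping device for the one orbit starting at $f^0$. Once this is accepted, the proposition reduces to an application of definitions, and no real estimate is required; the technical work has been deferred entirely into the hypothesis that $\cS[f^0]$ is $\cC^m$, which is where the actual analysis (via asymptotic equivalence, as announced in Section \ref{sec:convergence_asymptotic}) will have to be carried out.
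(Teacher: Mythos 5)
Your proof is correct and takes essentially the same approach as the paper: fix $f^0$, note that quasilinearity makes the orbit of $\cS$ from $f^0$ identical to that of the linear scheme $\cS[f^0]$, and transfer the $\cC^m$ limit. The paper's proof is exactly this two-line observation (it does not even discuss the non-triviality clause $S^\infty\neq 0$, which you handle somewhat loosely but which is a side issue here).
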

\begin{proof}
	For each $f^0\in\liZn$, the subdivision schemes $\cS$ and $\cS[f^0]$ produce exactly the same sequence $\{f^k\}_{k\in\N_{0}}$. Since $\cS[f^0]$ converges to a $\cC^m$ function, $\cS$ also converges to that $\cC^m$ function when starting from $f^0$.
\end{proof}

We now revisit the property of asymptotic equivalence, which was previously examined for linear, non-uniform, non-stationary subdivision schemes in \cite{DLY14}. This property has proven to be effective in the analysis of subdivision schemes that reproduce extended Chebyshev systems. The findings presented in that paper are exclusively concerned with linear scalar-valued subdivision schemes, i.e., $n=1$. However, by applying these results to each component individually, they also hold true for linear vector-valued scalar-based subdivision schemes.

Similarly to \cite{DLY14}, given a linear scalar-based subdivision scheme $\cS = \{S_k\}_{k\in\N_0}$, we denote by $b^{i,k}\in\ell(\Z,\R)$, $i\in\Z$, $k\in\N_0$, some \emph{masks} that allow to compute the refinement as follows:
\[
(S_{k} f^{k})_i = \sum_{j\in\Z} b^{i,k}_{i-2j} f^{k}_j, \quad i\in\Z, \ f^k\in\liZn.
\]
Observe that only half of the coefficients of each mask are used (the even or the odd ones, depending on the parity of $i$), so that the unused half of $b^{i,k}$ can be defined in several ways, for instance, in some convenient manner for a specific purpose (as in Theorem \ref{teo:nonlinear_asymtotic_equivalent}).

The next two properties link the convergence and regularity of two linear subdivision schemes.
\begin{defi}[Asymptotically equivalent. Based on Section 3.2 of \cite{DLY14}]
	Let $\cS = \{S_k\}_{k\in\N_0},\tilde \cS = \{\tilde S_k\}_{k\in\N_0}$ be two linear scalar-based subdivision schemes. They are \emph{asymptotically equivalent} if
	\[
	\sum_{k=0}^\infty \sup_{i\in\Z} \{\|b^{i,k} - {\tilde b}^{i,k}\|\} < \infty,
	\]
	where $\|\cdot\|$ is any norm for the masks.
\end{defi}

\begin{defi}[Property A. Based on Definition 4.1 of \cite{DLY14}]
	Consider the Laurent polynomials
	\[
	b^{i,k}(z) := \sum_{j\in\Z} b^{i,k}_j z^j, \quad d^{i,k}_m(z) := \sum_{j=0}^m (-1)^j \binom{m}{j} z^j b^{i-j,k}(z), \quad m\in\N, \ i\in\Z, \ k\in\N_0.
	\]
	A linear scalar-based scheme $\cS$ satisfies \emph{property A of order $m$} if
	\[
	\sum_{k=0}^\infty 2^{k(m-r)}\left | \frac{\partial^r d^{i,k}_m}{\partial z^r} (\pm 1) \right | < \infty, \qquad 0\leq r< m, \quad i\in\Z.
	\]
\end{defi}

Now, we propose a straightforward result that links Property A with the reproduction of constants.
\begin{lem} \label{lem:sum_one_implies_d0}
If $\sum_{j\in\Z}b^{i,k}_{2j}=\sum_{j\in\Z}b^{i,k}_{1+2j} = 1$, $\forall i\in\Z$, then $d^{i,k}_m(\pm 1) = 0$, $\forall i\in\Z$, $\forall m\in\N$.
\end{lem}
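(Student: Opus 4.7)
The plan is to evaluate $b^{i,k}(z)$ at $z = \pm 1$ using the hypothesis, and then substitute into the definition of $d^{i,k}_m$.

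First I would compute $b^{i,k}(1)$ and $b^{i,k}(-1)$ directly. Splitting the sum $b^{i,k}(1) = \sum_{j \in \Z} b^{i,k}_j$ into even and odd indices gives $1 + 1 = 2$ by the two hypotheses. Similarly $b^{i,k}(-1) = \sum_{j \in \Z} b^{i,k}_j (-1)^j$ splits as $1 - 1 = 0$. The key point is that the right-hand values do not depend on the index $i$ (in particular, not on $i - j$).

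Next I would plug these into the formula $d^{i,k}_m(z) = \sum_{j=0}^m (-1)^j \binom{m}{j} z^j b^{i-j,k}(z)$. At $z = 1$, each factor $b^{i-j,k}(1) = 2$ pulls out, leaving the binomial identity $\sum_{j=0}^m (-1)^j \binom{m}{j} = (1-1)^m = 0$ for $m \geq 1$. At $z = -1$, every factor $b^{i-j,k}(-1) = 0$ already kills every summand.

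There is no real obstacle here; the statement is a direct consequence of recognizing that $b^{i,k}(\pm 1)$ are fixed constants under the hypothesis, together with the binomial identity $(1-1)^m = 0$. The only thing to double-check is that the statement is for $m \in \N$ (so $m \geq 1$), which is precisely what is needed for the $z = 1$ case to vanish.
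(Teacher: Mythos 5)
Your proposal is correct and follows exactly the paper's argument: both compute $b^{i,k}(1)=2$ and $b^{i,k}(-1)=0$ from the even/odd sum hypothesis, then conclude via the binomial identity $\sum_{j=0}^m(-1)^j\binom{m}{j}=0$ at $z=1$ and the vanishing of each factor at $z=-1$. No gaps.
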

\begin{proof}
The hypothesis implies that the symbols of the masks fulfil that $b^{i,k}(1) = 2$ and $b^{i,k}(-1) = 0$. Then
\begin{align*}
	d^{i,k}_m(1) &= \sum_{j=0}^m (-1)^j \binom{m}{j}\cdot 1\cdot 2 = 0, \quad
	d^{i,k}_m(-1) = \sum_{j=0}^m (-1)^j \binom{m}{j} (-1)^j\cdot 0 = 0.
\end{align*}
\end{proof}

\begin{teo}[Proposition 3.2 of \cite{DLY14} and the latter comment] \label{teo:Am_implies_C0}
	Let $\cS,\tilde \cS$ be two asymptotically equivalent subdivision schemes, being $\tilde \cS$ uniform and stationary. If $\tilde \cS$ converges, then $\cS$ does.
\end{teo}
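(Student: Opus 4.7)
The plan is to reduce the convergence of $\cS$ to that of $\tilde\cS$ by a perturbation argument, following the classical asymptotic-equivalence technique. I would first harvest the hypothesis on $\tilde\cS$: because it is linear, uniform, stationary and convergent to a non-zero continuous limit, standard subdivision theory guarantees the existence of an integer $N\geq1$ and of constants $M\geq1$, $\mu\in(0,1)$ with
\[
\|\tilde S g\|_\infty \leq M\|g\|_\infty, \qquad \|\nabla \tilde S^N g\|_\infty \leq \mu\|\nabla g\|_\infty, \qquad \forall g\in\liZn.
\]
The second inequality is the usual characterization of $\cC^0$ convergence via a contractive iterated difference scheme. Next, I would translate the asymptotic equivalence into operator-norm statements: summability of $\sup_i\|b^{i,k}-\tilde b^{i,k}\|$ gives sequences $\epsilon_k, \tilde\epsilon_k\geq 0$ with $\sum_k(\epsilon_k+\tilde\epsilon_k)<\infty$ such that $\|S_k-\tilde S\|_{\infty\to\infty}\leq\epsilon_k$ and $\|\nabla(S_k-\tilde S)g\|_\infty\leq\tilde\epsilon_k\|g\|_\infty$.

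Writing $S_k=\tilde S+E_k$ and expanding the composition $S_{k-1}\cdots S_0$ as $\tilde S^k$ plus mixed terms involving at least one $E_j$, I would derive a pair of coupled recursions for $U_k:=\|f^k\|_\infty$ and $V_k:=\|\nabla f^k\|_\infty$ of the form
\[
U_{k+1}\leq (M+\epsilon_k)U_k, \qquad V_{k+N}\leq \mu V_k + C\sum_{j=k}^{k+N-1}\tilde\epsilon_j\,U_j,
\]
with a constant $C$ depending only on $N$ and $M$. A Grönwall/telescoping argument, using $\sum_k\epsilon_k<\infty$, first yields a uniform bound $U_k\leq U_\infty$; feeding this back into the second recursion and using $\sum_k\tilde\epsilon_k<\infty$ gives $V_k\to0$, in fact geometrically modulo a summable perturbation. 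This is the heart of the proof and the step where asymptotic equivalence is genuinely used.

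Finally, I would conclude convergence by the standard Cauchy argument on the piecewise linear interpolants $P_k f^k$ at the nodes $i\,2^{-k}$. The mask-coefficients of $S_k$ that are needed to ensure $\|P_{k+1}f^{k+1}-P_k f^k\|_\infty\lesssim V_k+\epsilon_k U_k$ are themselves close to those of $\tilde S$, which sum to the values required for $\cC^0$ consistency; the error made by this closeness is summable. Hence $\sum_k\|P_{k+1}f^{k+1}-P_k f^k\|_\infty<\infty$, so $\{P_k f^k\}$ is uniformly Cauchy and converges to some continuous limit, which is the desired limit of $\cS$. The main obstacle is the coupled recursion in the second step: the contraction of $V_k$ depends on the boundedness of $U_k$, while a naive bound on $U_k$ could a priori grow geometrically; the summability of the perturbations $\epsilon_k$ is exactly what breaks this feedback loop and allows both quantities to be controlled simultaneously.
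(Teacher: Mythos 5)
The paper does not actually prove this statement: it is imported verbatim as Proposition 3.2 of \cite{DLY14} (together with the comment following it there), so there is no internal proof to compare against. Your sketch reconstructs the classical Dyn--Levin perturbation argument that underlies the cited result: contractivity of an iterated difference scheme of $\tilde S$, summable operator-norm perturbations $E_k := S_k - \tilde S$, and a Cauchy argument on piecewise linear interpolants. That is the right architecture, and the steps translating mask summability into $\|E_k\|_{\infty\to\infty}\leq\epsilon_k$ and $\|\nabla E_k g\|_\infty\leq 2\epsilon_k\|g\|_\infty$, the $N$-step contraction for $V_k$, and the final telescoping are sound.

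There is, however, a genuine flaw in the middle step. The recursion $U_{k+1}\leq(M+\epsilon_k)U_k$ gives only $U_k\leq U_0\prod_{j<k}(M+\epsilon_j)$, which diverges geometrically whenever $M>1$ --- and $M>1$ is the generic case, since the norm of a convergent subdivision operator need not be at most $1$ (the four-point scheme is a standard example). Summability of $\epsilon_k$ does not repair this; what does is the uniform power-boundedness $\sup_j\|\tilde S^j\|_\infty=:M_\infty<\infty$, itself a consequence of the convergence of $\tilde S$ (bounded data produce uniformly bounded iterates). You must therefore actually use the expansion you allude to, $f^k=\tilde S^kf^0+\sum_{j<k}\tilde S^{\,k-1-j}E_jf^j$, to obtain $U_k\leq M_\infty U_0+M_\infty\sum_{j<k}\epsilon_jU_j$, and then a discrete Gr\"onwall bound $U_k\leq M_\infty U_0\prod_{j<k}(1+M_\infty\epsilon_j)$, which is finite precisely because $\sum_j\epsilon_j<\infty$. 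With that correction the rest of your argument (feeding the uniform bound on $U_k$ into the $V_k$ recursion, deducing $\sum_kV_k<\infty$, and the Cauchy argument on $P_kf^k$) goes through. One remaining detail: the paper's definition of convergence also demands $S^\infty\neq0$, which your argument does not address; it requires a separate (short) observation and is part of what the cited proposition supplies.
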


\begin{teo}[Theorem 4.2 of \cite{DLY14}] \label{teo:Am_implies_Cm}
Let $\cS,\tilde \cS$ be two asymptotically equivalent subdivision schemes, being $\tilde \cS$ uniform, stationary and $\cC^m$, for some $m\in\N$. If $\cS$ satisfies property A of order $m$, then $\cS$ is $\cC^m$.
\end{teo}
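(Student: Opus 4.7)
The plan is to prove $\cC^m$ regularity by analysing the $m$-th order differences of the iterates. Classical subdivision theory tells us that a linear scheme is $\cC^m$ essentially when its iterated $m$-th differences decay like $2^{-mk}$, and the Laurent polynomials $d^{i,k}_m(z)$ appearing in the definition of Property A are exactly the symbols governing how the $m$-th differences at level $k+1$ are produced from the data at level $k$: when $d^{i,k}_m(z)$ vanishes to order $m$ at both $z=1$ and $z=-1$, it factorises through $(z^2-1)^m$ and yields a genuine $m$-th difference rule on the next refinement level.

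First I would exploit the $\cC^m$ hypothesis on the stationary uniform reference scheme $\tilde\cS$. Since $\tilde\cS$ is linear, uniform, stationary and $\cC^m$, standard results on stationary schemes furnish an $m$-th difference symbol $\tilde d_m(z)$ that generates a stationary scheme with joint spectral radius strictly smaller than $2^{-m}$, so the iterates satisfy $\|\nabla^m \tilde f^k\|_\infty = O(2^{-mk})$ uniformly, and the derivatives $\partial^r \tilde d_m/\partial z^r(\pm 1)$ vanish for $0\le r<m$.

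Next, the two hypotheses combine to transfer this rate to $\cS$. Asymptotic equivalence provides $\sum_k \sup_i \|b^{i,k}-\tilde b^{i,k}\|<\infty$, while Property A of order $m$ bounds $\sum_k 2^{k(m-r)}\bigl|\partial^r d^{i,k}_m(\pm 1)/\partial z^r\bigr|$ uniformly in $i$ for $0\le r<m$. The latter sums measure the obstructions preventing $d^{i,k}_m$ from factoring as a clean $m$-th difference symbol; the weights $2^{k(m-r)}$ match the $2^{-k}$ grid rescaling at level $k$ when $d^{i,k}_m$ is Taylor-expanded about $z=\pm 1$. Combining these estimates with the corresponding ones for $\tilde d_m$ yields a perturbed contraction inequality of the form
\[
\|\nabla^m f^{k+1}\|_\infty \le 2^{-m}\|\nabla^m f^k\|_\infty + \varepsilon_k \|f^0\|_\infty,
\]
with $\sum_k \varepsilon_k 2^{mk} < \infty$. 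Since Theorem \ref{teo:Am_implies_C0} already guarantees $\cC^0$ convergence of $\cS$ and hence uniform boundedness of $\|f^k\|_\infty$, iterating the inequality gives $\|\nabla^m f^k\|_\infty = O(2^{-mk})$. A standard Bernstein-type argument then promotes this decay to $\cC^m$ regularity of the limit $S^\infty f^0$.

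I expect the main obstacle to lie in the second step: rigorously extracting the perturbed contraction inequality from Property A. The weights $2^{k(m-r)}$ arise naturally from rescaling space by $2^{-k}$ at iteration $k$ and must be matched exactly by the Taylor expansion of $d^{i,k}_m$ around $z=\pm 1$, with all estimates uniform in the spatial index $i$; extra care will be needed because $\cS$ is genuinely non-uniform and non-stationary, so the masks $b^{i,k}$ vary both with $i$ and with $k$, and the contractivity of the $m$-th difference recursion must be recovered coordinate-wise along the iteration rather than from a single stationary symbol.
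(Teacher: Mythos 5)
This statement is imported verbatim from the literature: the paper states it as Theorem 4.2 of \cite{DLY14} and supplies no proof of its own, so there is no internal argument to compare yours against. Judged on its own terms as a reconstruction of the external proof, your sketch has the right philosophy (control the behaviour of differences via the symbols $d^{i,k}_m$, use asymptotic equivalence to borrow the good behaviour of $\tilde\cS$), but it contains a genuine gap in two places.

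First, the step you yourself flag as the main obstacle — extracting the ``perturbed contraction inequality'' from Property A — is the entire content of the theorem, and it is not merely a technical verification. The weights $2^{k(m-r)}$ in Property A are calibrated so that the \emph{divided-difference (derived) schemes} of orders $1,\dots,m$ of $\cS$ exist up to summable corrections and are asymptotically equivalent to the corresponding derived schemes of $\tilde\cS$; the argument in \cite{DLY14} is an induction on the order of the derived scheme, not a single one-step inequality on $\nabla^m f^k$. Asserting the inequality with the exact factor $2^{-m}$ and an error $\varepsilon_k\|f^0\|_\infty$ is not justified for a non-uniform, non-stationary family of masks, and no mechanism is given for producing it. Second, and more decisively, even if you had $\|\nabla^m f^k\|_\infty = O(2^{-mk})$, this does not yield $\cC^m$: it yields only that the $(m-1)$-st derivative of the limit is Lipschitz (i.e.\ $\cC^{m-1,1}$). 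To conclude that the $m$-th derivative exists and is \emph{continuous}, one must show that the $m$-th derived scheme itself \emph{converges} (to a continuous function which is then identified with the $m$-th derivative), which is what the asymptotic-equivalence machinery delivers: the $m$-th derived scheme of $\tilde\cS$ converges because $\tilde\cS$ is $\cC^m$, and the $\cC^0$ transfer result (the analogue of Theorem \ref{teo:Am_implies_C0} at the level of derived schemes) passes this convergence to $\cS$. Your Bernstein-type promotion from difference decay to $\cC^m$ is therefore the wrong closing move; the proof has to run through convergence of derived schemes rather than through decay rates of differences.
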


Now, we show how to establish a connection between the concepts of quasilinearity and asymptotic equivalence.

\begin{cor} \label{cor:quasilinear_propertyA_implies_Cm}
Let $\cS$ be a quasilinear subdivision scheme. Let $\tilde \cS$ be a convergent, linear, uniform, stationary and scalar-based subdivision scheme. 
If $\cS[f^0]$ is scalar-based and asymptotically equivalent to $\tilde \cS$, $\forall f^0\in\liZn$, then $\cS$ is convergent.
In addition, if $\cS[f^0]$ satisfies property $A$ of order $m$, $\forall f^0\in\liZn$, for some $m\in\N$, and $\tilde \cS$ is $\cC^m$, then $\cS$ is $\cC^m$.
\end{cor}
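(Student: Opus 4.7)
The plan is to chain together the three previously stated ingredients (Proposition \ref{prop:quasilinearCm}, Theorem \ref{teo:Am_implies_C0}, and Theorem \ref{teo:Am_implies_Cm}) by passing through the linearized scheme $\cS[f^0]$ afforded by quasilinearity. Since $\cS$ and $\cS[f^0]$ produce the same iterates starting from $f^0$ (Remark \ref{rmk:quasilinear_notation}), every convergence/regularity property of the linear scheme $\cS[f^0]$ transfers to the nonlinear scheme $\cS$ on the orbit of $f^0$. Because the hypotheses of the corollary hold for every $f^0\in\liZn$, we recover the corresponding property of $\cS$ globally.

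First I would fix an arbitrary $f^0\in\liZn$ and invoke the hypothesis that $\cS[f^0]$ is scalar-based and asymptotically equivalent to $\tilde \cS$. Since $\tilde \cS$ is convergent, linear, uniform and stationary, Theorem \ref{teo:Am_implies_C0} applies directly to the pair $(\cS[f^0],\tilde \cS)$, yielding convergence of $\cS[f^0]$. As $f^0\mapsto \{f^k\}_{k\in\N_0}$ coincides for $\cS$ and $\cS[f^0]$, it follows that the iterates generated by $\cS$ from $f^0$ converge uniformly to a continuous function, proving the convergence statement of the corollary.

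For the second assertion, I would again fix $f^0\in\liZn$ and now invoke the stronger hypothesis that $\cS[f^0]$ satisfies property $A$ of order $m$, together with the assumption that $\tilde \cS$ is $\cC^m$. These are exactly the hypotheses of Theorem \ref{teo:Am_implies_Cm} applied to $(\cS[f^0],\tilde \cS)$, whose conclusion is that $\cS[f^0]$ is $\cC^m$. Since this holds for every $f^0\in\liZn$, Proposition \ref{prop:quasilinearCm} then lifts $\cC^m$ regularity from the family of linearizations $\{\cS[f^0]\}_{f^0}$ to the nonlinear scheme $\cS$ itself.

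There is no real obstacle: the corollary is essentially a bookkeeping result that packages the quasilinear reduction together with the asymptotic equivalence framework from \cite{DLY14}. The only point that deserves a brief remark is that the hypotheses must be verified \emph{uniformly in $f^0$} in the sense that they hold for every initial datum (not with constants uniform in $f^0$), which is precisely what is needed for Proposition \ref{prop:quasilinearCm} to conclude that $\cS$, as a single nonlinear scheme, is $\cC^m$ on all of $\liZn$.
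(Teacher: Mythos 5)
Your proposal is correct and follows exactly the paper's argument: apply Theorem \ref{teo:Am_implies_C0} (respectively Theorem \ref{teo:Am_implies_Cm}) to each pair $(\cS[f^0],\tilde\cS)$ and then transfer the conclusion back to $\cS$ via Proposition \ref{prop:quasilinearCm}, using that $\cS$ and $\cS[f^0]$ generate the same iterates. No gaps.
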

\begin{proof}
	First, apply Theorem \ref{teo:Am_implies_C0} (and Theorem \ref{teo:Am_implies_Cm}) to obtain that $\cS[f^0]$ is convergent (and $\cC^m$), $\forall f^0\in\liZn$. Second, apply Proposition \ref{prop:quasilinearCm} to deduce that $\cS$ is convergent (and $\cC^m$) as well.
\end{proof}

Next, we propose a broad class of subdivision schemes, ours included, and apply Corollary \ref{cor:quasilinear_propertyA_implies_Cm} to achieve the convergence of any scheme within this class. We believe this could be useful in the study of future non-linear non-stationary subdivision schemes.

\begin{defi}[Class B] \label{defi:classB}
	A subdivision scheme $\cS$ belongs to the \emph{class B} iff it is uniform and there exist $q\in\N$, $D\subset \R^q$, $0\in D$, $\pi_k:(\R^n)^{2p+2}\lra D$, $\vec\Lambda_{0},\vec\Lambda_{1}: D \lra \R^{2p+2}$, $\rho,C> 0$ such that
	\begin{align*}
			\Psi^k_l(f_{-p},\ldots,f_{p+1}) &=  \langle \vec\Lambda_{l}(\pi_k(f_{-p},\ldots,f_{p+1})),(f_{-p},\ldots,f_{p+1})\rangle, & l=0,1, \  \forall f_j\in\R^n, \ j=-p,\ldots,i+1,\\
			|\pi_k(f_{-p},\ldots,f_{p+1})|_{\R^q} &\leq 2^{-k} \rho, & \forall f_j\in\R^n, \ j=-p,\ldots,i+1,\\
			|\vec\Lambda_{l}(\pi) - \vec\Lambda_{l}(0)|_{\R^{2p+2}} &\leq  C|\pi|_{\R^q}, & \forall\pi\in D, \  l=0,1,
	\end{align*}
	for some vector norms $|\cdot|_{\R^q},|\cdot|_{\R^{2p+2}}$. We say that $\cS$ is \emph{asymptotically equivalent} to a linear uniform stationary subdivision scheme, that we denote $\cS^*$, whose rules are given by the coefficients $\vec\Lambda_{l}(0)$, $l=0,1$:
	\[
		\Psi^*_l(f_{-p},\ldots,f_{p+1}) =  \langle \vec\Lambda_{l}(0),(f_{-p},\ldots,f_{p+1})\rangle.
	\]
	We demand that $\cS^*$ is convergent.
\end{defi}

Next theorem justifies that the term \emph{asymptotically equivalent} in last definition is suitable.
\begin{teo} \label{teo:nonlinear_asymtotic_equivalent}
	If $\cS$ belongs to class B, then
	\begin{enumerate}
		\item $\cS$ is quasilinear
		\item The schemes $\cS[f^0]$ and $\cS^*$ are scalar-based, $\forall f^0 \in \liZn$.
		\item For a given $f^k\in\liZn$, the masks of $S_k[f^k]$, say $\{b^{i,k}\}_{i\in\Z}$, can be chosen as
		\begin{equation} \label{eq:mask_classB}
		(b^{2i,k}_{l-2j})_{j=-p}^{p+1} = \vec\Lambda_{l}(\pi_k(f^k_{i-p},\ldots,f^k_{i+p+1})), \qquad l=0,1,
		\end{equation}
		with $b^{2i,k}_{j} = 0$ if $j<-2p-2$ or $j>1+2p$,
		and $b^{2i+1,k} = b^{2i,k}$, $\forall i\in\Z$.
		\item $\cS[f^0]$ is asymptotically equivalent to $\cS^*$, $\forall f^0 \in \liZn$.
	\end{enumerate}
\end{teo}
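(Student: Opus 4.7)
The four items follow essentially from unfolding Definition \ref{defi:classB}, and my plan is to address them in the stated order because each subsequent item depends on the structure exposed by the previous one. For item 1, the defining formula
\[
\Psi^k_l(f_{-p},\ldots,f_{p+1}) = \langle \vec\Lambda_l(\pi_k(f_{-p},\ldots,f_{p+1})),(f_{-p},\ldots,f_{p+1})\rangle
\]
is already presented as a weighted sum of the stencil data; the only non-linearity lies in the dependence of the weights on the data through $\pi_k\circ\vec\Lambda_l$. Freezing these weights at the values they take on the concrete input $f$ at every position produces a bona fide linear operator $S_k[f]$ that agrees with $S_k$ when applied to $f$. For item 2, each component $[\vec\Lambda_l(\pi)]_j$ is a real scalar that multiplies the whole vector $f_j\in\R^n$ via the inner product, so the action is diagonal across coordinates of $\R^n$: each output coordinate depends only on the same input coordinate through identical scalar weights, which is precisely the scalar-based condition; the same reasoning applies verbatim to $\cS^*$.

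For item 3, I would match
\[
(S_k[f^k]g)_{2i+l} = \sum_{j'\in\Z} b^{2i+l,k}_{2i+l-2j'} g_{j'} \quad \text{with} \quad \sum_{j=-p}^{p+1} [\vec\Lambda_l(\pi_k(f^k_{i-p},\ldots,f^k_{i+p+1}))]_j g_{i+j}.
\]
Substituting $j' = i+j$ forces $b^{2i+l,k}_{l-2j} = [\vec\Lambda_l(\pi_k(\cdots))]_j$ for $j\in\{-p,\ldots,p+1\}$; the remaining entries of each mask never intervene in the refinement rule (only the even-indexed entries of $b^{2i,k}$ and the odd-indexed entries of $b^{2i+1,k}$ are actually used), so prescribing $b^{2i+1,k} = b^{2i,k}$ and setting the unused positions to zero is a harmless compact convention. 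This index bookkeeping is the only place where care is needed; everything else is routine.

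For item 4, I would combine the Lipschitz estimate $|\vec\Lambda_l(\pi) - \vec\Lambda_l(0)|_{\R^{2p+2}} \leq C|\pi|_{\R^q}$ from Definition \ref{defi:classB} with the uniform bound $|\pi_k(\cdots)|_{\R^q} \leq 2^{-k}\rho$. The masks $\tilde b^{i,k}$ of $\cS^*$ are obtained by replacing $\pi_k(\cdots)$ with $0$ in the formula of item 3, so the mask differences are controlled entrywise by $C 2^{-k}\rho$, uniformly in $i$ and in the choice of $f^0$. Passing to the mask norm via the equivalence of norms on $\R^{2p+2}$ gives $\sup_i \|b^{i,k} - \tilde b^{i,k}\| \leq C' 2^{-k}\rho$, and summing the resulting geometric series yields $\sum_{k\geq 0}\sup_i\|b^{i,k}-\tilde b^{i,k}\| \leq 2C'\rho < \infty$, which is the asymptotic equivalence. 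The convergence hypothesis on $\cS^*$ demanded at the end of Definition \ref{defi:classB} is not needed for the present theorem but is precisely what will be exploited later through Corollary \ref{cor:quasilinear_propertyA_implies_Cm}.
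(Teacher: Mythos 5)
Your proposal is correct and follows essentially the same route as the paper: freeze the data-dependent weights to get the linear operators $S_k[f^k]$, observe that scalar coefficients acting on vector data give the scalar-based property, perform the index matching $j'=i+j$ to read off the masks (exploiting that only half the entries of each mask are actually used, so the other half can be defined to enforce $b^{2i+1,k}=b^{2i,k}$), and combine the Lipschitz bound on $\vec\Lambda_l$ with $|\pi_k|\leq 2^{-k}\rho$ to sum a geometric series. Only be careful that the unused entries \emph{inside} the stencil window are filled with the other rule's coefficients (not with zeros) so that $b^{2i+1,k}=b^{2i,k}$ holds; zeros are assigned only outside the window, as in the statement.
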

\begin{proof}
It is quasilinear since $S_k f^k = S_k[f^k] f^k$ where $S_k[f^k]$ is precisely
\begin{equation} \label{eq:quasilinear_rule}
(S_k[f^k]g)_{2i+l} = \langle\vec\Lambda_{l}(\pi_k(f^k_{i-p},\ldots,f^k_{i+p+1})),(g_{i-p},\ldots,g_{i+p+1})\rangle, \quad l=0,1, \ g\in\liZn.
\end{equation}
Observe that the subdivision rules of $\cS[f^0]$ and $\cS^*$ consists of linear combinations (with scalar coefficients) of vector data. Thus, they are scalar-based.

Let us consider an arbitrary but fixed $f^0\in\liZn$. Using the notation in Remark \ref{rmk:quasilinear_notation}, we need to define $b^{i,k}$, $i\in\Z$, such that
\[
\langle\vec\Lambda_{l}(\pi_k(f^{k}_{i-p},\ldots,f^{k}_{i+p+1})),(g_{i-p},\ldots,g_{i+p+1})\rangle = (S_{k}[f^{k}] g)_{2i+l} = \sum_{j\in\Z} b^{2i+l,k}_{l+2(i-j)} g_j =  \sum_{j\in\Z} b^{2i+l,k}_{l-2j} g_{i+j}, \quad l=0,1.
\]
Hence, 
\[
	(b^{2i+l,k}_{l-2j})_{j=-p}^{p+1} := \vec\Lambda_{l}(\pi_k(f^k_{i-p},\ldots,f^k_{i+p+1})).
\]
We are omitting the dependence of $b^{2i+l,k}$ on $f^{k}$. For $l=0$, this is
\[
b^{2i,k}_{-2j} = \vec\Lambda_{0}(\pi_k(f^k_{i-p},\ldots,f^k_{i+p+1})), \quad j=-p,\ldots,p+1.
\]
Since the odd positions of $b^{2j,k}$ are not used, we can freely define them as
\[
b^{2i,k}_{1-2j} := \vec\Lambda_{1}(\pi_k(f^k_{i-p},\ldots,f^k_{i+p+1})), \quad j=-p,\ldots,p+1.
\]
We can proceed analogously with $l=1$, obtaining that
\[
b^{2i+1,k}_{1-2j} = \vec\Lambda_{1}(\pi_k(f^k_{i-p},\ldots,f^k_{i+p+1})), \quad j=-p,\ldots,p+1,
\]
and defining
\[
b^{2i+1,k}_{-2j} := \vec\Lambda_{0}(\pi_k(f^k_{i-p},\ldots,f^k_{i+p+1})), \quad j=-p,\ldots,p+1.
\]
By defining the unused coefficients in this manner, we have got \eqref{eq:mask_classB} and 
\(
b^{2i,k} = b^{2i+1,k}, \ \forall i\in\Z.
\)

Denote by $b^*$ the mask of the linear, uniform and stationary scheme $\cS^*$, which is
\[
(b^*_{l-2j})_{j=-p}^{p+1} = \vec\Lambda_{l}(0,\ldots,0), \qquad l=0,1.
\]
We are going to prove that $\cS[f^0]$ is asymptotically equivalent to $\cS^*$. Note that
\[
\|b^{2i,k} - b^*\|_\infty = \|b^{2i+1,k} - b^*\|_\infty = \max_{l=0,1} \{|\vec\Lambda_{l}(\pi_k(f^k_{i-p},\ldots,f^k_{i+p+1})) - \vec\Lambda_{l}(0,\ldots,0)|_\infty\}.
\]
By hypothesis,
\[
\|b^{2i,k} - b^*\|_\infty = \|b^{2i+1,k} - b^*\|_\infty \leq C |\pi_k(f^k_{i-p},\ldots,f^k_{i+p+1})|_\infty \leq 2^{-k} C\rho.
\]
Finally, this implies that
\[
\sum_{k=0}^\infty \sup_{i\in\Z} \{\|b^{i,k} - b^*\|\} \leq \sum_{k=0}^\infty 2^{-k} C\rho < \infty.
\]
\end{proof}
The constants $\rho,C$ could be \emph{absorbed} by the vector norms and, thus, removed from the definition of class B. We keep them for the sake of clarity. 

\begin{cor}
	Any scheme belonging to class B is convergent.
\end{cor}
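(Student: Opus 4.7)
The plan is a direct assembly of results already established just above the corollary. Given a scheme $\cS$ in class B, I would first invoke Theorem \ref{teo:nonlinear_asymtotic_equivalent} to extract three facts about it simultaneously: (i) $\cS$ is quasilinear, so that the linearised scheme $\cS[f^0]$ is well-defined for every $f^0\in\liZn$; (ii) $\cS[f^0]$ is scalar-based, with explicit masks given by \eqref{eq:mask_classB}; and (iii) $\cS[f^0]$ is asymptotically equivalent to the associated linear uniform stationary scheme $\cS^*$.

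Next I would recall, from the definition of class B itself, that $\cS^*$ is required to be convergent. Since $\cS^*$ is built from the coefficients $\vec\Lambda_l(0)$ by constant subdivision rules, it is also linear, uniform, stationary and scalar-based, hence it satisfies exactly the hypotheses imposed on the reference scheme in Corollary \ref{cor:quasilinear_propertyA_implies_Cm}.

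The final step is to apply Corollary \ref{cor:quasilinear_propertyA_implies_Cm} with $\tilde\cS:=\cS^*$: $\cS$ is quasilinear, and for every $f^0\in\liZn$ the linear scheme $\cS[f^0]$ is scalar-based and asymptotically equivalent to the convergent, linear, uniform, stationary, scalar-based scheme $\tilde\cS$. The corollary's convergence conclusion therefore yields the convergence of $\cS$, which is the claim. There is no genuine obstacle here: all the technical work is absorbed in Theorem \ref{teo:nonlinear_asymtotic_equivalent} (asymptotic equivalence via the Lipschitz bound on $\vec\Lambda_l$ and the geometric decay of $|\pi_k|$) and in Corollary \ref{cor:quasilinear_propertyA_implies_Cm} (which in turn rests on Theorem \ref{teo:Am_implies_C0}); one only needs to notice that all hypotheses line up with $\tilde\cS=\cS^*$.
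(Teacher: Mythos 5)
Your proposal is correct and follows exactly the paper's route: the paper's own proof is the one-line instruction ``Apply Theorem \ref{teo:nonlinear_asymtotic_equivalent} and Corollary \ref{cor:quasilinear_propertyA_implies_Cm},'' and you have simply spelled out how the hypotheses of the latter are supplied by the former with $\tilde\cS=\cS^*$. Nothing is missing.
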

\begin{proof}
	Apply Theorem \ref{teo:nonlinear_asymtotic_equivalent} and Corollary \ref{cor:quasilinear_propertyA_implies_Cm}.
\end{proof}

\begin{prop} \label{prop:ours_classB}
	For any $\rho\geq0$, the subdivision scheme in Definition \ref{defi:scheme} belongs to class B.
\end{prop}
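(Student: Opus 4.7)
The plan is to exhibit directly the data required by Definition \ref{defi:classB}. I take $p=1$ (so the stencil size $2p+2=4$ matches the rule), $q=2$, and $D := [-\rho/(1+\rho),\rho]^2$, which contains the origin. For the parameter function, I set
\[
\pi_k(f_{-1},f_0,f_1,f_2) := ((\alpha^k,\beta^k)\circ(A,B))(\nabla f_{-1},\nabla f_0,\nabla f_1) - (1,1) \in \R^2.
\]

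Next I verify the three bullet points of Definition \ref{defi:classB}. Since \eqref{eq:alpha_beta_k} explicitly truncates $\alpha^k,\beta^k$ into $[(1+2^{-k}\rho)^{-1}, 1+2^{-k}\rho]$, a direct estimate gives $|(1+2^{-k}\rho)^{-1}-1| = 2^{-k}\rho/(1+2^{-k}\rho) \leq 2^{-k}\rho$, so that $|\pi_k|_\infty \leq 2^{-k}\rho$ and the image of $\pi_k$ indeed lies in $D$. For the inner-product representation, I define $\vec\Lambda_0,\vec\Lambda_1:D\lra \R^4$ by
\[
\vec\Lambda_0(\pi_1,\pi_2) := (a_j(1+\pi_1,1+\pi_2))_{j=-1}^{2}, \qquad \vec\Lambda_1(\pi_1,\pi_2) := (a_{1-j}(1+\pi_2,1+\pi_1))_{j=-1}^{2}.
\]
By \eqref{eq:nonlinear_rules} and \eqref{def:Lambda}, this yields $\Psi_l^k(f_{-1},f_0,f_1,f_2) = \langle \vec\Lambda_l(\pi_k(f_{-1},f_0,f_1,f_2)), (f_{-1},f_0,f_1,f_2)\rangle$, exactly the form demanded by class B.

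For the Lipschitz estimate, the rational expressions $a_j(\alpha,\beta)$ in \eqref{eq:ai} are $\cC^\infty$ on $(0,+\infty)^2$, and the translated set $D+(1,1) = [(1+\rho)^{-1},1+\rho]^2$ is a compact subset where every denominator stays bounded away from zero. Hence each $a_j$ is Lipschitz on $D+(1,1)$, and a common constant $C>0$ valid for both $\vec\Lambda_0$ and $\vec\Lambda_1$ (with $\R^4$ equipped with $|\cdot|_\infty$) can be extracted, yielding $|\vec\Lambda_l(\pi)-\vec\Lambda_l(0)|_\infty \leq C|\pi|_\infty$. Finally, the associated linear stationary scheme $\cS^*$ corresponds to $\pi=0$, i.e.\ $\alpha=\beta=1$; by Definition \ref{defi:lagrange} this is precisely the Deslauriers--Dubuc scheme recalled in Remark \ref{rmk:dyn_scheme}, which is $\cC^2$ and therefore convergent, completing the verification.

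The argument is essentially bookkeeping against Definition \ref{defi:classB}, so the main obstacle I anticipate is little more than matching the index ordering of $\Lambda_1$ correctly (which swaps both the coefficient index and the roles of $\alpha,\beta$); once that is set up, the shrinking bound on $\pi_k$ is immediate from the truncation, and the Lipschitz estimate is a standard smoothness argument on a compact set.
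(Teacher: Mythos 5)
Your proof is correct and follows essentially the same route as the paper's: the same choices of $p=1$, $q=2$, the same $\pi_k$ and $\vec\Lambda_0,\vec\Lambda_1$ (with the index/argument swap for $\Lambda_1$ handled correctly), the same bound $|\pi_k|_\infty\le 2^{-k}\rho$ coming directly from the truncation in \eqref{eq:alpha_beta_k}, and the same compactness argument giving the Lipschitz estimate for the coefficients $a_j$ away from the zeros of their denominators. One small naming slip: the limit scheme $\cS^*$ obtained at $\alpha=\beta=1$ is the dual four-point scheme of \cite{DFH04} recalled in Remark \ref{rmk:dyn_scheme}, not the interpolatory Deslauriers--Dubuc scheme, but the only property you use --- its convergence --- is exactly what the paper invokes.
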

\begin{proof}
	Taking $q=2$, $p=1$,
	\begin{align}
		\pi_k(f_{-1},f_0,f_1,f_2) &= ((\alpha^k,\beta^k)\circ(A,B))(\nabla f_{-1},\nabla f_{0},\nabla f_{1}) - (1,1)  \label{eq:pi_k_ours}\\
		\begin{split} \label{eq:vecLambda}
		\vec\Lambda_{0}(\pi) &= (a_{-1},a_{0},a_1,a_2)(\pi_1+1,\pi_2+1),\\
		\vec\Lambda_{1}(\pi) &= (a_{2},a_1,a_0,a_{-1})(\pi_2+1,\pi_1+1).
		\end{split}
	\end{align}
	Observe that we are using function evaluations to shorten the notation in last equations.
	
	The values that $\alpha^k,\beta^k$ can reach are contained in $[(1+2^{-k}\rho)^{-1},1+2^{-k}\rho]$ by definition and, as a consequence,
	\[
	|\alpha^k(A,B) - 1|,|\beta^k(A,B) - 1| \leq 2^{-k}\rho, \qquad \forall A,B>0.
	\]
	Hence, $|\pi_k(f_{-1},f_0,f_1,f_2)|_\infty \leq 2^{-k}\rho$, $D = [(1+2^{-k}\rho)^{-1}-1,2^{-k}\rho]^2$ and $(0,0)\in D$.
	
	Observe that $a_i$ are Lipschitz operators in $[(1+2^{-k}\rho)^{-1},1+2^{-k}\rho]^2$, then $\vec\Lambda_{l}$, $l=0,1$, is Lipschitz in $D$. Thus, taking as $C$ the Lipschitz constant we have that
	\[
	|\vec\Lambda_{l}(\pi) - \vec\Lambda_{l}(0,0)|_\infty \leq  C|\pi|_\infty, \quad \pi\in D, \ l=0,1.
	\]
	
	Finally, as commented in Remark \ref{rmk:dyn_scheme}, we have that $\cS^*$ is convergent.
\end{proof}

Note that if $\cS$ belongs to class B, it may not be scalar-based, as is the case with the proposed scheme. However, as demonstrated in Theorem \ref{teo:nonlinear_asymtotic_equivalent}, $\cS[f^0]$ is scalar-based for any $f^0\in\liZn$.

\begin{teo} \label{teo:classB_C1}
Let $\cS$ be belonging to class B
such that
\(
\langle\vec\Lambda_{l}(\pi),(1,\ldots,1)\rangle = 1, \, \forall \pi\in D, \ l=0,1.
\)
Then $\cS$ reproduces constants. In addition, if $\cS^*$ is $\cC^1$, then $\cS$ is $\cC^1$.
\end{teo}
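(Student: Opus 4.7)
The plan is to split the statement into its two claims and handle them separately, relying heavily on the machinery already assembled in Theorem \ref{teo:nonlinear_asymtotic_equivalent}, Lemma \ref{lem:sum_one_implies_d0} and Corollary \ref{cor:quasilinear_propertyA_implies_Cm}.

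For reproduction of constants, I would argue directly that the constant sequence $f_i\equiv c\in\R^n$ is a fixed point of every $S_k$. Indeed, using the representation of the rules given in Definition \ref{defi:classB} and the hypothesis on $\vec\Lambda_l$,
\[
\Psi^k_l(c,\ldots,c) = \langle\vec\Lambda_{l}(\pi_k(c,\ldots,c)),(c,\ldots,c)\rangle = c\,\langle\vec\Lambda_{l}(\pi_k(c,\ldots,c)),(1,\ldots,1)\rangle = c, \quad l=0,1.
\]
Combined with the convergence of $\cS$ (which follows from the class B assumption that $\cS^*$ converges, via Theorem \ref{teo:nonlinear_asymtotic_equivalent} and Corollary \ref{cor:quasilinear_propertyA_implies_Cm}), this forces $S^\infty f^0$ to be the constant function $c$ whenever $f^0_i\equiv c$.

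For the $\cC^1$ claim, the strategy is to apply Corollary \ref{cor:quasilinear_propertyA_implies_Cm} with $m=1$. Theorem \ref{teo:nonlinear_asymtotic_equivalent} already provides three of the four ingredients: $\cS$ is quasilinear, each linearization $\cS[f^0]$ is scalar-based, and $\cS[f^0]$ is asymptotically equivalent to $\cS^*$. Since $\cS^*$ is $\cC^1$ by hypothesis, the only missing step is that $\cS[f^0]$ satisfies property A of order $1$ for every $f^0\in\liZn$. Here I would invoke Lemma \ref{lem:sum_one_implies_d0}: the problem reduces to checking that, for each mask $b^{i,k}$, the even-indexed and odd-indexed coefficients each sum to $1$. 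From the explicit mask formula \eqref{eq:mask_classB} together with the convention $b^{2i+1,k}=b^{2i,k}$, the even-indexed entries of $b^{2i,k}$ are precisely $\vec\Lambda_0(\pi_k(f^k_{i-p},\ldots,f^k_{i+p+1}))$ and the odd-indexed ones are $\vec\Lambda_1(\pi_k(f^k_{i-p},\ldots,f^k_{i+p+1}))$; the hypothesis on $\vec\Lambda_l$ makes both sums equal to $1$. Consequently $d^{i,k}_1(\pm 1)=0$ for all $i,k$, property A of order $1$ holds trivially (the series vanishes term by term), and Corollary \ref{cor:quasilinear_propertyA_implies_Cm} yields the $\cC^1$ regularity of $\cS$.

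The main obstacle I anticipate is purely bookkeeping: one has to keep track of the parity conventions used in \eqref{eq:mask_classB} when identifying which entries of $\vec\Lambda_0$ and $\vec\Lambda_1$ land on even versus odd positions of the mask, and verify that the artificial extension of the masks to their \emph{unused} halves (chosen in the proof of Theorem \ref{teo:nonlinear_asymtotic_equivalent} as $b^{2i,k}_{1-2j}:=\vec\Lambda_1(\pi)$ and $b^{2i+1,k}_{-2j}:=\vec\Lambda_0(\pi)$) does not spoil the partition-of-unity condition required by Lemma \ref{lem:sum_one_implies_d0}. Once that identification is pinned down, the argument is a direct, two-line chain of invocations and no further analytical work is needed.
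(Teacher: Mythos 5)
Your proposal is correct and follows essentially the same route as the paper: the constant-reproduction claim is verified by the same fixed-point computation $\Psi^k_l(c,\ldots,c)=c\,\langle\vec\Lambda_l(\pi_k(c,\ldots,c)),(1,\ldots,1)\rangle=c$, and the $\cC^1$ claim is obtained by feeding the partition-of-unity of both halves of the masks \eqref{eq:mask_classB} into Lemma \ref{lem:sum_one_implies_d0} to get $d^{i,k}_1(\pm 1)=0$, so that property A of order $1$ holds and Corollary \ref{cor:quasilinear_propertyA_implies_Cm} applies. The bookkeeping issue you flag about the artificially defined halves of the masks is resolved exactly as you suspect, since Theorem \ref{teo:nonlinear_asymtotic_equivalent} fills them with $\vec\Lambda_1(\pi)$ and $\vec\Lambda_0(\pi)$ respectively, both of which sum to $1$ under the hypothesis.
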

\begin{proof}
	Let $f^0 = (\ldots,c,c,c,\ldots)$, $c\in\R^n$, be some constant sequence. We will prove by induction on $k\in\N_0$ that $f^k = f^0$.
	Using the definition of class B and the hypothesis, we have that
	\begin{align*}
	f^{k+1}_{2i+l} &= \Psi^k_l(f^k_{i-p},\ldots,f^k_{i+p+1}) = \langle\vec\Lambda_{l}(\pi_k(c,\ldots,c)),(c,\ldots,c)\rangle \\
	&= c\ \langle\vec\Lambda_{l}(\pi_k(c,\ldots,c)),(1,\ldots,1)\rangle = c, & l=0,1, \ i\in\Z.
\end{align*}

To prove that $\cS$ is $\cC^1$, by Corollary \ref{cor:quasilinear_propertyA_implies_Cm}, we have prove that $\cS[f^0]$ satisfies property A of order 1, $\forall f^0\in\liZn$. Let us consider that $f^0$ is fixed and $f^k$ is computed by $\cS$. We denote by $\{b^{i,k}\}_{i\in\Z}$ the masks of $S_k[f^k]$. We have to prove that
\[
\sum_{k=0}^\infty 2^{k}\left | d^{i,k}_1(\pm 1) \right | < \infty, \quad i\in\Z.
\]
By \eqref{eq:mask_classB}, the odd and even positions of the masks of $b^{i,k}$ sums 1, since
\[
\sum_{j\in\Z}b^{2i,k}_{l-2j} = \sum_{j=-p}^{p+1} b^{2i,k}_{l-2j} = \langle (b^{2i,k}_{l-2j})_{j=-p}^{p+1},(1,\ldots,1)\rangle = \langle\vec\Lambda_{l}(\pi_k(f^k_{i-p},\ldots,f^k_{i+p+1})),(1,\ldots,1)\rangle = 1, \qquad l=0,1,
\]
and $b^{2i,k} = b^{2i+1,k}$.
By Lemma \ref{lem:sum_one_implies_d0}, we have that $d^{i,k}_1(\pm 1) = 0$, so that the property A is clearly satisfied.
\end{proof}

\begin{cor} \label{cor:scheme_C1}
	For any $\rho\geq0$, the subdivision scheme in Definition \ref{defi:scheme} is $\cC^1$.
\end{cor}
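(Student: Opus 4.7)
The plan is to recognize that Corollary \ref{cor:scheme_C1} is essentially a bookkeeping exercise that combines Proposition \ref{prop:ours_classB} (which places the proposed scheme in class B) with Theorem \ref{teo:classB_C1} (which upgrades class-B convergence to $\cC^1$ under two mild hypotheses). The work, therefore, consists in verifying those two hypotheses for the specific choice of $\pi_k$, $\vec\Lambda_0$, $\vec\Lambda_1$ exhibited in the proof of Proposition \ref{prop:ours_classB}.

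First I would invoke Proposition \ref{prop:ours_classB} to obtain the class-B structure with $\pi_k$ as in \eqref{eq:pi_k_ours} and $\vec\Lambda_0,\vec\Lambda_1$ as in \eqref{eq:vecLambda}. Next, for the affine-row-sum hypothesis of Theorem \ref{teo:classB_C1}, I would observe that for any $\pi\in D$ one has $\langle\vec\Lambda_l(\pi),(1,1,1,1)\rangle = \sum_{j=-1}^{2} a_j(\alpha,\beta)$ with $(\alpha,\beta)=(\pi_1+1,\pi_2+1)$ (or its swap, for $l=1$), and this sum equals $1$ by Remark \ref{rmk:lagrange_affine}, which records that the Lagrange coefficients are affine weights because constants lie in $\Pi_3^n$. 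This check is a direct substitution and is the same identity already used to justify that $L_k$ commutes with affine transformations.

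For the remaining hypothesis, that $\cS^*$ is $\cC^1$, I would identify $\cS^*$ explicitly: by definition its rules are $\vec\Lambda_l(0) = (a_{-1},a_0,a_1,a_2)(1,1)$ (resp. the reversed version for $l=1$), which by Remark \ref{rmk:dyn_scheme} is exactly the uniform stationary four-point dual scheme of \cite{DFH04}. That scheme is known to be $\cC^2$, hence in particular $\cC^1$.

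With both hypotheses verified, Theorem \ref{teo:classB_C1} applies and yields the $\cC^1$ conclusion for every $\rho\geq 0$. I do not anticipate any genuine obstacle: the non-trivial analytic content (quasilinearity, asymptotic equivalence, the passage from property A of order $1$ to $\cC^1$) has already been absorbed into Theorem \ref{teo:classB_C1}, and what remains for the corollary is only to certify the two structural properties of the rules, both of which have already been recorded earlier in the paper.
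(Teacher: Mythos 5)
Your proposal is correct and follows exactly the paper's route: class-B membership via Proposition \ref{prop:ours_classB}, the unit row-sum of $\vec\Lambda_l(\pi)$ via \eqref{eq:vecLambda} and Remark \ref{rmk:lagrange_affine}, and the $\cC^1$ (indeed $\cC^2$) regularity of $\cS^*$ from Remark \ref{rmk:dyn_scheme}, all fed into Theorem \ref{teo:classB_C1}. No gaps.
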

\begin{proof}
We check that fulfils Theorem \ref{teo:classB_C1}. It is of class B by Proposition \ref{prop:ours_classB} and the coordinates of $\vec\Lambda_{l}(\pi)$ sum 1 by \eqref{eq:vecLambda} and by Remark \ref{rmk:lagrange_affine}.
\end{proof}

\section{Convergence by non-linear subdivision theory} \label{sec:convergence_nonlinear}

The truncation procedure in \eqref{eq:alpha_beta_k} have been intentionally designed to make the scheme non-stationary and `asymptotically equivalent' to a linear scheme. This property simplifies the process of demonstrating convergence, in contrast to the potential complexity involved in proving the convergence of the stationary version of the scheme (consisting of removing the truncation procedure). The latter has a large non-linear expression that might be challenging to manage using conventional analytical tools for non-linear schemes.

In addition, the subdivision operators of the proposed scheme belong to the class of non-linear operators studied in \cite{ADL11}. Subsequently, we adapt \cite[Theorem 1]{ADL11} to our non-stationary context.

\begin{teo}\label{teo:convergence}
	Let $\cS= \{S_k:\liZn\lra \liZn\}_{k\in\N_0}$ be a subdivision scheme such that
	\begin{equation} \label{eq:class_perturbation}
	S_k = T + \cF_k \circ \nabla, \qquad k\in\N_{0},
	\end{equation}
	where $\cF_k: \liZn\lra \liZn$ is a non-linear operator and the stationary subdivision scheme defined by the subdivision operator $T$ is linear and convergent.
	If there exists $M>0$, $c\in(0,1)$, $k_0,K\in\N_0$ such that, $\forall k\geq k_0$,
	\begin{align}
		\| \cF_k \delta \|_\infty &\leq M \|\delta\|_\infty, & \forall \delta \in \liZn, \label{eq:cF_bounded}\\
		\|\nabla S_{k+K} \cdots S_k f \|_\infty &\leq c \|\nabla f\|_\infty, & \forall f \in \liZn, \label{eq:contractivity}
	\end{align}
	then the subdivision scheme converges.
\end{teo}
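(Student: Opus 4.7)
The plan is to interpret each iterate $f^{k}$ through its piecewise-linear interpolant $F^{k}:\R\lra\R^{n}$ with $F^{k}(i 2^{-k})=f^{k}_{i}$, and to show that $\{F^{k}\}$ is Cauchy in $L^{\infty}(\R,\R^{n})$. Standard facts for linear convergent uniform stationary schemes give two ingredients we will exploit. First, $T\mathbf{1}=\mathbf{1}$ (the mask rows sum to one, a necessary condition for convergence), so $T$ reproduces constants. Second, $T$ has a finite stencil with uniformly bounded coefficients, so there is a constant $C_{T}>0$ with $|(Tg)_{j}|\leq C_{T}\sup_{|l|\leq p_T}|g_{\lfloor j/2\rfloor+l}|$ for every $g\in\liZn$. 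A preliminary observation: applying \eqref{eq:cF_bounded} to $\delta=0$ yields $\cF_{k}(0)=0$, so for the constant initial datum $f^{0}=c\cdot\mathbf{1}$ we get $f^{k}=c\cdot\mathbf{1}$ for every $k$; choosing $c\neq 0$ will secure the nontriviality requirement $S^{\infty}\neq 0$.

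The per-step estimate is the first substantive step. Using linearity of $T$ and $T\mathbf{1}=\mathbf{1}$, we write
\[
f^{k+1}_{2i+l}-f^{k}_{i}=\bigl(T(f^{k}-f^{k}_{i}\mathbf{1})\bigr)_{2i+l}+(\cF_{k}\nabla f^{k})_{2i+l},\qquad l=0,1.
\]
On the stencil of $T$ around index $i$, $|(f^{k}-f^{k}_{i}\mathbf{1})_{i+l}|\leq |l|\,\|\nabla f^{k}\|_{\infty}\leq p_{T}\,\|\nabla f^{k}\|_{\infty}$, so the $T$-term is bounded by $C_{T}p_{T}\|\nabla f^{k}\|_{\infty}$. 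The $\cF_{k}$-term is bounded by $M\|\nabla f^{k}\|_{\infty}$ thanks to \eqref{eq:cF_bounded}. For the odd index one replaces $f^{k}_{i}\mathbf{1}$ by the constant $\tfrac{1}{2}(f^{k}_{i}+f^{k}_{i+1})\mathbf{1}$ and argues identically. Consequently, with some $C>0$ independent of $k$,
\[
\|F^{k+1}-F^{k}\|_{\infty}\leq C\,\|\nabla f^{k}\|_{\infty}.
\]

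To close, invoke the contractivity \eqref{eq:contractivity}: for $k\geq k_{0}$, $\|\nabla f^{k+K+1}\|_{\infty}\leq c\|\nabla f^{k}\|_{\infty}$, so iterating gives $\|\nabla f^{k}\|_{\infty}\leq C_{0}\,\tilde c^{k}$ for some $\tilde c=c^{1/(K+1)}\in(0,1)$ and all $k$ large. Combined with the previous estimate, $\sum_{k}\|F^{k+1}-F^{k}\|_{\infty}<\infty$, whence $F^{k}$ converges uniformly on $\R$ to a continuous limit $F=S^{\infty}f^{0}$. The sup over $i\in\Z\cap 2^{k}I$ of $|f^{k}_{i}-F(i 2^{-k})|$ is then controlled by $\sum_{m\geq k}\|F^{m+1}-F^{m}\|_{\infty}\to 0$, matching the convergence definition. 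Together with the constant-input observation above, the scheme is convergent.

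The main obstacle is the local Lipschitz-type bound on $T$ that reduces $\|T(f^{k}-f^{k}_{i}\mathbf{1})\|_{\infty}$ to $\|\nabla f^{k}\|_{\infty}$: it requires simultaneously that $T$ reproduces constants, has a finite stencil, and has coefficients uniformly bounded, all of which are automatic from Definition \ref{defi:vector_subdivision} and the convergence of $\{T\}$ as a stationary scheme. The non-stationarity of $\cS$ enters only through the $k$-dependent operator $\cF_{k}$, but \eqref{eq:cF_bounded} and \eqref{eq:contractivity} give bounds uniform in $k\geq k_{0}$, so the summability argument goes through as in the stationary setting of \cite{ADL11}.
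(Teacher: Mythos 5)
Your argument is correct in substance, but it takes a genuinely different route from the paper: the paper's proof of Theorem~\ref{teo:convergence} is essentially a two-line citation, stating that the result is \cite[Theorem~1]{ADL11} for scalar-valued stationary schemes and that the proof ``would be fairly the same'' in the vector-valued non-stationary case (falling back on \cite{DRS04}, where non-stationarity and $n=2$ were already treated), whereas you reconstruct the proof from scratch via piecewise-linear interpolants $F^k$, the telescoping bound $\|F^{k+1}-F^k\|_\infty\leq C\|\nabla f^k\|_\infty$ (obtained from $T\mathbf{1}=\mathbf{1}$, the finite stencil, and \eqref{eq:cF_bounded}), and geometric decay of $\|\nabla f^k\|_\infty$ from \eqref{eq:contractivity}. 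What your version buys is an explicit verification that the adaptation to $n>1$ and to $k$-dependent $\cF_k$ is indeed harmless — precisely the point the paper asks the reader to take on trust; what the paper's version buys is brevity and a clean delegation to the literature. Two small points to tighten: (i) \eqref{eq:cF_bounded} gives $\cF_k(0)=0$ only for $k\geq k_0$, so constant data need not be preserved during the first $k_0$ steps and your one-line justification of $S^\infty\neq 0$ does not quite work as stated; it is easily repaired by noting that $S_k(c\mathbf{1}+g)=c\mathbf{1}+Tg+\cF_k(\nabla g)$, so $f^k=c\mathbf{1}+g^k$ with $g^k$ independent of $c$, and taking $|c|$ large forces a nonzero limit. (ii) When iterating \eqref{eq:contractivity} you should make explicit that the decay is along arithmetic progressions of step $K+1$, so the constant $C_0$ absorbs $\max_{0\leq r\leq K}\|\nabla f^{k_0+r}\|_\infty$, which is finite since each $S_k$ maps $\liZn$ into itself. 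Neither point affects the validity of the overall argument.
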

\begin{proof}
	This result was proven in \cite{ADL11} for scalar-valued ($n=1$) and stationary ($S_k=S_0$, $\forall k\in\N_0$, $k_0=0$) subdivision schemes.
	However, the proof would be fairly the same for $n>1$ and the non-stationary case. In fact, the theorem in \cite{ADL11} rely on the original results in \cite{DRS04}, where the non-stationary case and $n=2$ was considered.
	
	Furthermore, the initial iterations have no impact on the convergence of a subdivision scheme, as we can regard $f^{k_0}$ as the initial data. For the subsequent iterations, the hypotheses remain valid.
\end{proof}

\begin{teo} \label{teo:convergence_perturbation}
	Let $\cS$ be belonging to class B such that
	\(
	\langle\vec\Lambda_{l}(\pi),(1,\ldots,1)\rangle = 1, \, \forall \pi\in D, \ l=0,1.
	\)
	Suppose that there exists $\pi^{[1]}_k:(\R^n)^{2p+1} \lra D$ such that \(\pi_k(f_{-p},\ldots,f_{p+1}) = \pi^{[1]}_k(\nabla f_{-p},\ldots,\nabla f_{p})\).
	Then, $\cS$ fulfils the hypothesis of Theorem \ref{teo:convergence} (thus, it converges).
\end{teo}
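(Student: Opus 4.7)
The plan is to split each $S_k$ as $T+\cF_k\circ\nabla$, where $T$ is the subdivision operator of the linear stationary scheme $\cS^*$ associated with the class B structure, and then verify the three hypotheses of Theorem \ref{teo:convergence} in turn. To produce this decomposition I would write $\vec\Lambda_l(\pi)=\vec\Lambda_l(0)+(\vec\Lambda_l(\pi)-\vec\Lambda_l(0))$ in the class B formula for $\Psi^k_l$. The first summand yields $(Tf)_{2i+l}$. The hypothesis $\langle\vec\Lambda_l(\pi),(1,\ldots,1)\rangle=1$ applied at both $\pi$ and $0$ forces the vector $\vec\Lambda_l(\pi)-\vec\Lambda_l(0)$ to sum to zero, so a standard Abel summation rewrites its inner product with $(f_{i-p},\ldots,f_{i+p+1})$ as the inner product of some transformed weights $\tilde V_l(\pi)\in\R^{2p+1}$ with $(\nabla f_{i-p},\ldots,\nabla f_{i+p})$. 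Since by hypothesis $\pi_k=\pi^{[1]}_k\circ\nabla$, this second summand depends only on $\nabla f$, which defines $\cF_k$ and establishes $S_k=T+\cF_k\circ\nabla$.

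Checking \eqref{eq:cF_bounded} is then a short calculation: combining the class B Lipschitz bound $|\vec\Lambda_l(\pi)-\vec\Lambda_l(0)|\leq C|\pi|$ with the estimate $|\pi^{[1]}_k(\cdot)|\leq 2^{-k}\rho$ yields $\|\cF_k\delta\|_\infty\leq M\cdot 2^{-k}\|\delta\|_\infty\leq M\|\delta\|_\infty$, for some $M>0$ independent of $k$. The geometric decay in $k$ is stronger than \eqref{eq:cF_bounded} requires, and will be decisive in the last step.

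The main obstacle is the contractivity \eqref{eq:contractivity}. Since $T$ reproduces constants (specialize the sum-to-one hypothesis to $\pi=0$), classical linear subdivision theory provides a derived scheme $T_1$ satisfying $\nabla T=T_1\nabla$, and the convergence of $\cS^*$ is equivalent to the existence of $K_0\in\N$ and $c_0\in(0,1)$ such that $\|T_1^{K_0}\|_\infty\leq c_0$. Unrolling $K_0$ iterations of the recursion $\nabla(S_k g)=T_1\nabla g+\nabla\cF_k(\nabla g)$ yields
\[
\nabla(S_{k+K_0-1}\cdots S_k f)=T_1^{K_0}\nabla f+\sum_{j=0}^{K_0-1}T_1^{K_0-1-j}\,\nabla\cF_{k+j}(\nabla f^{(j)}),
\]
where $f^{(j)}:=S_{k+j-1}\cdots S_k f$ (and $f^{(0)}:=f$). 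The leading term is bounded by $c_0\|\nabla f\|_\infty$. Each summand of the remainder contains exactly one $\cF_{k+j}$ factor, contributing a $2^{-(k+j)}$ decay, while the intermediate norms $\|\nabla f^{(j)}\|_\infty$ remain bounded by a constant (depending only on $K_0$) multiple of $\|\nabla f\|_\infty$, so the whole remainder is $O(2^{-k}\|\nabla f\|_\infty)$. Choosing $k_0$ large enough that $c_0+O(2^{-k_0})<1$ then yields \eqref{eq:contractivity} with $K:=K_0-1$, and Theorem \ref{teo:convergence} concludes.
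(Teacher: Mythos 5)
Your proposal is correct and follows essentially the same route as the paper's proof: the same decomposition $S_k = T + \cF_k\circ\nabla$ via summation by parts on $\vec\Lambda_l(\pi)-\vec\Lambda_l(0)$, the same Lipschitz-plus-$2^{-k}\rho$ bound for \eqref{eq:cF_bounded}, and the same contractivity argument through the derived operator $T^{[1]}$ of the convergent scheme $\cS^*$ perturbed by the decaying $\cF_k$. If anything, your explicit unrolling with a remainder sum of order $O(2^{-k}\|\nabla f\|_\infty)$ is slightly more quantitative than the paper's limit-based phrasing and makes the uniformity of $k_0$ in $f$ transparent.
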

\begin{proof}
	First, we prove that $\cS$ can be written as in \eqref{eq:class_perturbation}.
	By hypothesis,
	\(
	\langle\vec\Lambda_{l}(\pi)-\vec\Lambda_{l}(0),(1,\ldots,1)\rangle = 0, \, \forall \pi\in D, \ l=0,1.
	\)
	Hence,
	\[\vec\Lambda_{l}(\pi)-\vec\Lambda_{l}(0) \in \espan{(1,1,\ldots,1,1)}^\perp
	= \espan{(-1,1,0,0,\ldots),(0,-1,1,0,\ldots),\ldots,(\ldots,0,-1,1)}.
	\]
	Then, there exist some coefficients $\vec\Lambda^{[1]}_{l}(\pi) = ((\vec\Lambda^{[1]}_{l}(\pi))_j)_{j=-p}^p \in\R^{2p+1}$ such that
	\[
	\vec\Lambda_{l}(\pi)-\vec\Lambda_{l}(0) = \sum_{j=-p}^p (\vec\Lambda^{[1]}_{l}(\pi))_j (0,\ldots,\overset{(j)}{-1},1,0,\ldots,0), \qquad l=0,1.
	\]
	This means that
	\begin{align*}
		\langle\vec\Lambda_{l}(\pi)-\vec\Lambda_{l}(0),(f_{-p},\ldots,f_{p+1})\rangle
	&=\sum_{j=-p}^p (\vec\Lambda^{[1]}_{l}(\pi))_j \langle (0,\ldots,\overset{(j)}{-1},1,0,\ldots,0),(f_{-p},\ldots,f_{p+1})\rangle \\
	&=
	\sum_{j=-p}^p (\vec\Lambda^{[1]}_{l}(\pi))_j \nabla f_{j}
	=
	\langle\vec\Lambda^{[1]}_{l}(\pi),(\nabla f_{-p},\ldots,\nabla f_{p})\rangle.
	\end{align*}
	Now, for $l=0,1$, by Definition \ref{defi:classB} (class B), by the latter equation and by hypothesis, in this order:
	\begin{align*}
		\Psi^k_l(f_{-p},\ldots,f_{p+1}) &=  \langle \vec\Lambda_{l}(\pi_k(f_{-p},\ldots,f_{p+1})),(f_{-p},\ldots,f_{p+1})\rangle\\
		&=  \langle \vec\Lambda_{l}(0),(f_{-p},\ldots,f_{p+1})\rangle
		+  \langle \vec\Lambda_{l}(\pi_k(f_{-p},\ldots,f_{p+1}))-\vec\Lambda_{l}(0),(f_{-p},\ldots,f_{p+1})\rangle\\
		&=  \langle \vec\Lambda_{l}(0),(f_{-p},\ldots,f_{p+1})\rangle
		+  \langle\vec\Lambda^{[1]}_{l}(\pi_k(f_{-p},\ldots,f_{p+1})),(\nabla f_{-p},\ldots,\nabla f_{p})\rangle\\
		&=  \langle \vec\Lambda_{l}(0),(f_{-p},\ldots,f_{p+1})\rangle
		+  \langle\vec\Lambda^{[1]}_{l}(\pi^{[1]}_k(\nabla f_{-p},\ldots,\nabla f_{p})),(\nabla f_{-p},\ldots,\nabla f_{p})\rangle.
	\end{align*}
	Thus, the subdivision operator can be written as $S_k = T + \cF_k \circ \nabla$, where $T$ coincides with the subdivision operator of $\cS^*$ (its coefficients are given by $\vec\Lambda_{l}(0)$, $l=0,1$) and $\cF_k: \liZn\lra \liZn$ is defined as
	\begin{align} \label{eq:def_Fk}
		(\cF_k(\delta))_{2i + l} &:= \langle\vec\Lambda^{[1]}_{l}(\pi^{[1]}_k(\delta_{i-p},\ldots,\delta_{i+p})),(\delta_{i-p},\ldots,\delta_{i+p})\rangle, & l=0,1.
	\end{align}

	Second, we prove that \eqref{eq:cF_bounded} is satisfied.
	By definition of class B, taking the vector norm $|\cdot|_{\R^{q}} := |\cdot|_\infty$ and considering
	\begin{equation} \label{eq:choice_of_f}
		(f_{-p},\ldots,f_{p+1}) := (0,\delta_{-p},\delta_{-p}+\delta_{-p+1},\ldots,\sum_{j=-p}^p\delta_{j}),
	\end{equation}
	so that $(\nabla f_{-p},\ldots,\nabla f_{p}) = (\delta_{-p},\ldots,\delta_{p})$, we have that
	\begin{align} \label{eq:pi1_bounded}
	| \pi^{[1]}_k(\delta_{-p},\ldots,\delta_{p}) |_\infty = | \pi^{[1]}_k(\nabla f_{-p},\ldots,\nabla f_{p}) |_\infty = |\pi_k(f_{-p},\ldots,f_{p+1})|_\infty \leq 2^{-k}\rho.
	\end{align}
	That is, $\pi^{[1]}_k$ is bounded and converges uniformly to 0 when $k\to\infty$. We can also deduce that
	\begin{align} \label{eq:teo:convergence:1}
		\langle\vec\Lambda^{[1]}_{l}(\pi),(\delta_{-p},\ldots,\delta_{p})\rangle = 
		\langle\vec\Lambda^{[1]}_{l}(\pi),(\nabla f_{-p},\ldots,\nabla f_{p})\rangle = \langle\vec\Lambda_{l}(\pi)-\vec\Lambda_{l}(0),(f_{-p},\ldots,f_{p+1})\rangle.
	\end{align}
	Observe that the last expression is an equality between vectors in $\R^{n}$. Now we only pay attention to one of the coordinates, the first one, for example.
	For each $j\in\{-p,\ldots,p\}$, we choose the data such that $(\delta_j)_1 = 1$ and $(\delta_i)_1 = 0$, $i\neq j$. By \eqref{eq:choice_of_f},
	\[
		((f_{-p})_1,\ldots,(f_{p+1})_1) = (0,0,\ldots,0,\overset{(j)}{1},1,\ldots,1)
	\]
	and, by \eqref{eq:teo:convergence:1},
	\begin{align*}
	|(\vec\Lambda^{[1]}_{l}(\pi))_j|
	&= |(\langle\vec\Lambda^{[1]}_{l}(\pi),(\delta_{-p},\ldots,\delta_{p})\rangle)_1|
	= |(\langle\vec\Lambda_{l}(\pi)-\vec\Lambda_{l}(0),f_{-p},\ldots,f_{p+1}\rangle)_1| \\
	&= |\langle\vec\Lambda_{l}(\pi)-\vec\Lambda_{l}(0),((f_{-p})_1,\ldots,(f_{p+1})_1)\rangle|
	= |\langle\vec\Lambda_{l}(\pi)-\vec\Lambda_{l}(0),(0,0,\ldots,1,1\ldots)\rangle| 
	\\
	& \leq |\vec\Lambda_{l}(\pi)-\vec\Lambda_{l}(0)|_1 \leq C|\pi|_\infty.
	\end{align*}
	Observe that we have conveniently considered in the last line that $|\cdot|_{\R^{2p+2}}:=|\cdot|_1$.
	Then $|\vec\Lambda^{[1]}_{l}(\pi)|_\infty \leq C|\pi|_\infty$, $l=0,1$. With this, \eqref{eq:pi1_bounded} and \eqref{eq:def_Fk}, we conclude that
	\begin{align*}
		|(\cF_k(\delta))_{2i + l}| &\leq |\vec\Lambda^{[1]}_{l}(\pi^{[1]}_k(\delta_{i-p},\ldots,\delta_{i+p}))|_1 \|(\delta_{i-p},\ldots,\delta_{i+p})\|_\infty \\
		&\leq (2p+1)|\vec\Lambda^{[1]}_{l}(\pi^{[1]}_k(\delta_{i-p},\ldots,\delta_{i+p}))|_\infty \|(\delta_{i-p},\ldots,\delta_{i+p})\|_\infty 
		\\
		&\leq (2p+1)C |\pi^{[1]}_k(\delta_{i-p},\ldots,\delta_{i+p})|_\infty \|(\delta_{i-p},\ldots,\delta_{i+p})\|_\infty 
		\leq (2p+1)C 2^{-k} \rho\|(\delta_{i-p},\ldots,\delta_{i+p})\|_\infty.
	\end{align*}
That is, $	\| \cF_k \delta \|_\infty \leq M_k \|\delta\|_\infty$ with $M_k=(2p+1)C 2^{-k} \rho \leq (2p+1)C \rho$.
	In addition, we conclude that the sequences of operators $\{\cF_k\}_{k\in\N_0}$ converges uniformly to zero.
	
	Finally, we prove \eqref{eq:contractivity}. Observe that,
	\[
	\nabla \circ S_k = \nabla \circ T + \nabla\circ\cF_k\circ\nabla = T^{[1]}\circ  \nabla + \nabla\circ\cF_k\circ\nabla = (T^{[1]} + \nabla\circ\cF)\circ\nabla = S_{k}^{[1]}\circ\nabla,
	\]
	where $T^{[1]}$ is referred to as the \emph{difference operator} of $T$, and $S_{k}^{[1]} := T^{[1]} + \nabla\circ\cF_k$ turns out to be the difference operator of $S_k$.
	
	It is well known (see \cite{Dyn92}) that for a linear, stationary, uniform and convergent scheme, as the one defined by $T$, there exists $K\in\N$ such that
	\[
	\| T^{[1]}\circ \overset{(K)}{\cdots} \circ T^{[1]} \|_\infty = c < 1.
	\]
	Using the uniform convergence of $\cF_k$ to zero, we deduce that
\begin{align*}
	\lim_{k\to+\infty} \|\nabla S_{k+K} \cdots S_k f \|_\infty &= \lim_{k\to+\infty} \|S^{[1]}_{k+K} \cdots S^{[1]}_k \nabla f \|_\infty = \lim_{k\to+\infty} \|
	(T^{[1]} + \nabla\circ\cF_{k+K}) \cdots (T^{[1]} + \nabla\circ\cF_{k}) \nabla f \|_\infty \\
	&= \|
	T^{[1]}\circ \overset{(K)}{\cdots} \circ T^{[1]} \nabla f \|_\infty \leq c \|\nabla f \|_\infty.
\end{align*}
Hence, for $c' = (1+c)/2 < 1$, there exists $k_0\in\N$ such that
\[
\|\nabla S_{k+K} \cdots S_k f \|_\infty \leq c' \|\nabla f\|_\infty, \qquad \forall k\geq k_0.
\]
\end{proof}

\section{Numerical experiments} \label{sec:numerical}

In this section, we introduce several numerical experiments. They have been designed to illustrate the reproduction of parabolas on non-uniform grids and the generation of 2D and 3D curves using the proposed scheme. We consider various values of $\rho$ to demonstrate the impact of this parameter on the final shape. For comparison, we also consider $\rho = 0$. In this case, the proposed scheme coincides with the Lagrange-type linear uniform scheme of Remark \ref{rmk:dyn_scheme}.
All computations were executed using MATLAB R2022a. The codes necessary to replicate these experiments are readily available (see section Reproducibility).

\subsection{Parabolas reproduction}

\begin{figure}[h]
	\centering
	\includegraphics[clip,height = 0.23\textheight]{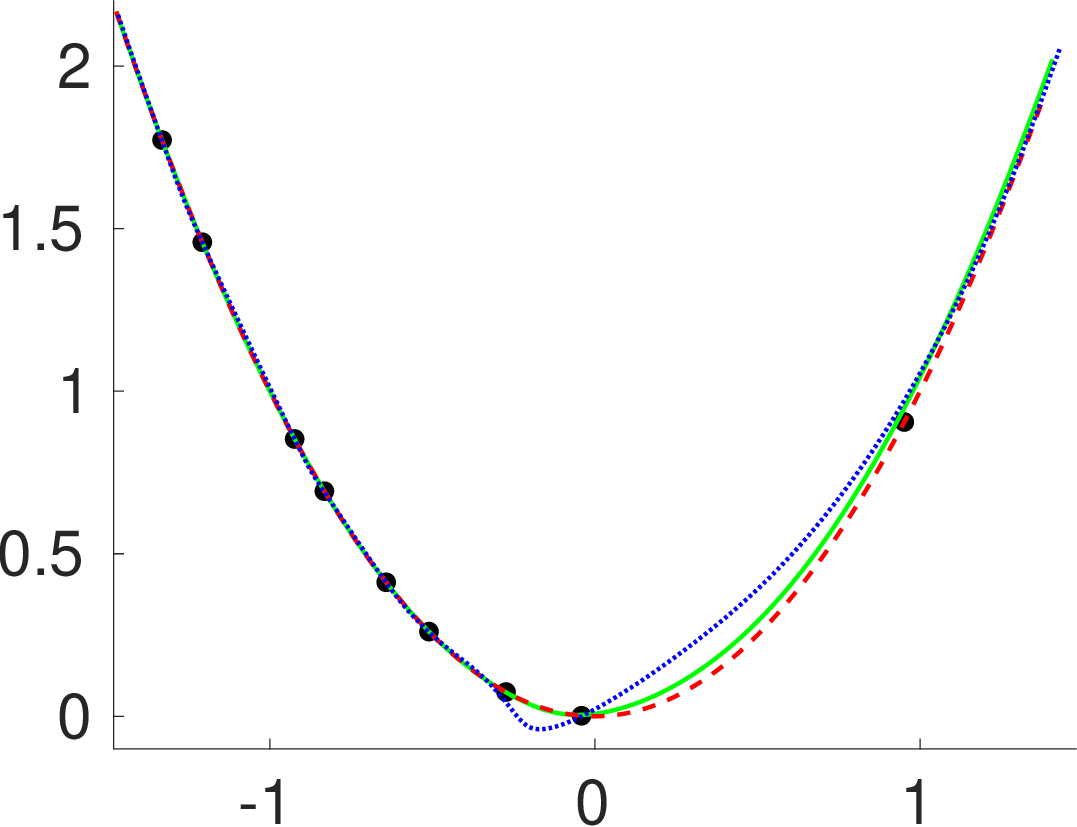} \hspace{10pt}
	\includegraphics[clip,height = 0.23\textheight]{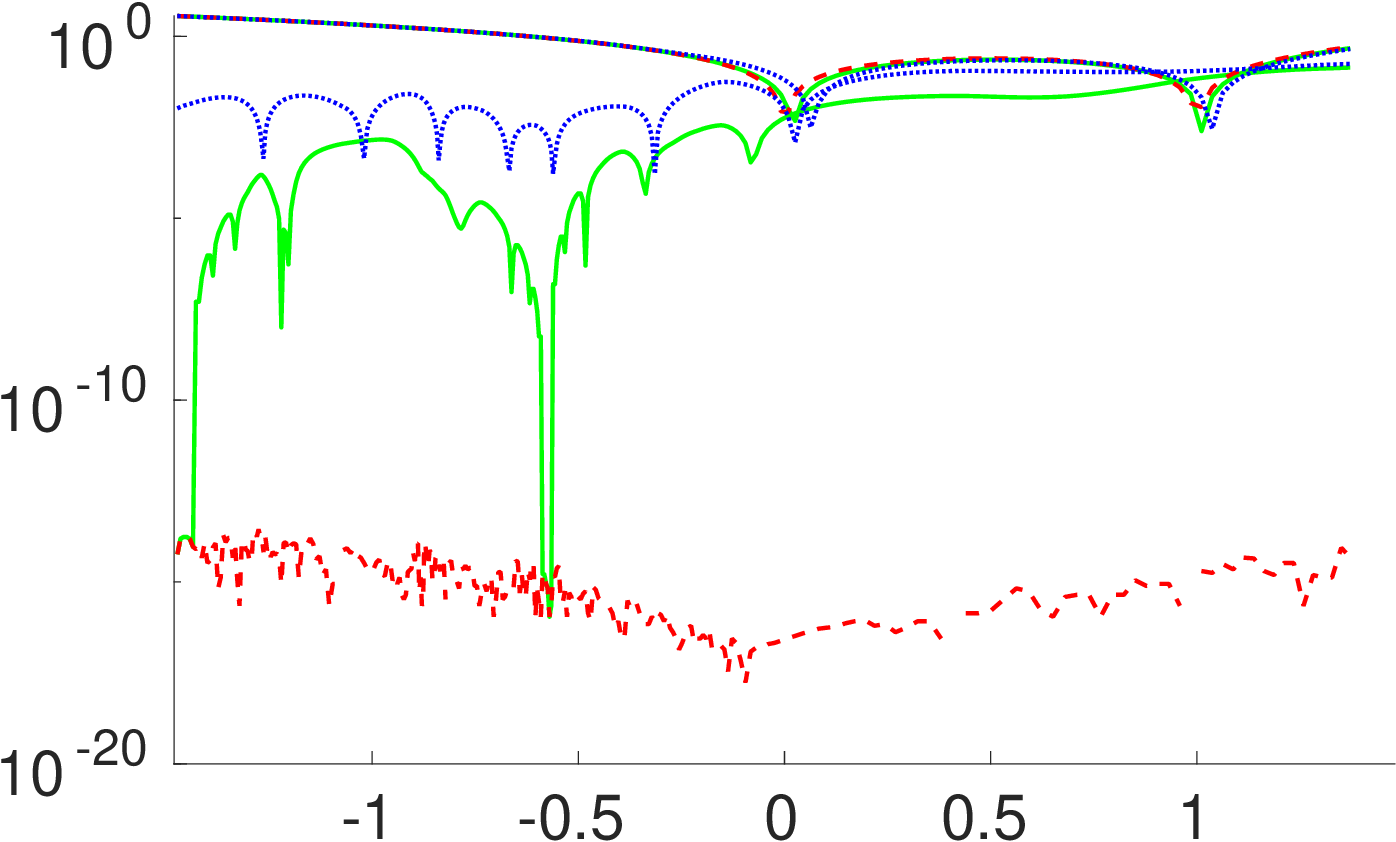}
	\caption[Parabola reproduction]{Left, application of 5 iterations of the scheme with $\rho\in\{0,2,6\}$ (dotted blue, solid green and dashed red, respectively) to parabolic non-uniform data (black dots). Right, the reproduction error in log-scale.}
	\label{fig:parabola}
\end{figure}

We start by confirming the scheme ability to reproduce second degree polynomials. To show this, we choose a function with a commonly recognized graph: $F(x) = (x, x^2)$. It is worth noting that this experiment is applicable to any $F\in\Pi_2^n$, where $n\geq 2$, yielding comparable outcomes.

We randomly generated a non-uniform grid $\xi^0 = (\xi^0_i)_{i=1}^{15}$ within the interval $[-2, 2]$,
\begin{align*}
\xi^{0}= (&-1.9381,   -1.7893,   -1.5751,   -1.3313,   -1.2075,   -0.9235,   -0.8321,\\
  &-0.6420,   -0.5104,   -0.2734,   -0.0412,    0.9514,    1.6813,    1.8065,    1.9363),
\end{align*}
for which
$$\alpha_{11}^{0} = 0.2339\leq\alpha_{i}^{0} = \frac{\nabla \xi_{i-1}}{\nabla \xi_i}\leq \alpha_{13}^{0} = 5.8299, \quad \beta_{12}^{0} = 0.1715\leq\beta_{i}^{0} = \frac{\nabla \xi_{i+1}}{\nabla \xi_i}\leq \beta_{10}^{0} = 4.2748, \, \forall i=2,\ldots,13.$$
According to Theorem \ref{teo:scheme_reproduction}, $\rho \geq 4.8299$ must be chosen to achieve an exact reproduction of the parabola.

In Figure \ref{fig:parabola}-left, we present the results of applying the scheme with $\rho\in\{0,2,6\}$, each for 5 iterations. The boundaries of the interval $[-2, 2]$ are not displayed because the data `shrinks' after each iteration, a typical behaviour in subdivision. The scheme with $\rho = 0$ generates a bump near the vertex of the parabola. The curve obtained with $\rho = 2$ is nearer to the parabola and no bumps appear. Only for $\rho = 6$ is an exact reproduction achieved up to machine precision.

On the right side of the figure, the errors in the reproduction, computed as $F|_{\xi^{5}} - S_{5}(F|_{\xi^{0}})$, are displayed on a logarithmic scale. In some small regions (near $x=1$, for instance), the error is precisely 0, which cannot be visualized on a log-scale, producing some gaps in the graphic.

\subsection{2D curve example}

\begin{figure}[h]
	\centering
	\begin{tabular}{cc}
	\includegraphics[clip,height = 0.25\textheight]{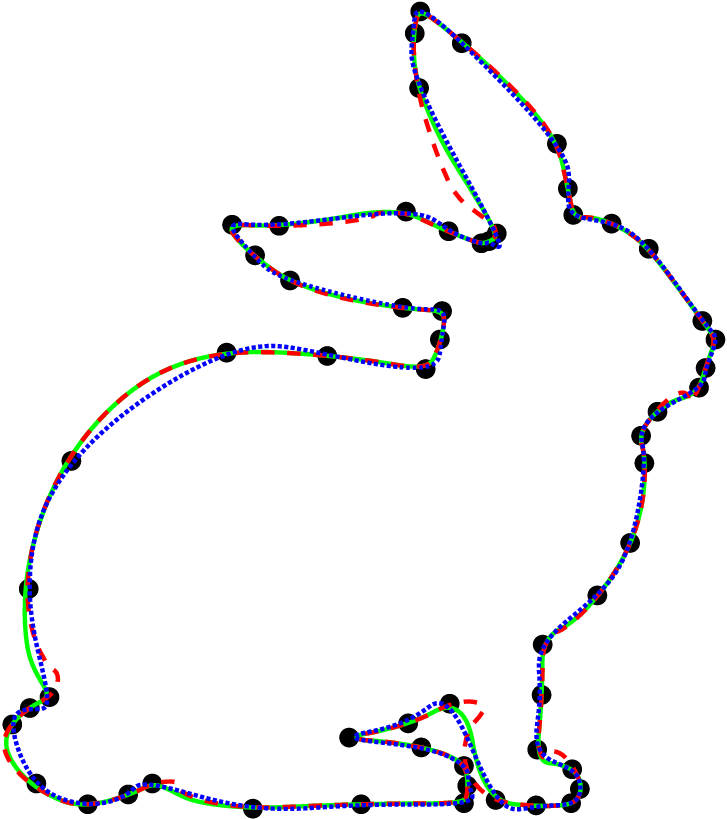}&
	\includegraphics[clip,height = 0.25\textheight]{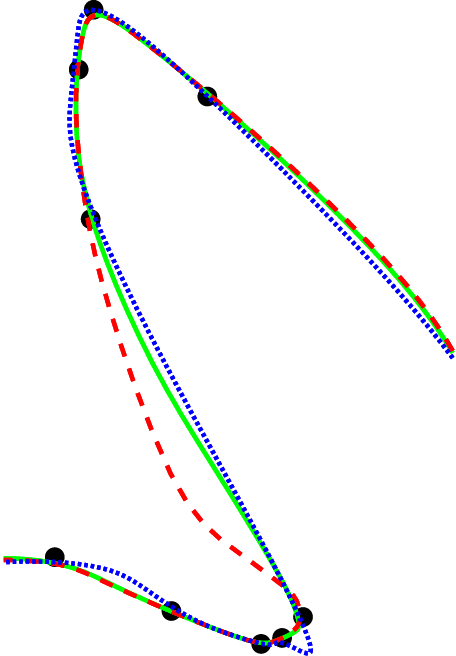}\\
	\includegraphics[clip,height = 0.25\textheight]{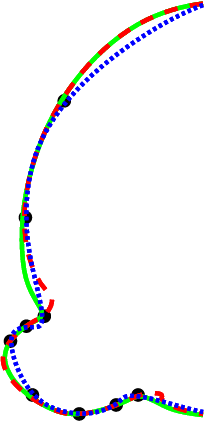}&
	\includegraphics[clip,height = 0.25\textheight]{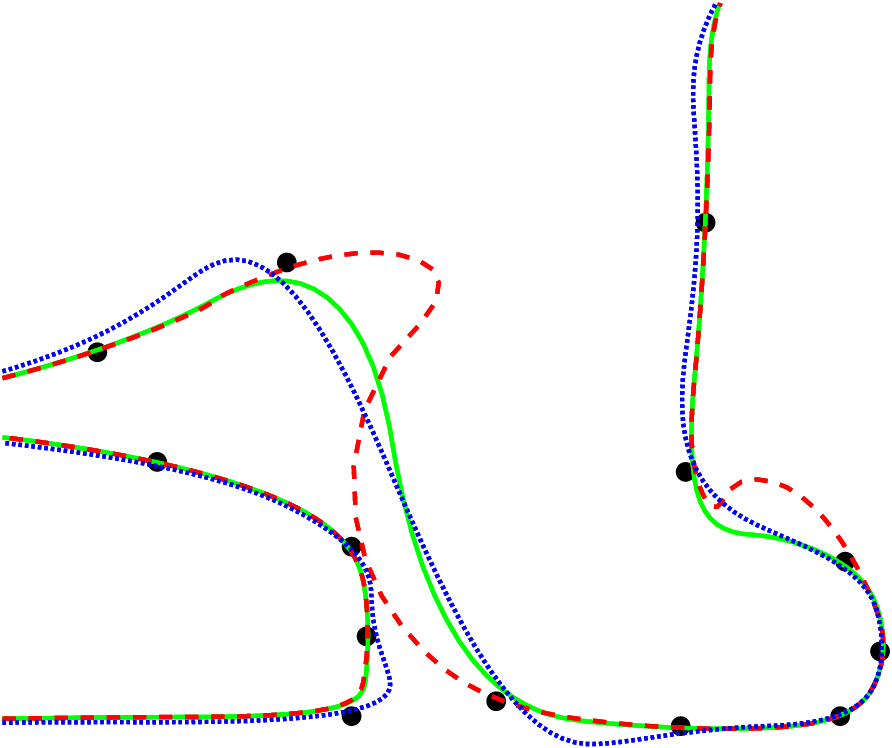}
	\end{tabular}
	\caption[Example of 2D curve]{Example of 2D curves describing a rabbit generated by the presented scheme with $\rho\in\{0,2,6\}$ (dotted blue, solid green and dashed red, respectively) generated from initial data with varying spacing (black dots).
	In the top-left figure, the entire curves can be seen, while the rest of figures are zooms of interesting regions (ear, tail and paw).}
\label{fig:rabbit}
\end{figure}

Next, we present the shapes generated by the scheme when a specific drawing is desired to be created. We have chosen a set of points that outline a rabbit. We applied the scheme for 5 iterations with $\rho\in\{0,2,6\}$. The results can be examined in Figure \ref{fig:rabbit}. Note that the spacing between points varies along the curve, as there are regions that require more detail, while others do not. It motivates the use of a non-uniform-based scheme, as ours, which implicitly examines the distances between the points and suggests the grid spacings for selecting a Lagrange scheme from Definition \ref{defi:lagrange}.

\begin{figure}
	\centering
	\includegraphics[width=0.4\linewidth]{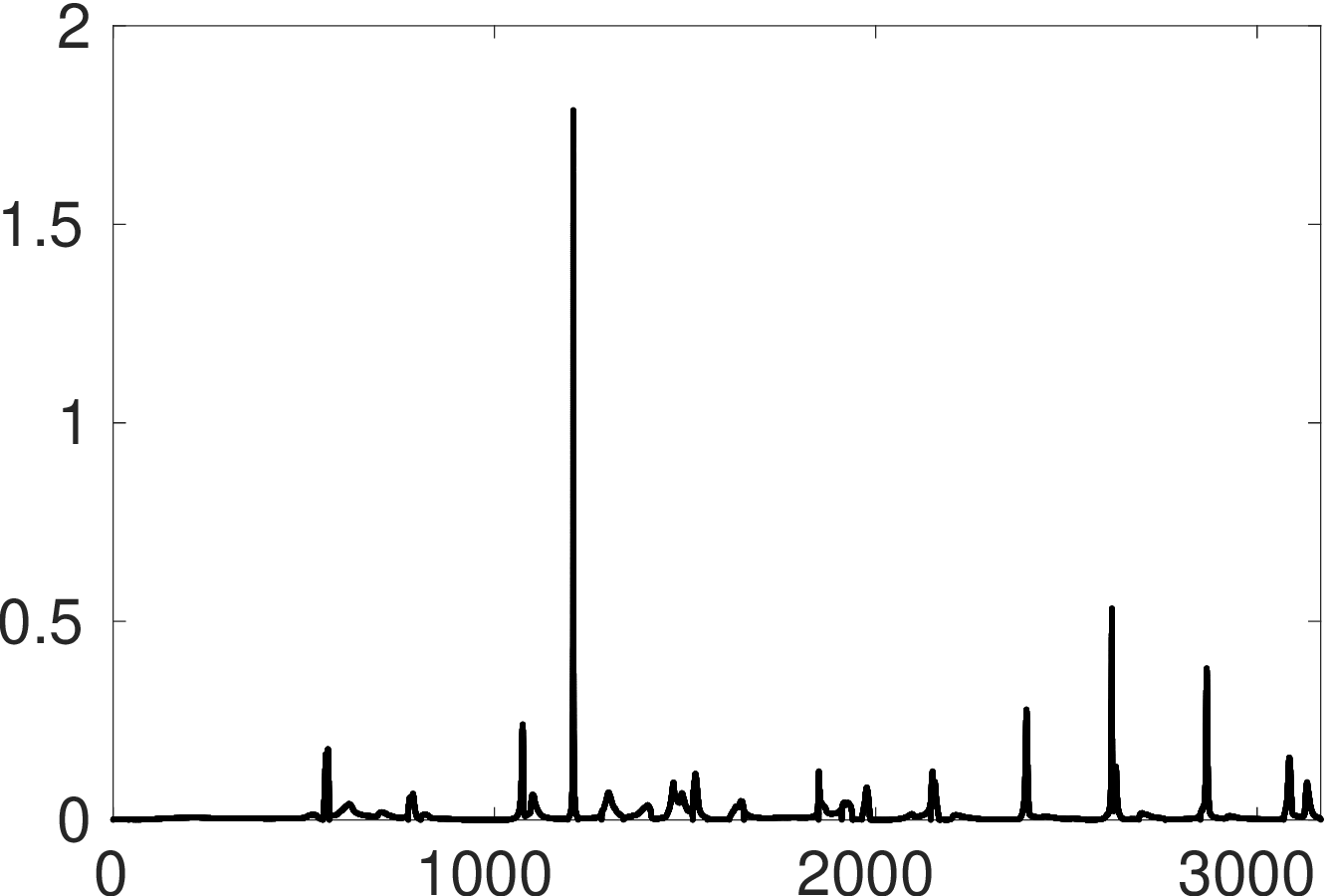}
	\includegraphics[width=0.418\linewidth]{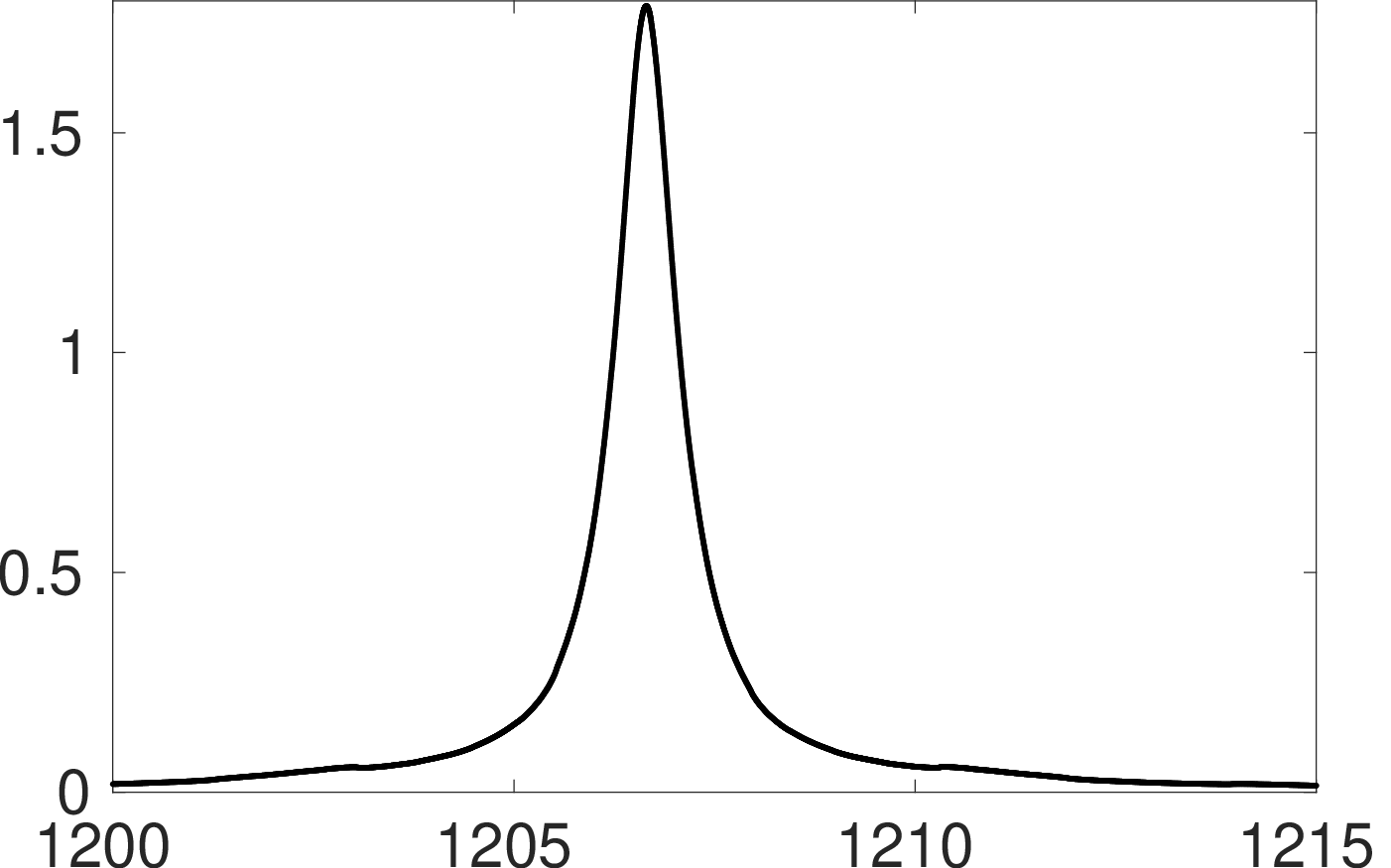}
	\caption{The cumulative arc length versus the curvature of the curve generated by the scheme with $\rho=2$ in Figure \ref{fig:rabbit} is displayed in the left side. A zoom over the highest peak is presented in the right side. The curvature is estimated using the discrete set of points generated by the scheme after 12 iterations, and the curvature estimation at each generated point is marked with a black dot. The numerous dots and the continuity of the curvature along the rabbit curve create the appearance of a continuous function, as evident in the display.}
	\label{fig:rabbitcurvature}
\end{figure}

Note that for $\rho = 0$, the generated curve displays noticeable oscillations in areas such as the ear and paw. Conversely, in other regions like the tail and back, it appears somewhat `stiff'. This highlights the limited adaptability of the linear scheme.
In contrast, the non-linear scheme ($\rho>0$) appears to be more adaptive and flexible. A comparison of the results for $\rho = 2$ and $\rho = 6$ clearly demonstrates that this parameter precisely controls the level of flexibility. It is worth noting the shapes generated for $\rho = 6$ in the ear (which, in our opinion, is also a good representation of an ear), in the tail (which has two undesirable bumps at the beginning and end), and in the paw (where the result is quite undesirable due to an excessive arc length). Good-looking results are obtained for $\rho = 2$.

We computed the curvature along the rabbit curve and determined that, for $\rho\in\{0,2,6\}$, it exhibits continuity. Figure \ref{fig:rabbitcurvature} specifically depicts the case for $\rho=2$. It is worth noting that, despite the presence of peaks where the curvature achieves local maxima, a closer examination reveals that these peaks are also smooth.

\subsection{3D curve example}

\begin{figure}[h]
	\centering
	\includegraphics[width=0.3\linewidth,clip,trim = {70pt 30pt 60pt 20pt}]{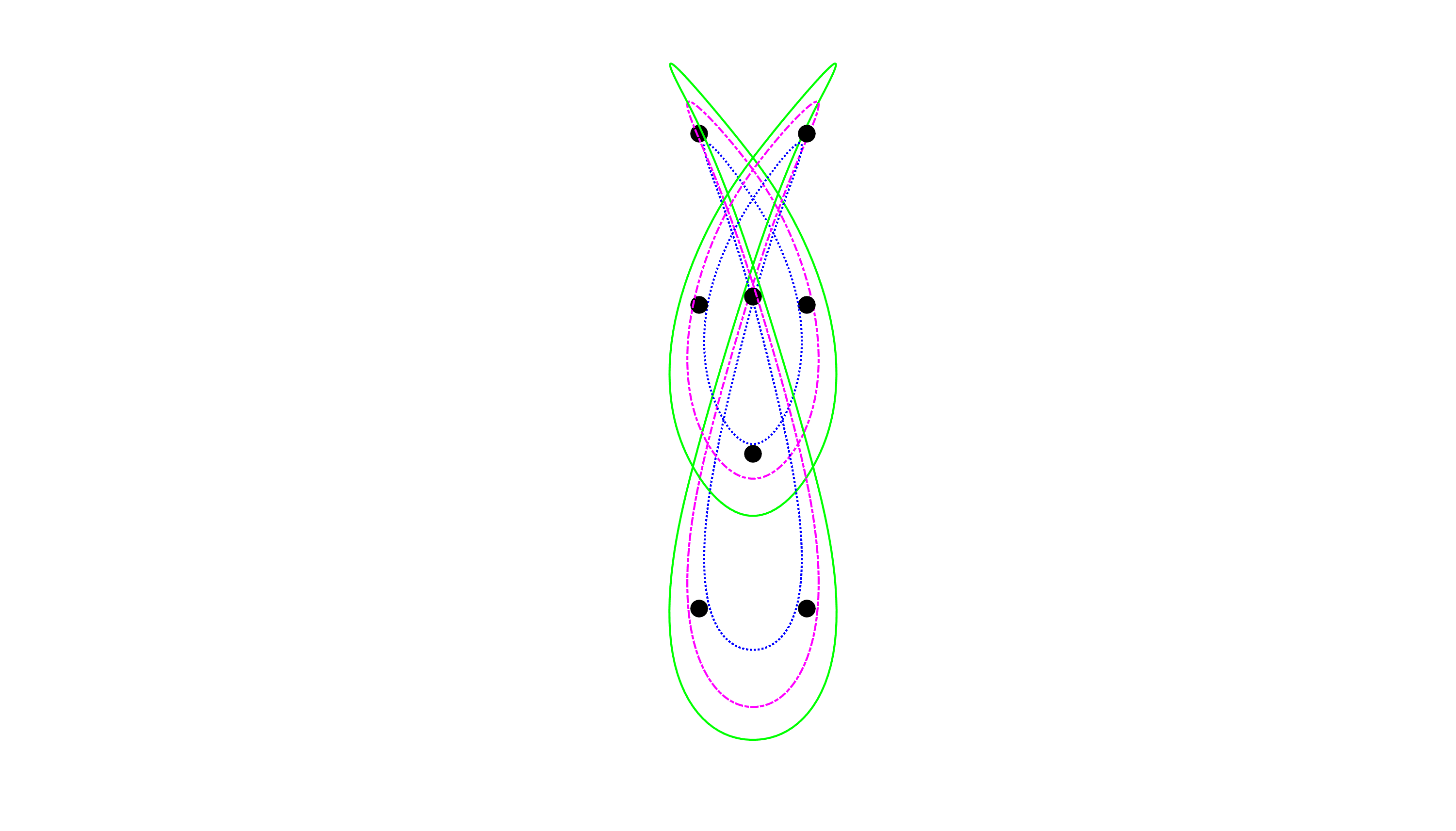}
	\includegraphics[width=0.3\linewidth,clip,trim = {70pt 30pt 60pt 20pt}]{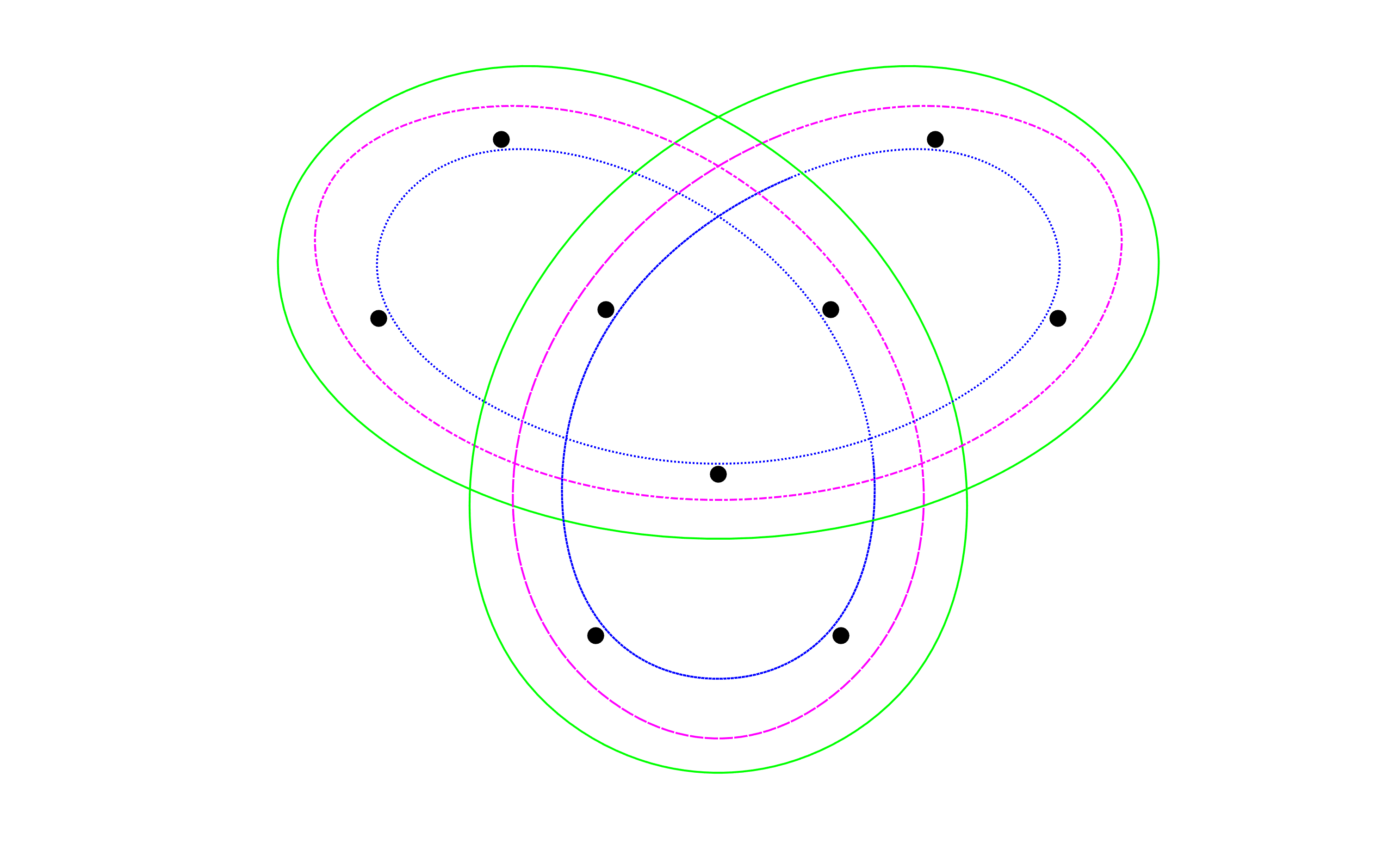}
	\includegraphics[width=0.3\linewidth,clip,trim = {70pt 30pt 60pt 20pt}]{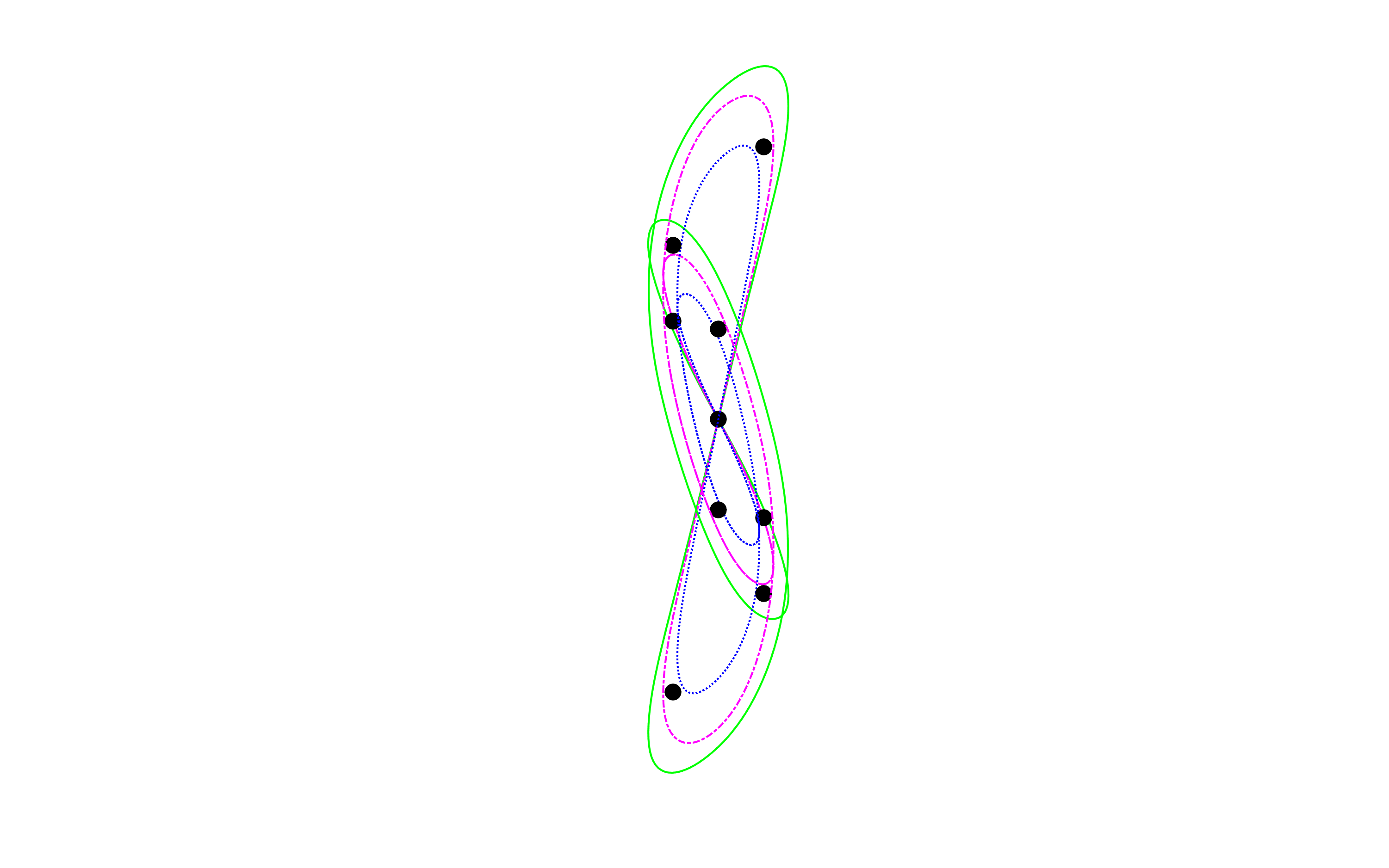}\\[10pt]
	\includegraphics[width=0.3\linewidth,clip,trim = {50pt 30pt 40pt 20pt}]{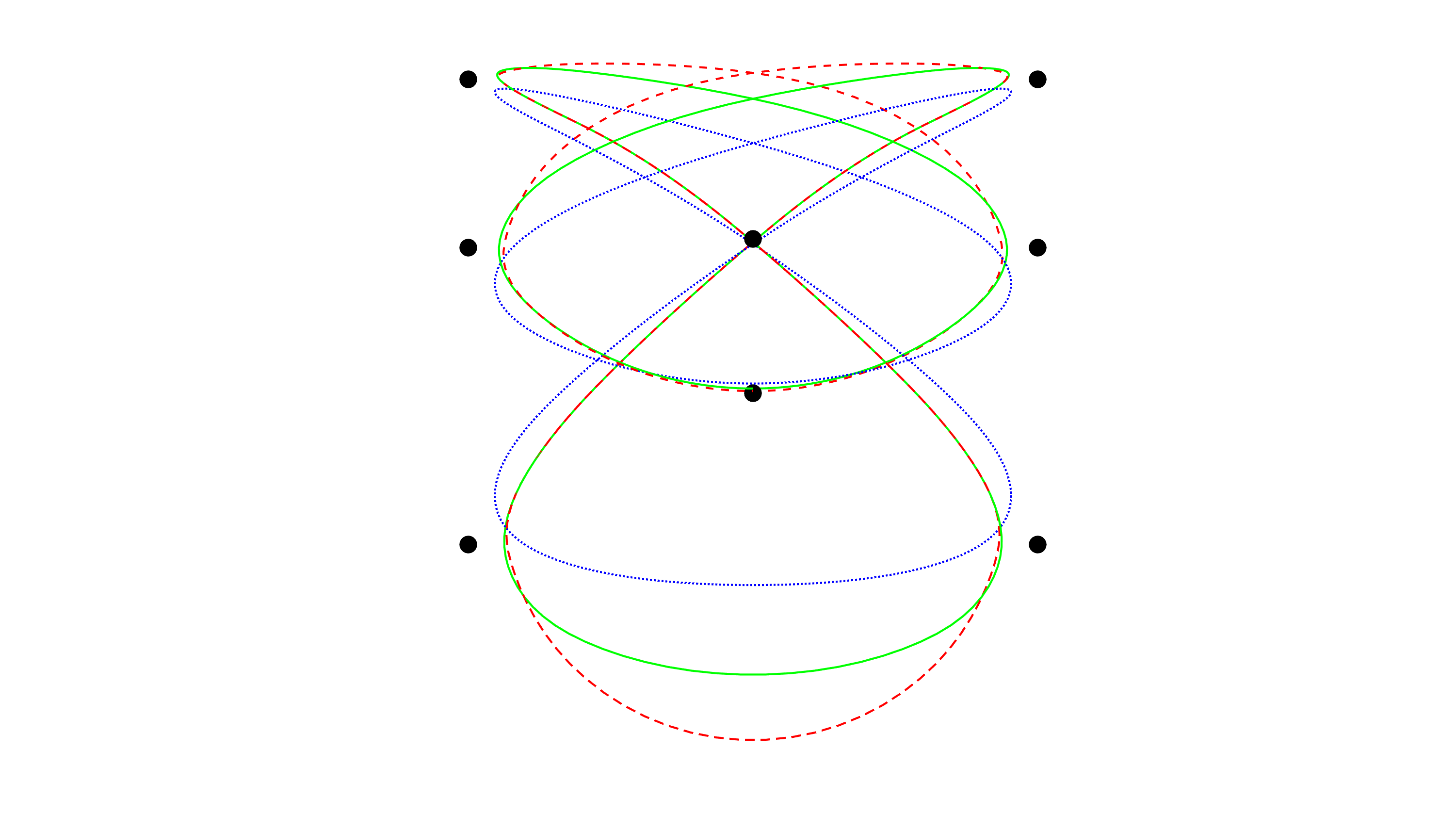}
	\includegraphics[width=0.3\linewidth,clip,trim = {50pt 30pt 40pt 20pt}]{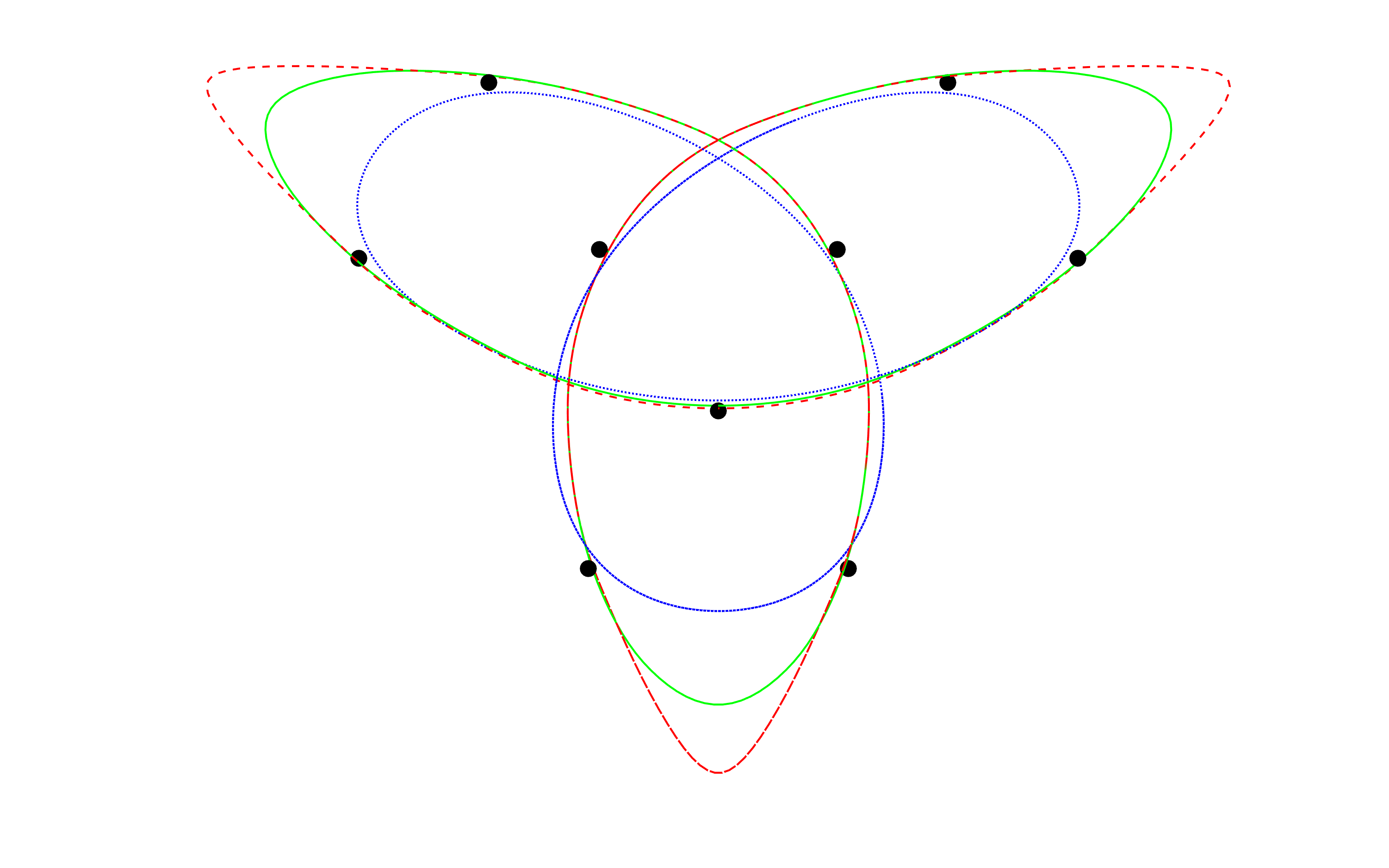}
	\includegraphics[width=0.3\linewidth,clip,trim = {50pt 30pt 40pt 20pt}]{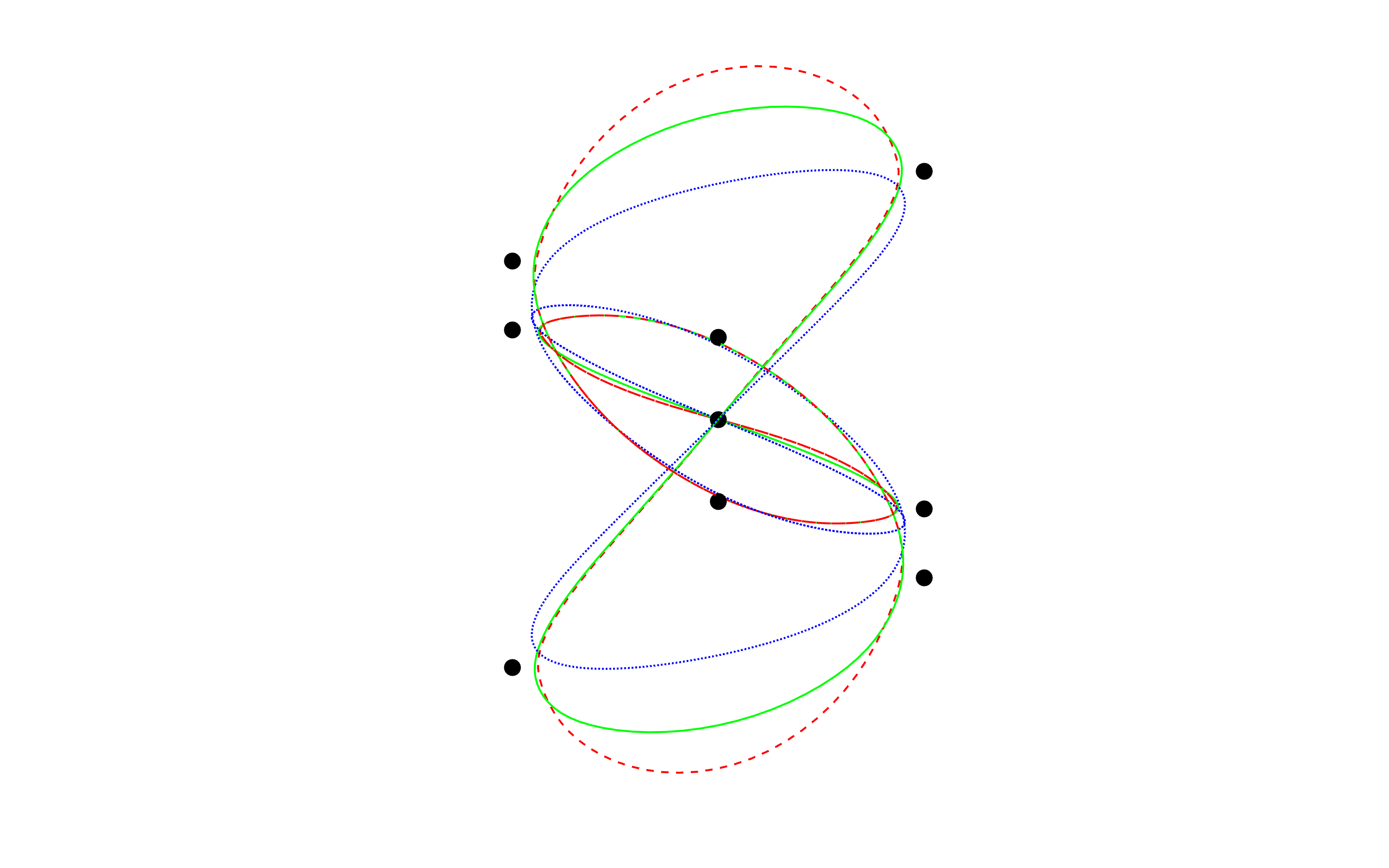}
	\caption[Example of 3D curve]{Example of 3D curves describing trefoil knots, taking $v=0$ (top row) and $v=\pi$ (bottom row) in \eqref{eq:trefoil}, generated by the presented scheme with $\rho\in\{0,1,2\}$ and with $\rho\in\{0,2,6\}$, respectively. The line aspect for each $\rho$ value is: $\rho = 0$, dotted blue; $\rho = 1$, dash-dotted magenta; $\rho = 2$, solid green; and $\rho = 6$, dashed red. We consider the front, lateral and top views of these 3D curves (left, central and right columns, respectively).}
	\label{fig:trefoil}
\end{figure}

Now we proceed with a 3D curve example. Let us consider a trefoil knot surface described by the parametric equations
\begin{equation} \label{eq:trefoil}
\begin{split}
x &= r \frac{\sin(3 u)}{2 + \cos(v)},\\
y &= r \frac{\sin(u) + 2 \sin(2 u)}{2 + \cos\left(v + \frac{2\pi}{3}\right)},\\
z &= \frac{r}8 (\cos(u) - 2 \cos(2 u)) (2 + \cos(v)) \left(2 + \cos\left(v + \frac{2 \pi}{3}\right)\right),
\end{split}
\end{equation}
with $u,v\in[0,2\pi]$, $r>0$. We look at the two coordinate lines obtained with $v=0$ and $v = \pi$, $r=1$, and use the points given by $u=2\pi i/9$, $i\in\Z$, as initial data. We apply the scheme for 5 iterations with $\rho\in\{0,1,2\}$ for $v=0$ (top row) and $\rho\in\{0,2,6\}$ for $v=\pi$ (bottom row). We show in Figure \ref{fig:trefoil} the resulting 3D curves from front, lateral, and top views. We can see that the curves are smooth and the parameter $\rho$ governs the level of flexibility.

\section{Conclusions} \label{sec:conclusions}

In this study, we introduced a uniform non-linear non-stationary subdivision scheme for generating curves in $\R^n$, where $n\geq2$. It relies on annihilation operators to automatically estimate grid spacing and select an appropriate Lagrange-type scheme. As a result, a uniform scheme able to reproduce second-degree polynomials on a variety of non-uniform grids was obtained. Please refer to the Reproducibility Section for access to a MATLAB implementation.

The scheme is dependent on a positive parameter, which allows for control over both the level of flexibility and the variety of non-uniform grids on which second-degree polynomials are reproduced. We conducted numerical experiments to validate these findings, also revealing that the scheme is capable of generating visually appealing curves, that fits the data better than its analogous linear scheme, studied by \cite{DFH04}. Despite the subdivision scheme was defined from a very restricted property (applicable only to second-degree polynomial data), it produces visually appealing results even for general data.

The scheme has been defined in a non-stationary manner, which facilitated the proof of its $\cC^1$ convergence through similarities with a well-behaved subdivision scheme. We proposed a new class of subdivision schemes, to which our scheme belongs, and proved their convergence, and even $\cC^1$ continuity under a condition that is easily met. We established convergence in two distinct ways. Firstly, by leveraging the properties of quasilinear and asymptotically equivalent schemes. Secondly, by adapting a result from non-linear stationary subdivision theory to our non-stationary context.

For future research, we propose several avenues. Firstly, the current scheme exhibited stable behaviour during our numerical testing, meaning that slight modifications of the data induced small variations on the generated curve. However, it is worth noting that the piecewise definition of its rules is not continuous, and thus, stability is not guaranteed.
This scheme could be modified to ensure that its rules are Lipschitz functions, which could result in a stable scheme.
We believe this is achievable, although it might necessitate sacrificing certain properties, such as limiting its reproduction capability.

The simultaneous reproduction of parabolas on many non-uniform grid could be extended to higher-degree polynomials or to exponential polynomials. Another challenging task would be the extension of this work to the multivariate context.

The regularity was constrained by the regularity of the implicit mesh refinement, which was based on Chaikin's scheme. The adoption of a smoother and strictly monotonicity preserving scheme may be considered.

While the selection of the parameter $\rho$ is straightforward in practice to achieve good-looking results, finding its optimal value (in a certain sense) could be interesting. Rather than considering a constant parameter $\rho$, it could be defined as a function of the data, denoted as $\rho(f_{i-p},\ldots,f_{i+p+1})$. This would result in a parameter-less scheme that could potentially offer, for instance, improved adaptability or reduced distance between the generated curve and the data.

	In some CAD applications, the strict requirement of exact polynomial reproduction may be unnecessary, as the human eye often cannot perceive small approximation errors. This motivates the development of a simpler scheme that approximates the proposed scheme, potentially offering an efficient method for generating visually appealing curves.

	The proposed subdivision scheme is non-interpolatory, meaning that the resulting curve may not pass through the initial data points. Since this may be undesirable in some applications, a useful extension of this work would be the development of an interpolatory uniform scheme capable of reproducing non-uniform polynomial data.



Finally, we demonstrated $\cC^1$ regularity and also argued that it is not $\cC^2$. Nevertheless, the numerical experiments suggest that it could be curvature continuous. A comprehensive analysis of this topic could be conducted to reveal the conditions under which the generated curve is curvature continuous.

\section*{Acknowledgments}
This research has been supported by project CIAICO/2021/227 (funded by Conselleria de Innovación, Universidades, Ciencia y Sociedad digital, Generalitat Valenciana) and by grant PID2020-117211GB-I00 (funded by MCIN/AEI/10.13039/501100011033).

\section*{Reproducibility}

The MATLAB codes used for the numerical results in this paper, as well as a Wolfram Mathematica notebook file containing symbolic computations that complement the proofs presented in the manuscript, are available on Github: \url{https://github.com/serlou/uniform4nonuniform-subdivision}.

\section*{Declaration of Generative AI and AI-assisted technologies in the writing process}

During the preparation of this work the author used ChatGPT in order to improve readability and language. After using this service, the author reviewed and edited the content as needed and take full responsibility for the content of the publication.

\bibliography{biblio}

\end{document}